\documentclass[a4paper,11pt]{article}

\usepackage[
paperwidth=22cm,
paperheight=31cm,
body={18true cm, 27true cm},
headheight=3cm
]{geometry}
\usepackage{fancyhdr}
\usepackage{tocloft}
\usepackage[english]{babel}

\usepackage{amsmath,amssymb,amsfonts,amsthm}
\usepackage{bm}
\usepackage{mathtools}
\usepackage{cases}

\usepackage{graphicx}
\usepackage{subcaption}

\usepackage[x11names]{xcolor}

\usepackage{pgfplots}
\pgfplotsset{compat=1.18}
\usepackage{tikz}
\usetikzlibrary{arrows.meta,positioning,decorations.markings,shapes.geometric,fit}
\usetikzlibrary{calc,arrows.meta,decorations.markings}
\usepackage[colorlinks=true,    
linkcolor=blue, 
citecolor=red,  
urlcolor=magenta]{hyperref} 
\usepackage{xeCJK}

\usepackage{newtxtext}
\usepackage{mathrsfs}

\numberwithin{equation}{section}
\numberwithin{figure}{section}

\theoremstyle{plain}
\newtheorem{theorem}{Theorem}[section]
\newtheorem{lemma}[theorem]{Lemma}
\newtheorem{prop}[theorem]{Proposition}

\newtheorem{assumption}[theorem]{Assumption}

\theoremstyle{definition}
\newtheorem{remark}{Remark}[section]
\newtheorem{problem}{Riemann-Hilbert Problem}
\newtheorem{basicproblem}{Basic Riemann--Hilbert Problem}
\newtheorem{parproblem}{$\bar{\partial}$-problem}

\makeatletter

\newcommand{\Rmnum}[1]{\expandafter\@slowromancap\romannumeral #1@}
\makeatother

\definecolor{cycle-a}{HTML}{1F77B4}
\definecolor{cycle-b}{HTML}{FF7F0E}
\definecolor{PlaneWave}{HTML}{A8DADC}
\definecolor{SlowDecay}{HTML}{F1FAEE}
\definecolor{SlowDecay1}{HTML}{386641}
\definecolor{GridColor}{HTML}{BC4749}

\usepackage{enumitem}

\title{\bf Soliton gas for the derivative nonlinear Schr\"odinger equation: continuum \(\bar\partial\)-problem, genus reduction and asymptotics
}

\author{\large Deng-Shan Wang, Xinyu Wang{\thanks{E-mail
			address for corresponding author:\,\,xnyu.wang@mail.bnu.edu.cn}}
	\\
	\\{\large School of Mathematical Sciences, Beijing Normal University, Beijing 100875, China}\\
}

\date{}

\begin{document}
	\maketitle
	
	\fancyhead{} 
	\fancyfoot[C]{\thepage} 
	\pagestyle{fancy}  
	\renewcommand{\headrulewidth}{0pt}

	\begin{abstract}
		A soliton gas theory for the derivative nonlinear Schr\"odinger equation is developed, focusing on the Gerdjikov--Ivanov equation and using its gauge equivalence with the Kaup--Newell and Chen--Lee--Liu equations. Starting from the reflectionless inverse scattering problem, we formulate pure \(N\)-soliton solutions through a meromorphic Riemann--Hilbert problem and pass to the continuum limit as the discrete spectrum condenses on the planar spectral domains. Under a suitable scaling of the norming constants, the limit yields a compactly supported \(\bar\partial\)-problem. For domains admitting a Schwarz function, in particular elliptic domains, this problem reduces to a Riemann--Hilbert problem on the associated mother body. For the elliptic soliton gas, the large-\(x\) and long-time asymptotic behaviors are established: the solution decays as \(x\to+\infty\), approaches an elliptic finite-gap background as \(x\to-\infty\), and exhibits stratified long-time sectors described by one-, two-, and three-phase Riemann theta functions. A distinctive feature of derivative nonlinear Schr\"odinger equation is the \(z\mapsto -z\) symmetry, which induces a quotient reduction of the effective Abelian geometry via \(\lambda=z^2\). We further derive a kinetic equation for the effective velocity of the elliptic soliton gas and an Its–Izergin–Korepin–Slavnov type Fredholm determinant representation of the continuum \(\tau\)-function.
	\end{abstract}
	
	
	\setcounter{tocdepth}{1}
	\setlength{\cftbeforesecskip}{1pt}
	\tableofcontents

	\section{Introduction}
	The derivative nonlinear Schr\"odinger (DNLS) equation
	\begin{equation}\label{dnls1}
		i q_t+q_{xx}+i(|q|^2q)_x=0,
		\tag{DNLS-\MakeUppercase{\romannumeral1}}
	\end{equation}
	is one of the fundamental integrable models in the theory of nonlinear dispersive waves. It arises, for instance, in magnetohydrodynamics with the Hall effect and in the description of weakly nonlinear, long-wavelength, circularly polarized Alfv\'en waves propagating along a background magnetic field in a magnetized plasma \cite{Mjolhus-1974}-\cite{Ander}; it also appears in nonlinear optics through models involving self-steepening effects. The complete integrability of \eqref{dnls1} was established by Kaup and Newell \cite{Kaup-Newell}, who introduced its Lax pair and developed the associated inverse scattering formalism, thus \eqref{dnls1} is often called the Kaup-Newell equation.
	\par
	Although the DNLS equation is closely related to the standard cubic nonlinear Schr\"odinger (NLS) equation, the derivative nonlinearity introduces substantial changes in both the dynamics and the spectral theory. It destroys the usual Galilean invariance, leads to a quadratic dependence on the spectral parameter in the Lax pair, and produces additional spectral symmetries. These features make DNLS a natural but genuinely different integrable model for studying solitons, finite-gap waves, long-time dynamics, and coherent many-soliton ensembles.
	\par
	There are several gauge equivalent forms of the DNLS equation. Besides the Kaup--Newell equation \eqref{dnls1}, two important representatives are the Chen--Lee--Liu equation \cite{Chen} 
	\begin{equation}\label{dnls2}
		iQ_t+Q_{xx}+i|Q|^2Q_x=0,
		\tag{DNLS-\MakeUppercase{\romannumeral2}}
	\end{equation}
	and the Gerdjikov--Ivanov equation \cite{GI}
	\begin{equation}\label{dnls3}
		iu_t+u_{xx}-iu^2\bar u_x+\frac12|u|^4u=0.
		\tag{DNLS-\MakeUppercase{\romannumeral3}}
	\end{equation}
	They are related to the Kaup--Newell equation \eqref{dnls1} by the nonlocal gauge transformations \cite{Malomed-PRA-2007}
	\[
	Q(x,t)=q(x,t)\exp\!\left(\frac{i}{2}\int_{-\infty}^{x}|q(y,t)|^2\,dy\right),
	\qquad
	u(x,t)=q(x,t)\exp\!\left(i\int_{-\infty}^{x}|q(y,t)|^2\,dy\right).
	\]
	Thus the three equations share the same density,
	\[
	|q(x,t)|^2=|Q(x,t)|^2=|u(x,t)|^2,
	\]
	and their inverse scattering problems are closely connected. In this paper, we work mainly with the Gerdjikov--Ivanov form \eqref{dnls3}. This choice is technically convenient because its Riemann--Hilbert formulation has a symmetric residue structure and is well adapted to the spectral symmetries needed in the construction of the continuum soliton gas limit. The corresponding results for the Kaup--Newell and Chen--Lee--Liu equations can then be recovered through these gauge transformations; see Appendix~\ref{app:gauge-equivalence} for the precise gauge equivalence relations among the three equations at the level of Lax pairs and Riemann--Hilbert problems.

	\subsection{Prior work and motivation}
	
	Over the past several decades, the derivative nonlinear Schr\"odinger equation has been studied from several complementary perspectives,
	including inverse scattering transform \cite{XJChen-PRE-2004,Zhang-Yan-JNS-2020}, well-posedness \cite{Inven}-\cite{Liu} and long-time asymptotics \cite{Liu2,Liu3}, and so on.
	A fundamental distinction between DNLS and NLS is the derivative nonlinearity. For example, the
	Kaup--Newell equation \eqref{dnls1} contains the term
	\((|q|^2q)_x\), whereas the cubic NLS equation contains the
	derivative-free term \(|q|^2q\). This difference affects both the
	analytic theory and the spectral and asymptotic structures of the
	corresponding nonlinear waves.
	\par
	A first major line of research concerns the integrable and analytical
	foundations of the DNLS equation. Kaup and Newell
	\cite{Kaup-Newell} established its complete integrability by deriving the Lax pair, conservation laws, and soliton solutions. Subsequent works developed the associated inverse scattering theory through Jost solutions, scattering data, and Riemann--Hilbert formulations
	\cite{XJChen-PRE-2004,Zhang-Yan-JNS-2020}. In parallel, substantial progress has been made in the well-posedness theory. Global existence and scattering results have been obtained in weighted Sobolev spaces by
	inverse scattering methods \cite{Liu}-\cite{Peli2}, while soliton resolution was established in \cite{Liu3}. More recently, global well-posedness has been proved at the critical regularities \(H^{1/2}(\mathbb R)\) and \(L^2(\mathbb R)\) \cite{Kill-Visan}. These results provide a robust analytical framework for the study of DNLS dynamics.
	
	A second line of research concerns special solutions and their
	large-scale behavior. In addition to soliton and multi-soliton
	solutions, the DNLS equation
	admits finite-gap and hyperelliptic solutions
	\cite{Zhao,Wright}, as well as rich long-time asymptotic regimes
	\cite{Xu}. Related developments in Whitham
	modulation theory and dispersive hydrodynamics have produced periodic solutions, modulation systems, and dispersive shock wave descriptions for the gauge equivalent DNLS equations \cite{Ivanov-2017,Ivanov-2020,G-W}. These studies show that DNLS supports not only isolated solitons, but also collective finite-gap and modulated wave fields.
	
	The notion of soliton gas originates from Zakharov's kinetic description of a rarefied KdV soliton gas \cite{Zakh}. In this picture, the propagation of a trial soliton is affected by the cumulative phase shifts generated by its interactions with the surrounding solitons. This viewpoint was later extended to dense soliton gases by El and collaborators \cite{El1,El2,El3}, leading to self-consistent kinetic equations for the spectral density and the effective soliton velocity. More recently, soliton gases and breather gases have been studied extensively for the KdV equation \cite{DZZ}-\cite{Wang-SIGMA-2026}, the NLS equation \cite{Ph.D}-\cite{TRZZ2}, and the mKdV equation \cite{Grava-R,Zhang-Ling}. A rigorous Riemann--Hilbert approach has also emerged, in which soliton gases are obtained as continuum limits of pure \(N\)-soliton solutions and their macroscopic behavior is described by finite-gap backgrounds, Whitham modulation and kinetic velocity laws \cite{Grave-R,Grava-R}.
	
	Compared with the KdV, NLS, and mKdV equations, the DNLS soliton gas remains
	much less understood. This is significant both physically and
	mathematically. DNLS models describe nonlinear wave regimes in which
	self-steepening or Hall-type corrections cannot be neglected. At the
	same time, the even dependence of the spectral phase on the spectral
	parameter, the symmetry \(z\mapsto -z\), and the nonlocal gauge
	transformations among the DNLS equations lead to an inverse problem and
	a finite-gap geometry that differ essentially from those of the
	standard NLS equation.
	
	Consequently, the DNLS soliton gas cannot be obtained by a direct transfer of the existing NLS theory. Several basic questions arise: how should the infinite soliton limit be formulated as a continuum inverse problem; which finite-gap state is selected by spectral condensation; how does the resulting gas behave in the large-\(x\) and long-time regimes; what kinetic equation governs its effective
	velocity; and how is the continuum solution encoded by Fredholm determinants and \(\tau\)-functions?
	
	The purpose of this paper is to address these questions for the Gerdjikov--Ivanov equation \eqref{dnls3}. Starting from the reflectionless RHP for pure \(N\)-soliton solutions, we construct a continuum \(\bar\partial\)-problem describing the DNLS soliton gas. For admissible condensation domains, and in particular for elliptic domains admitting a Schwarz function description, the continuum problem reduces to a RHP supported on the corresponding mother body. This reduction reveals the finite-gap structure selected by spectral condensation and provides the starting point for the large-space, long-time, kinetic theory, and the Fredholm determinant analysis developed below.

	\subsection{Main results}
	
	Throughout this paper we work with the Gerdjikov--Ivanov equation \eqref{dnls3}. The corresponding results for the Kaup--Newell and Chen--Lee--Liu equations follow from the gauge equivalence described in Appendix~\ref{app:gauge-equivalence}.
	\par
	{\bf Our first result is the construction of a continuum inverse problem for the elliptic DNLS soliton gas.} Starting from the reflectionless Riemann--Hilbert problem (RHP) for pure \(N\)-soliton solutions, we let the discrete eigenvalues condense, as \(N\to\infty\), on a bounded domain \(\mathcal D_+\) in the first quadrant of the spectral plane, together with its images generated by the DNLS spectral symmetries. Under the scaling of the norming constants specified in Assumption~\ref{ass:spectral-condensation}, the meromorphic RHP converges to a compactly supported \(\bar\partial\)-problem, denoted by \(\bar\partial\)-problem~\ref{prob:dbar}. This problem provides the continuum spectral description of the DNLS soliton gas, expressed by the	 reconstruction formula  (\ref{eq:elliptic-reconstruction}) and RHP \ref{prob:elliptic-rhp} below.
	\par
	For reference, the free velocity of a single DNLS soliton with spectral parameter \(z=\eta+i\omega\) is
	\begin{equation}\label{eq:free-soliton-velocity}
		v_{\rm sol}
		=
		-4(\eta^2-\omega^2)
		=
		-4\operatorname{Re}(z^2).
	\end{equation}
	This velocity is the microscopic input that reappears in the kinetic description of the soliton gas.
	
	A further reduction is available when the boundary of \(\mathcal D_+\) admits a Schwarz function. In this case, the area integrals in the \(\bar\partial\)-problem can be reduced to contour integrals supported on the mother body of \(\mathcal D_+\). Consequently, the continuum inverse problem is transformed into the RHP. In the elliptic case considered in Section~\ref{subsec:elliptic-model}, the mother body is the segment joining the two foci of the ellipse. This reduction makes the finite-gap structure selected by the spectral condensation explicit and provides the starting point for the asymptotic analysis.
	\par
	Our second rigorous asymptotic result concerns the large-\(x\) and long-time behaviors of the elliptic DNLS soliton gas.
	
	\begin{theorem}[Large-\(x\) asymptotics]\label{spatial}
		The initial soliton gas potential \(u(x,0)\) of the Gerdjikov--Ivanov equation \eqref{dnls3} satisfies the large-\(x\) asymptotics of the form
		\[
		u(x,0)=
		\begin{cases}
			\mathcal O(e^{-c_r x}), & x\to+\infty, \\[4pt]
			\mathcal U_-(x)+\mathcal O(|x|^{-1}), & x\to-\infty,
		\end{cases}
		\qquad c_r>0,
		\]
		where 
		\[
		\mathcal U_-(x)
		=
		2i\bigl(\operatorname{Im}E_2-\operatorname{Im}E_1\bigr)
		\frac{
			\Theta(0;\widehat{\mathcal P})
			\Theta(\widehat{\mathcal{A}}_\infty-\Delta(x);\widehat{\mathcal P})
		}{
			\Theta(\Delta(x);\widehat{\mathcal P})
			\Theta(\widehat{\mathcal{A}}_\infty;\widehat{\mathcal P})
		}
		F_\infty^2 e^{2ig_\infty x},
		\]
		and $\widehat{\mathcal P}$, $\widehat{\mathcal{A}}_\infty$, $\Delta(x)$, $g_\infty$, $F_\infty$ are given by \eqref{period-matrix-x}, \eqref{limit-of-abel-map-x}, \eqref{delta-x}, \eqref{g-infty-x} and
		\eqref{F-infty}, respectively.
	\end{theorem}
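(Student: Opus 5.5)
The proof is a Deift--Zhou nonlinear steepest descent analysis of RHP~\ref{prop:elliptic-rhp}, the reduced problem on the mother body, at $t=0$. Before taking limits I would pass to the quotient variable $\lambda=z^2$ and use the $z\mapsto -z$ symmetry. The elliptic gas then has jumps on two segments (the mother-body focal segment $\Sigma_1$ and its conjugate image $\Sigma_2$), with endpoints denoted $E_1,\bar E_1,E_2,\bar E_2$. The exponential factor on these jumps is $e^{\pm 2i\theta}$ with $\theta=\lambda x$, or $\lambda x+2\lambda^2 t$ at $t=0$, and the reconstruction formula \eqref{eq:elliptic-reconstruction} recovers $u$ from the $\lambda^{-1}$ (equivalently $z^{-1}$) coefficient.

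\emph{Case $x\to+\infty$.} I would factor the jump so that, on each segment, the only surviving term carries $e^{-2x\,\mathrm{Im}\,\lambda}$-type decay. Concretely, I would open small lenses, or simply note that the jump matrix is $I+\text{(rank one)}\,e^{\pm2i\lambda x}$ with the exponential decaying on the segment lying in the corresponding half-plane. This gives $\|J-I\|_{L^\infty\cap L^1}=\mathcal O(e^{-c_r x})$, where $c_r=2\min_{\Sigma_1\cup\Sigma_2}|\mathrm{Im}\,\lambda|>0$; the constant is positive because the ellipse lies strictly in the first quadrant, so the mother body stays away from the real axis. Small-norm theory for the identity-normalized problem then gives $M=I+\mathcal O(e^{-c_r x})$ uniformly, and $u(x,0)=\mathcal O(e^{-c_r x})$ follows.

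\emph{Case $x\to-\infty$.} Here the exponentials grow, so I would introduce a $g$-function. Its jumps are $g_++g_-=\text{const}$ on the bands and $g_+-g_-=\Omega$ on the gap between them, and it is normalized by $g(\lambda)=\lambda+g_\infty+\mathcal O(\lambda^{-1})$, which defines $g_\infty$ in \eqref{g-infty-x}. I would build it on the genus-one surface $w^2=\prod(\lambda-E_j)(\lambda-\bar E_j)$, which is the quotient curve; its period matrix is $\widehat{\mathcal P}$ and $\widehat{\mathcal A}_\infty$ is the Abel image of $\infty$. The Schwarz-function densities on the segments would be handled by a scalar Szeg\H{o}-type function $F$, whose value at infinity is $F_\infty$ as in \eqref{F-infty}, so that after conjugation by $e^{ixg\sigma_3}F^{\sigma_3}$ the jumps factor. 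Opening lenses around the bands then makes the off-band jumps exponentially close to $I$ and leaves constant off-diagonal jumps on the bands, together with a phase jump $e^{i\Delta(x)}$ on the gap, where $\Delta(x)$ is linear in $x$ as in \eqref{delta-x}. This model problem is solved by the standard genus-one theta-function construction, $\Theta(\widehat{\mathcal A}(\lambda)\pm\widehat{\mathcal A}_\infty-\Delta)$ combined with the factor $\gamma(\lambda)=((\lambda-E_1)\cdots)^{1/4}$. Expanding it at infinity produces exactly the prefactor $2i(\mathrm{Im}E_2-\mathrm{Im}E_1)$, the theta quotient, and $F_\infty^2e^{2ig_\infty x}$, and the last step is to translate this expansion back to the $z$-variable.

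\emph{Error estimate.} The $\mathcal O(|x|^{-1})$ error comes from local parametrices at the four endpoints. The densities vanish or blow up at the foci like square roots, so the appropriate local model is either Airy-type or built from Bessel or explicit power functions. Matching these to the outer solution gives mismatches of order $|x|^{-1}$ on small circles around the endpoints, and a small-norm argument then yields the stated error.

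The main obstacle I expect is constructing $g$ and $F$ correctly so that the sign chart ($\mathrm{Re}(i(g-\lambda))$ of the right sign on the lens boundaries) holds for the specific elliptic density. A closely related difficulty is verifying that the model problem is solvable, meaning the theta denominator $\Theta(\Delta(x))$ never vanishes; I would establish this by the usual reality and symmetry argument for $\widehat{\mathcal P}$, inherited from the Schwarz reflection symmetry.
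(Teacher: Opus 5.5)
Your $x\to+\infty$ argument is the paper's. On $\mathscr I\cup(-\mathscr I)$, $\operatorname{Im}z^2$ is bounded away from zero with one sign, and it has the opposite sign on the conjugate segments, so small-norm theory gives $\mathcal O(e^{-c_rx})$. For $x\to-\infty$ your architecture also matches the paper ($g$-function, Szeg\H{o} function, lenses, theta-function outer model, endpoint parametrices, $\mathcal O(|x|^{-1})$ error).

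\textbf{The gap.} Your first step, passing the \emph{matrix} problem to $\lambda=z^2$, is not a change of variables, and it is exactly where the stated formula comes from.
\begin{itemize}
\item Because $m(z)=\sigma_3m(-z)\sigma_3$, the off-diagonal entries of $m$ are odd in $z$, so $m$ is not a function of $\lambda$.
\item You must conjugate, e.g.\ $\widehat m:=\operatorname{diag}(1,z)\,m\,\operatorname{diag}(1,z^{-1})$. This is even and analytic at $\lambda=0$. It tends to a lower-triangular constant, which a constant left factor removes without touching the $(1,2)$ entry.
\item The jumps become $\begin{pmatrix}1&0\\-z\varrho\,e^{2ix\lambda}&1\end{pmatrix}$ on $\Lambda$ and $\begin{pmatrix}1&z^{-1}\varrho^*e^{-2ix\lambda}\\0&1\end{pmatrix}$ on $\Lambda^*$. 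The densities pick up factors $\lambda^{\pm1/2}$.
\item The reduced problem loses the naive Schwarz symmetry. This also affects your planned argument that $\Theta(\Delta(x))\neq0$.
\item Your Szeg\H{o} function is therefore not the $F$ whose limit is \eqref{F-infty}, which is built from $\log\varrho$ and $\log\varrho^*$.
\item The quotient branch points are $E_j^2,\bar E_j^{\,2}$, not $E_j,\bar E_j$, and $\Lambda$ is a parabolic arc from $E_1^2$ to $E_2^2$. A $\lambda$-plane fourth root built on these branch points gives a prefactor proportional to $\operatorname{Im}(E_2^2)-\operatorname{Im}(E_1^2)=2\operatorname{Re}E_1\,(\operatorname{Im}E_2-\operatorname{Im}E_1)$, not $2i(\operatorname{Im}E_2-\operatorname{Im}E_1)$.
\end{itemize}
So your claim that expanding the $\lambda$-model "produces exactly" the stated prefactor times $F_\infty^2e^{2ig_\infty x}$ does not follow. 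Reconciling the two would need an explicit Szeg\H{o}-type identity for $\lambda^{1/2}$ on the quotient curve, and "translate back to $z$" does not supply it.

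The paper avoids this issue. It keeps the matrix RHP and the outer model in the $z$-plane and uses the quotient elliptic curve only for the period, Abel map and theta functions. It uses the $z$-plane factor $\alpha_1$ with $\alpha_1(-z)=\alpha_1(z)^{-1}$, which keeps the off-diagonal entries odd. The expansion $\alpha_1(z)=1+i(\operatorname{Im}E_2-\operatorname{Im}E_1)z^{-1}+\mathcal O(z^{-2})$ is precisely the source of the prefactor.

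\textbf{Two smaller corrections.}
\begin{itemize}
\item Your $g$, normalized by $g=\lambda+g_\infty+\mathcal O(\lambda^{-1})$ with $g_++g_-$ constant on the bands, is the paper's $\Phi=g+z^2$. Conjugating $m$ by $e^{ixg\sigma_3}$ with that $g$ destroys the normalization at infinity. You need the bounded function, namely the paper's $g=\Phi-z^2$ with $g_++g_-=-2z^2$ on $\Sigma_{\rm ell}$, together with the left factor $e^{-ig_\infty x\sigma_3}$.
\item At the fixed foci there is no choice of local model. There $\varrho\sim(z-E)^{1/2}$ and $\Phi-\Phi(E)\sim\beta_E(z-E)^{1/2}$, a hard edge, so the local model is the order-$1/2$ Bessel parametrix in $\zeta_E=X^2\phi_E^2$. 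Airy is the soft-edge model (phase $\sim(z-E)^{3/2}$) and does not apply here.
\end{itemize}
With these fixes, the $\mathcal O(|x|^{-1})$ matching and the small-norm conclusion go through as you describe.
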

	
	\begin{figure}[htbp]
		\centering
		
		\begin{subfigure}{\linewidth}
			\centering
			\includegraphics[width=\linewidth]{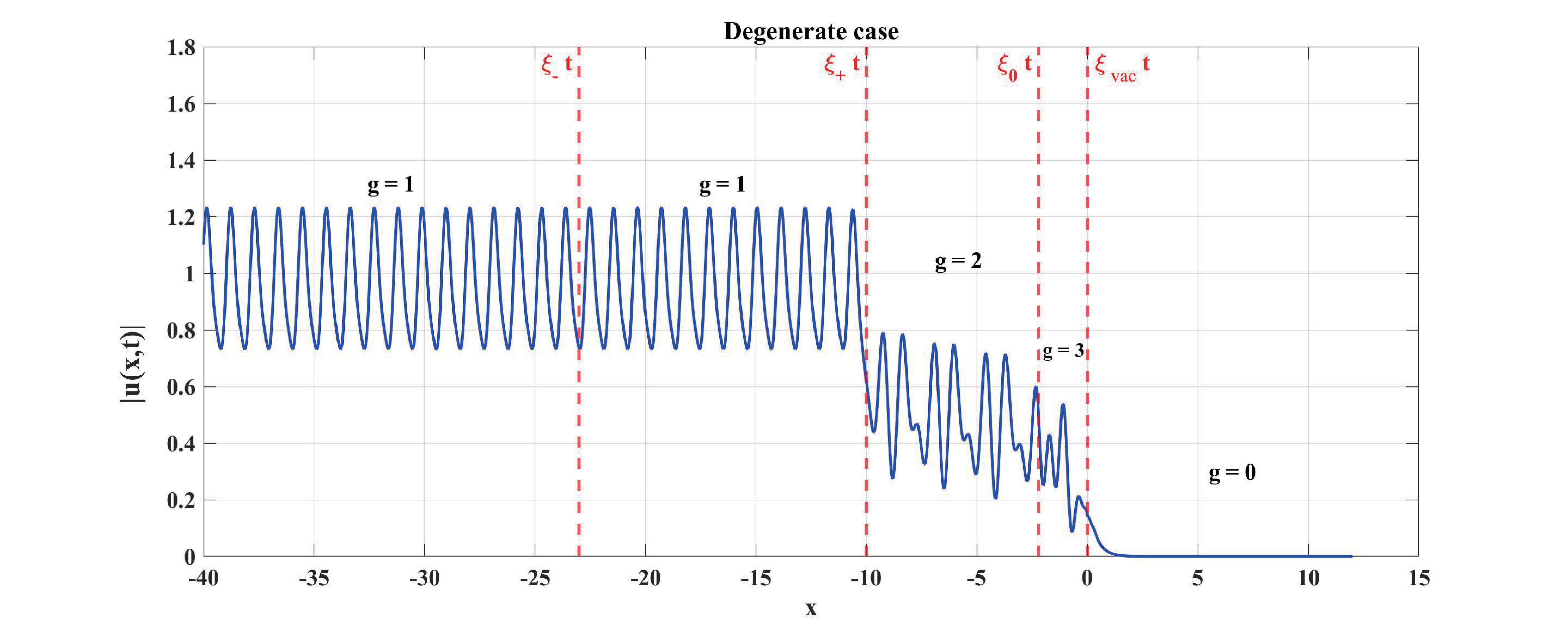}
			\caption{Degenerate case at $t=10$, $\operatorname{Im} E_2=2$, $\operatorname{Im} E_1=1.525$, $\xi_-=-2.304$, $\xi_+=-1.002$, $\xi_0=-0.223$.}
		\end{subfigure}
		
		\vspace{0.8em}
		
		\begin{subfigure}{\linewidth}
			\centering
			\includegraphics[width=\linewidth]{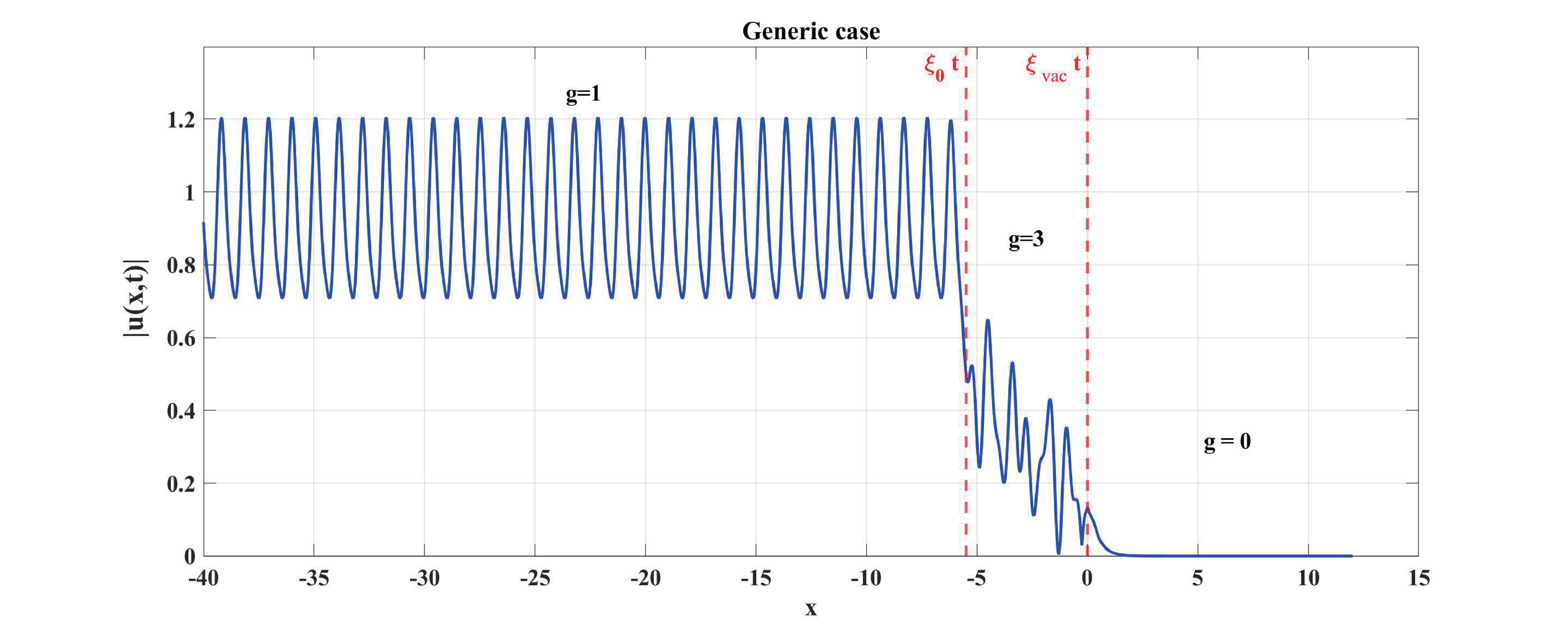}
			\caption{Generic case at $t=10$, $\operatorname{Im} E_2=0.53$, $\operatorname{Im} E_1=0.07$, $\xi_0=-0.550$.}
		\end{subfigure}
		
		\vspace{0.8em}
		\begin{subfigure}{\linewidth}
			\centering
			\begin{tikzpicture}[scale=1.15, line cap=round, line join=round]
				
				\tikzset{
					axis/.style={->, semithick},
					ray/.style={semithick},
					lab/.style={font=\small},
					reg/.style={font=\small},
					toplab/.style={font=\small, fill=white, inner sep=1.2pt},
				}
				
				\colorlet{gOne}{black!6}
				\colorlet{gOneS}{black!10}
				\colorlet{gTwo}{black!14}
				\colorlet{gThree}{black!18}
				\colorlet{vac}{black!24}
				
				\begin{scope}[shift={(0,0)}]
					
					\fill[gOne]   (0,0) -- (-3,1.5) -- (-3,0) -- cycle;
					\fill[gOneS]  (0,0) -- (-3,1.5) -- (-3,3.5) -- cycle;
					\fill[gTwo]   (0,0) --(-3,3.5)-- (-3,4) -- (-1,4) -- cycle;
					\fill[gThree] (0,0) -- (-1,4) -- (2.5,4) -- cycle;
					\fill[vac]    (0,0) -- (2.5,4) -- (3.2,4) -- (3.2,0) -- cycle;
					
					\draw[axis,dashed] (-3.2,0) -- (3.2,0) node[right] {$x$};
					\draw[axis,dashed] (0,0) -- (0,4.2) node[above] {$t$};
					
					\draw[ray] (0,0) -- (-3,1.5);
					\draw[ray] (0,0) -- (-3,3.5);
					\draw[ray] (0,0) -- (-1,4);
					\draw[ray] (0,0) -- (2.5,4);
					
					\node[toplab, above left] at (-3,1.5) {$\xi=\xi_-\,t$};
					\node[toplab, above left] at (-3,3.5) {$\xi=\xi_+\,t$};
					\node[toplab, above left] at (-1,4) {$\xi=\xi_0\,t$};
					\node[toplab, above right] at (2.1,4) {$\xi=\xi_{\mathrm{vac}}\,t$};
					
					\node[reg] at (-2.05,0.55) {genus 1};
					\node[reg] at (-2.10,1.75) {genus $1_s$};
					\node[reg] at (-1.60,2.95) {genus 2};
					\node[reg] at (0.2,3.30) {genus 3};
					\node[reg] at (1.95,1.90) {$\mathrm{vacuum}$};
					
					\node[font=\small] at (0,-0.45) {(c.1)};
					
				\end{scope}
				
				\begin{scope}[shift={(7.5,0)}]
					
					\fill[gOne]   (0,0) -- (-3,4) -- (-3,0) -- cycle;
					\fill[gThree] (0,0) -- (-3,4) -- (2.0,4) -- cycle;
					\fill[vac]    (0,0) -- (2.0,4) -- (3.2,4) -- (3.2,0) -- cycle;
					
					\draw[axis,dashed] (-3.2,0) -- (3.2,0) node[right] {$x$};
					\draw[axis,dashed] (0,0) -- (0,4.2) node[above] {$t$};
					
					\draw[ray] (0,0) -- (-3,4);
					\draw[ray] (0,0) -- (2.0,4);
					
					\node[toplab, above left] at (-3,4) {$\xi=\xi_0\,t$};
					\node[toplab, above right] at (2.0,4) {$\xi=\xi_{\mathrm{vac}}\,t$};
					
					\node[reg] at (-1.90,1.25) {genus 1};
					\node[reg] at (-0.70,2.65) {genus 3};
					\node[reg] at (1.90,1.90) {$\mathrm{vacuum}$};
					
					\node[font=\small] at (0,-0.45) {(c.2)};
					
				\end{scope}
				
			\end{tikzpicture}
			\caption{The solution regions in the $(x,t)$ half-plane: degenerate case on the left and generic case on the right.}
		\end{subfigure}
		
		\caption{
			Long-time asymptotic structure of the elliptic DNLS soliton gas.
			Panels (a) and (b) display representative solution profiles in the degenerate and generic regimes, respectively.
			Panel (c) displays the induced decomposition of the upper \((x,t)\)-half plane into asymptotic sectors, with the degenerate case on the left and the generic case on the right.
			As the self-similar variable \(\xi=x/t\) varies, the leading-order terms are given by unmodulated one-phase finite-gap waves, modulated multi-phase finite-gap waves, or an exponentially small field (vacuum region).
		}
		
		\label{fig:soliton-gas-all}
	\end{figure}
	
	Thus the limiting profile is spatially asymmetric, with a decaying state on the right and an elliptic finite-gap background on the left.
	We next consider the long-time regime
	\[
	\xi=\frac{x}{t},\qquad t\to+\infty .
	\]
	The resulting asymptotic structure depends on the interaction between the saddle points and the zero level set of 
	\(\operatorname{Im}g\). Two cases occur: a degenerate case and a generic case. In both cases, the upper \((x,t)\)-half plane is divided into asymptotic sectors in which the leading term is either exponentially small or described by Riemann theta functions of different effective genera as depicted in Figure~\ref{fig:soliton-gas-all}.
	
	\begin{theorem}[Long-time asymptotics: degenerate case]\label{time degen}
		Assume that the saddle point configuration is degenerate. Then, as \(t\to+\infty\), the elliptic DNLS soliton gas $u(x,t)$ reconstructed from  (\ref{eq:elliptic-reconstruction}) has the following asymptotic behaviors:
		\[
		u(x,t)=
		\begin{cases}
			\mathcal U_1(x,t)+\mathcal O(t^{-1}),
			& -\infty<\xi<\xi_-,\\[1mm]
			\mathcal U_1(x,t)+\mathcal O(t^{-1}),
			& \xi_-<\xi<\xi_+,\\[1mm]
			\mathcal U_2(x,t)+\mathcal O(t^{-1}),
			& \xi_+<\xi<\xi_0,\\[1mm]
			\mathcal U_3(x,t)+\mathcal O(t^{-1}),
			& \xi_0<\xi<\xi_{\mathrm{vac}},\\[1mm]
			\mathcal O(e^{-ct}),
			& \xi>\xi_{\mathrm{vac}},
		\end{cases}
		\]
		where
		\[
		\begin{aligned}
			&\mathcal{U}_1(x,t)
			=
			2i
			\bigl(
			\operatorname{Im}E_2-\operatorname{Im}E_1
			\bigr)
			T_1(\xi)
			(F_{\infty,1}(\xi))^2
			e^{2it g_{\infty,1}(\xi)},\\
			&\mathcal{U}_2(x,t)=
			2i
			\bigl(
			\operatorname{Im}E_2-\operatorname{Im}E_1
			-\operatorname{Im}c_1
			\bigr)
			T_2(\xi)
			\bigl(F_{\infty,2}(\xi)\bigr)^2
			e^{2it g_{\infty,2}(\xi)},\\
			&\mathcal{U}_3(x,t)=
			2i
			\bigl(
			\operatorname{Im}E_2-\operatorname{Im}E_1
			+\operatorname{Im}b_2
			-\operatorname{Im}b_3
			\bigr)
			T_3(\xi)
			\bigl(F_{\infty,3}(\xi)\bigr)^2
			e^{2it g_{\infty,3}(\xi)},
		\end{aligned}
		\]
		are respectively one-phase, two-phase, and three-phase theta-functional expressions. Here 
		\[
		T_n(\xi)
		=
		\frac{
			\Theta(0;\widehat{\mathcal P})
			\Theta\bigl(
			\widehat{\mathcal A}_{n,\infty}
			-\boldsymbol{\Delta}_n(\xi);
			\widehat{\mathcal P}
			\bigr)
		}{
			\Theta\bigl(
			\boldsymbol{\Delta}_n(\xi);
			\widehat{\mathcal P}
			\bigr)
			\Theta\bigl(
			\widehat{\mathcal A}_{n,\infty};
			\widehat{\mathcal P}
			\bigr),
		}\quad n=1,2,3,
		\]
		and $\widehat{\mathcal P}$, $\widehat{\mathcal A}_{n,\infty}$, $\boldsymbol{\Delta}_n$ are given by \eqref{period-matrix-t}, \eqref{limit-of-abel-map}, \eqref{eq:Delta-n-vector} respectively.
		The parameters $g_{\infty,n}$ and $F_{\infty,n}$ are defined by \eqref{g-function-n} and \eqref{eq:Fn-infinity},respectively. The critical values \(\xi_\mathrm{vac}\), \(\xi_-\), \(\xi_+\), and \(\xi_0\) are determined by the \(g\)-function mechanism in Section~\ref{long-time- asymptotics}.
	\end{theorem}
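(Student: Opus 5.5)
The plan is a Deift--Zhou nonlinear steepest descent analysis of RHP~\ref{prob:elliptic-rhp}. After the Schwarz-function reduction, its jump is supported on the mother body of the ellipse, i.e. the focal segment \([E_1,E_2]\), together with its images under \(z\mapsto -z\) and \(z\mapsto \bar z\). The oscillatory factor is \(e^{2it\theta(z;\xi)}\) with \(\theta=\xi z^2+2z^4\), up to the normalization fixed by \eqref{eq:elliptic-reconstruction}. Because \(\theta\) is even in \(z\), we first pass to \(\lambda=z^2\) using the symmetry \(z\mapsto -z\). This halves the number of contours and lets all Abelian objects live on a quotient curve.

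\textbf{Step 1: the \(g\)-function in each sector.} For each range of \(\xi\) we introduce a \(g\)-function \(g_n(z;\xi)\) with the following properties:
\begin{itemize}
\item it is analytic off a \(\xi\)-dependent subset \(\Sigma_n(\xi)\) of the mother body and its symmetric images;
\item \(g_n-\theta\) is bounded at infinity, with expansion constant \(g_{\infty,n}\) as in \eqref{g-function-n};
\item \(g_{n,+}+g_{n,-}\) is piecewise constant on the bands;
\item \(g_{n,+}-g_{n,-}\) is real constant on the gaps;
\item \(\operatorname{Im} g_n\) has the sign needed for lens opening.
\end{itemize}
Concretely, \(dg_n\) is a meromorphic differential on the genus-\(n\) quotient curve with branch points \(E_1,E_2\) (or \(E_1\) and a moving endpoint \(c_1\)) together with movable points \(b_2,b_3\). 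It has prescribed poles at infinity matching \(\theta\) and normalized \(A\)-periods. The endpoints solve the Whitham-type moment equations.

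The critical speeds arise as the parameters at which a band closes or opens:
\begin{itemize}
\item \(\xi_{\rm vac}\): all bands close, so \(g=\theta\) and the jump is exponentially close to the identity, giving the \(\mathcal O(e^{-ct})\) vacuum;
\item \(\xi_0\): the genus drops from 3 to 2;
\item \(\xi_+\): the genus drops from 2 to 1;
\item \(\xi_-\): a saddle point crosses the zero level set of \(\operatorname{Im} g\). The genus stays 1, but the sign structure changes, which explains the two genus-1 sectors.
\end{itemize}

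\textbf{Step 2: deformation and the model problem.} We conjugate by \(e^{itg_n\sigma_3}\) and factor the jumps on the bands. We then open lenses, using the sign of \(\operatorname{Im}g_n\) so that the off-band jumps become exponentially small, and insert a scalar Szeg\H{o}-type function \(F_n\) to remove the non-constant density from the band jumps. This produces \(F_{\infty,n}\) as in \eqref{eq:Fn-infinity}.

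The outer model problem has constant jumps \(\pm i\sigma_{1}\)-type on the bands and \(e^{i\Omega\sigma_3}\) on the gaps. We solve it with theta functions on the quotient surface, with period matrix \eqref{period-matrix-t} and Abel map limit \eqref{limit-of-abel-map}. The phase vector \(\boldsymbol\Delta_n(\xi)\) in \eqref{eq:Delta-n-vector} collects the gap constants, which are linear in \(t\) and depend on \(\xi\). We use the \(z\mapsto -z\) symmetry so that the theta-function solution respects the even/odd structure of the Gerdjikov--Ivanov RHP.

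At each endpoint we insert local parametrices. These are Airy or Bessel type at hard and soft edges, or simply bounded because the density vanishes as a square root on the ellipse's focal segment. The resulting small-norm error problem gives \(\mathcal O(t^{-1})\).

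\textbf{Step 3: reconstruction.} Expanding the outer solution at infinity and substituting into \eqref{eq:elliptic-reconstruction} gives \(2i\,(\text{sum of band-end imaginary parts})\,T_n F_{\infty,n}^2e^{2itg_{\infty,n}}\). The prefactor is the coefficient of the \(1/z\) term of \((\prod(z-\text{endpoints}))^{1/2}\) in the symmetric variables, which yields the stated combinations \(\operatorname{Im}E_2-\operatorname{Im}E_1-\operatorname{Im}c_1\), and so on.

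\textbf{Main obstacle.} The hardest step is Step 1: proving that the \(g\)-function equations have solutions with the required sign inequalities throughout each sector. In particular, in the degenerate configuration one must establish the existence of \(\xi_-,\xi_+\) and the topology of the zero level set of \(\operatorname{Im}g\) near the saddle. I would first treat the genus-1 sector near \(\xi\to-\infty\), where \(g\) is the large-\(x\) \(g\)-function of Theorem~\ref{spatial} deformed in \(\xi\). From there I would continue in \(\xi\), tracking the saddle of \(dg\), via monotonicity of the endpoint equations obtained by differentiating the moment conditions in \(\xi\).
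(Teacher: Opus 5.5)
Your overall route matches the paper's: the quotient variable $\lambda=z^2$, a $g$-function, a scalar $F_n$, a theta-function outer model on $\widehat{\mathcal X}_n$, local parametrices, and an $\mathcal O(t^{-1})$ small-norm estimate. The gap is in Step~1. The $g$-function template you use there cannot produce the two-phase sector.

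\textbf{Why your ansatz fails.} You take the bands to be a $\xi$-dependent subset of the mother body and its images, with a moving endpoint $c_1$ taking the place of $E_2$. Under $\lambda=z^2$, both $\mathscr I$ and $-\mathscr I$ map onto a single arc $\Lambda\subset\mathbb C^+$, and $\pm\mathscr I^*$ map onto $\Lambda^*$. If every band is a subarc of $\Lambda\cup\Lambda^*$, the branch points in $\mathbb C^+$ are endpoints of disjoint subarcs of $\Lambda$, so there is an even number of them. The quotient genus, which is that number minus one, is then odd. Hence no genus-2 sector can arise. In particular, the prefactor $\operatorname{Im}E_2-\operatorname{Im}E_1-\operatorname{Im}c_1$ of $\mathcal U_2$, which requires $E_1$, $E_2$ and $c_1$ to be branch points at the same time, is out of reach.

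\textbf{What the paper does instead.} The fixed points $\pm E_j,\pm\bar E_j$ remain band endpoints in every finite-genus sector. They are hard edges, since $P_{n+2}(E_j^2,\xi)\neq0$, and carry Bessel parametrices. The new branch points open \emph{off} the mother body, from real double critical points of the Abelian phase:
\begin{itemize}
\item at $\xi_+$, the pair $C_1,\bar C_1$ is born from a real double zero $c_+$ of $P_3$, with $\widehat{\mathcal R}_2\to(\lambda-c_+)\widehat{\mathcal R}_1$ and $P_4\to(\lambda-c_+)P_3$;
\item at $\xi_0$, a second pair is born from a double zero of $Q_2=P_4/((\lambda-C_1)(\lambda-\bar C_1))$.
\end{itemize}
These moving points are fixed by the Boutroux conditions and the soft-edge conditions \eqref{soft-edge-conditions}. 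That is why they carry Airy parametrices, and why they contribute the extra fourth-root factors in \eqref{eq:alpha2}--\eqref{eq:alpha3}.

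\textbf{The transition mechanism.} Your mechanism is not the one encoded in the degenerate hypothesis. That hypothesis is a sign condition on $D_3(\xi)=\operatorname{disc}_\lambda P_3(\lambda,\xi)$, and $\xi_-<\xi_+$ are its two simple zeros. At $\xi_-$ two real critical points collide, but the chart stays admissible; this gives the $1_s$ sector. At $\xi_+$ a further real collision forces the opening described above. You place at $\xi_-$ a critical point reaching the zero level of $\operatorname{Im}\widehat\Phi_1$. In the paper that is the generic-case mechanism at $\xi_0$, and there it raises the genus by two rather than leaving it at one.

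\textbf{The reconstruction prefactor.} It is not a $z^{-1}$ coefficient of $(\prod(z-\text{endpoints}))^{1/2}$. Over the $z\mapsto-z$ symmetric endpoint set that product is even in $z$, so this coefficient vanishes. The prefactor is $\widehat\alpha_n$ in $\alpha_n(z)=1+i\widehat\alpha_n/z+\mathcal O(z^{-2})$, where $\alpha_n$ is the fourth root of the signed ratio in \eqref{eq:alpha1}--\eqref{eq:alpha3}.

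Finally, the paper assumes rather than proves the existence and sign properties of these $g$-functions. Your continuation-in-$\xi$ plan would therefore go beyond the paper, but only after it is rebuilt on the ansatz above.
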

	
	\begin{remark}
		The first two sectors in Theorem~\ref{time degen} (see also Figure \ref{fig:soliton-gas-all}(a) and (c)) have the same leading-order expression of theta function but correspond to different signature charts of the \(g\)-function. For this reason they are treated separately in the detailed asymptotic analysis.
	\end{remark}

	\begin{theorem}[Long-time asymptotics: generic case]\label{time gener}
		Assume that the saddle point configuration is generic. Then, as \(t\to+\infty\), the elliptic DNLS soliton gas $u(x,t)$ reconstructed from  (\ref{eq:elliptic-reconstruction}) has the following asymptotic behaviors:
		\[
		u(x,t)=
		\begin{cases}
			\mathcal U_1(x,t)+\mathcal O(t^{-1}),
			& -\infty<\xi<\xi_0,\\[1mm]
			\mathcal U_3(x,t)+\mathcal O(t^{-1}),
			& \xi_0<\xi<\xi_{\mathrm{vac}},\\[1mm]
			\mathcal O(e^{-ct}),
			& \xi>\xi_{\mathrm{vac}},
		\end{cases}
		\]
		where \(\mathcal U_1\) and \(\mathcal U_3\) denote the one-phase and three-phase theta-functional leading terms, respectively, given above.
	\end{theorem}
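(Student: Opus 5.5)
The plan is to run the Deift--Zhou nonlinear steepest descent on RHP~\ref{prob:elliptic-rhp}, which lives on the mother body (the focal segment of the ellipse $\mathcal D_+$ and its symmetric images), in the generic configuration. Throughout we use the $z\mapsto -z$ symmetry to pass to $\lambda=z^2$, so that every $g$-function and Riemann surface is built on the quotient, and we take the phase to be $\theta(z;\xi)=2z^2\xi+4z^4$, up to the normalisation fixed in Section~\ref{long-time- asymptotics}. The regions are ordered by decreasing $\xi$.

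\textbf{Vacuum region.} For $\xi>\xi_{\mathrm{vac}}$ the sign chart of $\operatorname{Im}\theta$ should make the jump on the whole mother body exponentially close to the identity after triangular factorisation: $e^{\pm 2it\theta}$ decays uniformly there. The small-norm theorem then gives $u=\mathcal O(e^{-ct})$. The threshold $\xi_{\mathrm{vac}}$ is defined as the first value at which the zero level set of $\operatorname{Im}\theta$ touches the segment.

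\textbf{Three-phase region.} For $\xi_0<\xi<\xi_{\mathrm{vac}}$ we introduce a $g$-function $g_3(z;\xi)$ with branch cuts on sub-arcs of the mother body, $[E_1,b_2]$, $[b_3,E_2]$ and their images. The moving endpoints $b_2(\xi),b_3(\xi)$ are fixed by the Whitham-type endpoint conditions, namely vanishing $B$-periods and the correct behaviour at $\infty$ of $dg_3$. We then proceed in four steps.
\begin{enumerate}[label=(\roman*)]
\item Conjugate by $e^{itg_3\sigma_3}$ so that the jump on the bands becomes oscillatory and constant modulo a real phase, and the jump on the gaps becomes exponentially small.
\item Open lenses around the bands using the factorisation of the density.
\item Solve the outer model problem with a Szeg\H{o}-type function $F_3$, which removes the density and yields $F_{\infty,3}$, together with theta functions on the genus-$3$ quotient surface with period matrix $\widehat{\mathcal P}$ from \eqref{period-matrix-t}. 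Reading off the $(1,2)$ entry at $\infty$ gives $\mathcal U_3$. The prefactor $\operatorname{Im}E_2-\operatorname{Im}E_1+\operatorname{Im}b_2-\operatorname{Im}b_3$ is the total band length in the $\lambda$-variable.
\item Build local parametrices at the endpoints. Soft edges $b_j$ get Airy parametrices. Hard edges $E_j$, where the density has square-root or constant behaviour, get Bessel-type parametrices. Matching errors are $\mathcal O(t^{-1})$, and the small-norm argument gives the stated error.
\end{enumerate}

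\textbf{One-phase region.} For $\xi<\xi_0$, in the generic case, the level set configuration changes: both interior points $b_2,b_3$ collide, or exit to the endpoints, simultaneously. The $g$-function then reduces to the $\xi$-modulated one-band $g_1$ on $[E_1,E_2]$, with parametrices only at $E_1,E_2$. This yields $\mathcal U_1$ with the constant band length $\operatorname{Im}E_2-\operatorname{Im}E_1$, and the same error analysis applies. Genericity is exactly the assumption that no intermediate two-phase regime appears, which is the difference from Theorem~\ref{time degen}: the saddle points never land on the zero set of $\operatorname{Im}g$ in between.

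\textbf{Main obstacle.} The hardest step is proving that the inequalities on $\operatorname{Im}g_3$ and $\operatorname{Im}g_1$ hold on the lens boundaries and gaps throughout each open sector, and hence that the transitions happen exactly at $\xi_0$ and $\xi_{\mathrm{vac}}$. First I would analyse the quartic-in-$\lambda$ differential $dg$ as a polynomial times $R(\lambda)^{-1/2}$, and track its zeros (saddle points) continuously in $\xi$. Monotonicity of the endpoint equations then excludes any extra sign changes, and that monotonicity is precisely what the generic-case assumption supplies.
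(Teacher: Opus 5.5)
Your overall skeleton is the paper's: a small-norm vacuum sector, genus-1 and genus-3 $g$-functions on the quotient curve in $\lambda=z^2$, the scalar function $F_n$, a theta-function outer model, Bessel parametrices at the fixed endpoints and Airy parametrices at the moving ones, and an $\mathcal O(t^{-1})$ small-norm error. The gap is in the genus-3 geometry and in what defines $\xi_0$.

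In the paper, ``generic'' means $D_3(\xi)=\operatorname{disc}_\lambda P_3(\lambda,\xi)<0$ for all $\xi<\xi_{\mathrm{vac}}$. So the genus-1 numerator always has one real zero and a non-real pair $d(\xi),\overline{d(\xi)}$. It also requires $H_1(\xi)=\operatorname{Im}\widehat\Phi_1(\xi,d(\xi))$ to be strictly monotone. Then $\xi_0$ is the unique transversal zero of $H_1$: it is exactly the moment at which the complex saddle $d$ reaches the zero level set of $\operatorname{Im}\widehat\Phi_1$. Two conjugate pairs of branch points are born there, $B_1,B_2\to d(\xi_0)$ and $\overline{B_1},\overline{B_2}\to\overline{d(\xi_0)}$. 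This is why the genus jumps from one to three with no genus-2 sector; genus 2 arises only in the degenerate case, from a \emph{real} critical collision $D_3=0$.

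Your characterisation of genericity (``the saddle points never land on the zero set of $\operatorname{Im} g$ in between'') inverts this mechanism. Your proposal also never states the condition that fixes $\xi_0$. The alternative you offer, interior points exiting to the endpoints, would collapse both bands $[E_1,b_2]$ and $[b_3,E_2]$ rather than produce genus one.

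You also pin the moving endpoints to the mother body, which is inconsistent with the Airy edges you invoke. Soft-edge behaviour requires $P_5(b_j^2,\xi)=0$. Together with the two conditions at infinity and the three Boutroux conditions $\operatorname{Im}\oint_{\widehat a_j}d\widehat\Phi_3=0$, this gives nine real equations. The unknowns are the five real coefficients of $P_5$ and the four real coordinates of the \emph{complex} points $B_1,B_2$. Forcing $B_k\in\Lambda$ leaves seven unknowns, so the system is overdetermined. Its actual solution branches off the complex zero $d(\xi_0)$ of $P_3$, which in general is not on $\Lambda$. Consequently the period matrix, $\boldsymbol\Delta_3$, $F_{\infty,3}$, $g_{\infty,3}$ and the prefactor you would compute are not those defining the theorem's $\mathcal U_3$.

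Monotonicity enters the paper only through $H_1$, to make $\xi_0$ unique. The sign inequalities themselves are propagated through each open sector by isotopy of the zero level set of $\operatorname{Im}\widehat\Phi_n$. That argument rests on the stated nondegeneracy hypotheses: simple critical points, no collisions with branch points, no saddle connections, and transversal transitions. Under them it suffices to check the inequalities at one reference $\xi$, with no monotonicity of endpoint equations needed.

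Two smaller corrections:
\begin{itemize}
\item After the $g$- and $F_3$-conjugations, the gap jumps are constant diagonal matrices $e^{(it\Omega_{3,j}+\kappa_{3,j})\sigma_3}$, not exponentially small. They are exactly what the theta functions resolve.
\item The prefactor is the $1/z$ coefficient $\widehat\alpha_3$ of the $z$-plane quarter-root factor $\alpha_3$, not a band length in $\lambda$.
\end{itemize}
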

	
	Theorems~\ref{time degen} and~\ref{time gener} show that the DNLS soliton gas has a stratified long-time asymptotic structure. Depending on the self-similar variable \(\xi\), the leading behavior is either an exponentially small field or a finite-gap oscillatory wave with different effective genera. The resulting decomposition of the upper \((x,t)\)-plane is determined by the limiting spectral distribution and the associated \(g\)-function.
	\par
	A specific DNLS feature of this finite-gap description is the reduction of the effective Abelian geometry induced by the symmetry \(z\mapsto -z\). The natural symmetric spectral curve in the original \(z\)-plane has higher genus, but the RHP involves only the invariant part of its holomorphic differentials and period data. Equivalently, the relevant Abelian data are pulled back from the quotient curve under the map \(\lambda=z^2\). This is a quotient reduction of the effective finite-gap geometry, rather than a degeneration of the spectral curve itself. The detailed construction is given in Section~\ref{sec:genus-reduction}.
	
	The asymptotic analysis also leads to a kinetic description of the DNLS soliton gas on the quotient spectral plane.
	Fix a genus-\(n\) asymptotic sector, \(n\in\{1,2,3\}\), and let 
	\(\Lambda_n(\xi)\) denote the active quotient spectral support in the variable 
	\(\lambda=z^2\). Let \(\mathcal N_n(x,t,\lambda)\) be the cumulative spectral distribution 
	defined in Section~\ref{sec:dnls-kinetic}.
	The effective velocity $v_{{\rm eff},n}(x,t,\lambda)$ satisfies the kinetic equation
	\begin{equation}\label{eq:soliton-gas-kinetic}
		v_{{\rm eff},n}(x,t,\lambda)
		=
		-4\operatorname{Re}\lambda
		+
		\frac{1}{\operatorname{Im}\lambda}
		\int_{\Lambda_n(\xi)}
		\log\left|
		\frac{\lambda-\overline{\mu}}
		{\lambda-\mu}
		\right|
		\Bigl[
		v_{{\rm eff},n}(x,t,\mu)
		-
		v_{{\rm eff},n}(x,t,\lambda)
		\Bigr]
		\mathfrak f_n(x,t,\mu)\,d\mu,
		\qquad
		\lambda\in\Lambda_n(\xi),
	\end{equation}
	where the density function $\mathfrak f_n(x,t,\mu)$ is given in \eqref{kinetic-density}.
	The first term \(-4\operatorname{Re}\lambda\) is precisely the velocity of the free single soliton shown in \eqref{eq:free-soliton-velocity}. The integral term describes the renormalization of this velocity caused by interactions with the surrounding spectral components of the soliton gas. 
	\par
	Finally, we give an operator theoretic characterization of the continuum limit solution. The \(\bar\partial\)-problem~\ref{prob:dbar} in (\ref{eq:dbar-problem}) can be reformulated in terms of an Its--Izergin--Korepin--Slavnov integrable operator \cite{IIKS}. After the change of variable \(\lambda=z^2\), one obtains an operator \(\mathcal K^+\) defined in \eqref{K-operator} whose kernel has an integrable representation. Under the corresponding trace class assumptions, the continuum \(\tau\)-function is given by
	\[
	\tau^+(x,t)
	=
	\det\left(\operatorname{Id}-\mathcal K^+\right).
	\]
	Moreover, it is found that
	\[
	|u(x,t)|^2
	=
	2i\,\partial_x
	\log
	\frac{\tau^+(x,t)}{\tau^-(x,t)},
	\qquad
	\tau^-(x,t)=\overline{\tau^+(x,t)}.
	\]
	Thus, in contrast to the standard NLS case, the \(\tau\)-function of the DNLS equation is generally complex-valued, and the density \(|u|^2\) is encoded in the \(x\)-variation of the phase of \(\tau^+\).
	\par
	The present work provides, to our knowledge, the first rigorous inverse-scattering construction of a soliton gas for the DNLS equation. Our analysis establishes a direct connection between the condensation of discrete soliton spectra and finite-gap geometry through a continuum \(\bar\partial\)-problem and its reduction to a mother-body RHP. A distinctive feature of the DNLS equation is the involution 
	\(z\mapsto -z\), which induces a quotient reduction of the underlying Abelian geometry and leads to a reduction of the effective genus of the associated hyperelliptic curve. These results reveal a new geometric mechanism underlying soliton gases beyond the standard AKNS-type formalism and provide new perspectives for the study of asymptotic behaviors of other soliton gases.

	\section{From discrete solitons to the continuum DNLS soliton gas}
	\label{Continuum Limit}
	
	In this section we construct the continuum soliton gas limit for the equation \eqref{dnls3}.  The construction has three steps.  First, we give the reflectionless RHP by inverse scattering analysis and the corresponding pure $N$-soliton solution.  Second, we remove the poles by enclosing them with fixed contours and pass to the limit in which the discrete spectrum condenses on two dimensional domains.  Under the $1/N$ scaling of the norming constants, the finite pole-removal problem converges to a compactly supported $\bar\partial$-problem.  Third, for condensation domains admitting a Schwarz function, we identify the exterior large-$z$ expansion, and hence the reconstructed potential.
	
	Throughout this section, for a set $A\subset\mathbb C$ we write
	$
	A^*:=\{\bar z:z\in A\}.
	$
	For a scalar or matrix-valued function $h$, the Schwarz conjugate is denoted by
	$
	h^*(z):=\overline{h(\bar z)},
	$
	with entrywise conjugation in the matrix case.
	
	\subsection{The reflectionless RHP}
	\label{subsec:basic-rhp}
	
	We do not rederive the inverse scattering transform here.  Instead, we record the RHP that serves as the starting point for the continuum limit.  Let the initial datum $u(x,0)=u_0(x)$ belong to the Schwartz class $\mathcal S(\mathbb R)$.  For the $x$-part of the Lax pair \eqref{eq:lax3} associated with \eqref{dnls3}, let
	\[
	\mathcal J_\pm(x,z)=\bigl(\mathcal J_\pm^{(1)}(x,z),\mathcal J_\pm^{(2)}(x,z)\bigr)
	\]
	be the Jost solutions normalized by
	$
	\mathcal J_\pm(x,z)e^{iz^2x\sigma_3}\to \mathbb I$ as $x\to\pm\infty$.
	They are related by the scattering matrix $S(z)$ as
	\[
	\mathcal J_+(x,z)=\mathcal J_-(x,z)S(z),
	\qquad z\in\mathbb R\cup i\mathbb R .
	\]
	The symmetries of the \eqref{dnls3} spectral problem imply
	\begin{equation}
		\label{eq:S-form}
		S(z)=
		\begin{pmatrix}
			a^*(z)&b(z)\\
			-b^*(z)&a(z)
		\end{pmatrix},
		\qquad
		S(z)=\sigma_2S^*(z)\sigma_2,
		\qquad
		S(z)=\sigma_3S(-z)\sigma_3 .
	\end{equation}
	Define the reflection coefficient by
	$
	r(z)=\frac{b(z)}{a(z)}.
	$
	We assume that $a(z)$ has $N$ simple zeros
	\[
	z_j\in\mathbb C_{\mathrm I}:=\{z\in\mathbb C:\operatorname{Re}z>0,\ \operatorname{Im}z>0\},
	\qquad j=1,\ldots,N,
	\]
	with $z_j\neq z_k$ for $j\neq k$ and $z_j\notin \mathbb R\cup i\mathbb R$.  By the spectral symmetries, the full discrete spectrum is
	$
	\{z_j,-z_j,\bar z_j,-\bar z_j\}_{j=1}^N .
	$
	Let the phase function be
	\begin{equation}
		\label{eq:theta}
		\theta(x,t,z)=z^2x+2z^4t .
	\end{equation}
	
	\begin{basicproblem}\label{prob:basic-rhp}
		Find a $2\times2$ matrix-valued function $M(x,t,z)$ with the following properties:
		\begin{enumerate}
			\item \textbf{Analyticity.}
			$M(x,t,z)$ is meromorphic for $z\in \mathbb C\setminus(\mathbb R\cup i\mathbb R)$ and has simple poles at
			$
			\{z_j,-z_j,\bar z_j,-\bar z_j\}_{j=1}^N .
			$
			
			\item \textbf{Jump condition.}
			For $z\in\mathbb R\cup i\mathbb R$, the following jump condition holds
			\begin{equation}
				\label{eq:basic-jump}
				M_+(x,t,z)=M_-(x,t,z)J(x,t,z),
			\end{equation}
			where
			\begin{equation}
				\label{eq:basic-jump-matrix}
				J(x,t,z)=
				\begin{pmatrix}
					1+|r(z)|^2&r^*(z)\,e^{-2i\theta(x,t,z)}\\
					r(z)e^{2i\theta(x,t,z)}&1
				\end{pmatrix}.
			\end{equation}
			
			\item \textbf{Residue conditions.}
			At the discrete spectrum, the residue conditions satisfy
			\begin{equation}
				\label{eq:basic-residues}
				\begin{aligned}
					\operatorname*{Res}_{z=z_j} M(x,t,z)
					&=
					\lim_{z\to z_j} M(x,t,z)
					\begin{pmatrix}
						0 & 0 \\
						c_j e^{2i\theta(x,t,z_j)} & 0
					\end{pmatrix},\\[1mm]
					\operatorname*{Res}_{z=\bar z_j} M(x,t,z)
					&=
					\lim_{z\to \bar z_j} M(x,t,z)
					\begin{pmatrix}
						0 & -\bar c_j e^{-2i\theta(x,t,\bar z_j)} \\
						0 & 0
					\end{pmatrix},\\[1mm]
					\operatorname*{Res}_{z=-z_j} M(x,t,z)
					&=
					-\sigma_3
					\bigl(\operatorname*{Res}_{z=z_j} M(x,t,z)\bigr)
					\sigma_3,
					\qquad
					\operatorname*{Res}_{z=-\bar z_j} M(x,t,z)
					=
					-\sigma_3
					\bigl(\operatorname*{Res}_{z=\bar z_j} M(x,t,z)\bigr)
					\sigma_3.
				\end{aligned}
			\end{equation}

			\item \textbf{Symmetries.}
			\begin{equation}
				\label{eq:basic-symmetry}
				M(x,t,z)=\sigma_2M^*(x,t,z)\sigma_2,
				\qquad
				M(x,t,z)=\sigma_3M(x,t,-z)\sigma_3 .
			\end{equation}
			
			\item \textbf{Normalization.}
			\begin{equation}
				\label{eq:basic-normalization}
				M(x,t,z)=\mathbb I+\mathcal{O}(z^{-1}),
				\qquad z\to\infty .
			\end{equation}
		\end{enumerate}
	\end{basicproblem}
	
	The solution of \eqref{dnls3} is reconstructed from the large-$z$ expansion by
	\begin{equation}
		\label{eq:reconstruction-basic}
		u(x,t)=2i\lim_{z\to\infty}z[M(x,t,z)]_{12}.
	\end{equation}
	
	We now restrict to the reflectionless case, namely 
	$
	r(z)\equiv0 .
	$
	Then the jump on $\mathbb R\cup i\mathbb R$ disappears, and Basic RHP~\ref{prob:basic-rhp} becomes a meromorphic RHP with prescribed simple poles.
	
	\begin{prop}
		\label{prop:pure-N-soliton}
		In the reflectionless case, the solution of Basic RHP~\ref{prob:basic-rhp} has the rational representation
		\begin{equation}
			\label{eq:soliton-ansatz}
			\begin{aligned}
				M^{(N)}(x,t,z)=\mathbb I
				&+\sum_{j=1}^N\frac{1}{z-z_j}
				\begin{pmatrix}
					p_j&0\\ q_j&0
				\end{pmatrix}
				+\sum_{j=1}^N\frac{1}{z-\bar z_j}
				\begin{pmatrix}
					0&-\bar q_j\\ 0&\bar p_j
				\end{pmatrix} \
				&+\sum_{j=1}^N\frac{1}{z+z_j}
				\begin{pmatrix}
					-p_j&0\\ q_j&0
				\end{pmatrix}
				+\sum_{j=1}^N\frac{1}{z+\bar z_j}
				\begin{pmatrix}
					0&-\bar q_j\\0&-\bar p_j
				\end{pmatrix},
			\end{aligned}
		\end{equation}
		where $p_j=p_j(x,t)$ and $q_j=q_j(x,t)$ are determined by the finite algebraic system
		\begin{subequations}
			\label{eq:finite-algebraic-system}
			\begin{align}
				\begin{pmatrix}p_j\\q_j\end{pmatrix}
				&=
				c_j e^{2i\theta(x,t,z_j)}
				\left[
				\begin{pmatrix}0\\1\end{pmatrix}
				+\sum_{k=1}^N
				\frac{1}{z_j-\bar z_k}
				\begin{pmatrix}
					-\bar q_k\\ \bar p_k
				\end{pmatrix}
				+\sum_{k=1}^N
				\frac{1}{z_j+\bar z_k}
				\begin{pmatrix}
					-\bar q_k\\-\bar p_k
				\end{pmatrix}
				\right],
				\\[1mm]
				\begin{pmatrix}-\bar q_j\\ \bar p_j\end{pmatrix}
				&=
				-\bar c_j e^{-2i\theta(x,t,\bar z_j)}
				\left[
				\begin{pmatrix}1\\0\end{pmatrix}
				+\sum_{k=1}^N
				\frac{1}{\bar z_j-z_k}
				\begin{pmatrix}
					p_k\\ q_k
				\end{pmatrix}
				+\sum_{k=1}^N
				\frac{1}{\bar z_j+z_k}
				\begin{pmatrix}
					-p_k\\ q_k
				\end{pmatrix}
				\right].
			\end{align}
		\end{subequations}
		Consequently, it follows that
		\begin{equation}
			\label{eq:N-soliton-reconstruction}
			u_N(x,t)=-4i\sum_{j=1}^N\bar q_j(x,t).
		\end{equation}
	\end{prop}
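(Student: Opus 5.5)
With $r\equiv0$ the jump on $\mathbb R\cup i\mathbb R$ is the identity, so $M$ extends to a meromorphic function on $\mathbb C$ whose only singularities are simple poles at $\{\pm z_j,\pm\bar z_j\}$. The plan is to write down the partial-fraction form forced by the residue conditions, impose the symmetries to fix the coefficients, reduce the residue conditions to a linear system, and read off $u_N$ from the $1/z$ term.

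\textbf{Partial-fraction form.} Since $M\to\mathbb I$ at infinity, Liouville's theorem applied to $M$ minus its principal parts gives
\[
M=\mathbb I+\sum_{\zeta}\frac{R_\zeta}{z-\zeta},
\]
summed over the $4N$ poles. The residue condition at $z_j$ has the form $\operatorname{Res}M=\lim M\cdot\begin{pmatrix}0&0\\ \ast&0\end{pmatrix}$, so only the first column of $R_{z_j}$ can be nonzero; we write it as $(p_j,q_j)^T$. Likewise, only the second column of $R_{\bar z_j}$ can be nonzero. The involution $M(z)=\sigma_3M(-z)\sigma_3$ gives $R_{-\zeta}=-\sigma_3R_\zeta\sigma_3$. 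Applied to the $z_j$ residue, this flips the sign of $p_j$ and keeps $q_j$, which yields the $-z_j$ term in \eqref{eq:soliton-ansatz}. The conjugation symmetry $M=\sigma_2M^*\sigma_2$ relates $R_{\bar z_j}$ to $\overline{R_{z_j}}$: since $\sigma_2\begin{pmatrix}a&0\\b&0\end{pmatrix}\sigma_2=\begin{pmatrix}0&-b\\0&a\end{pmatrix}$, the second column at $\bar z_j$ is $(-\bar q_j,\bar p_j)^T$. The $-\bar z_j$ term then follows from the $\sigma_3$ symmetry once more. One should check that these sign conventions agree with those in \eqref{eq:basic-residues}; any discrepancy would be absorbed into the sign convention for $c_j$.

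\textbf{The algebraic system and reconstruction.} Evaluate the residue condition at $z_j$: the first column of the residue equals $c_je^{2i\theta(z_j)}$ times the second column of $M$ evaluated at $z_j$. Only the $\mathbb I$ term and the $\bar z_k$, $-\bar z_k$ terms contribute to that second column, so this gives exactly the first equation in \eqref{eq:finite-algebraic-system}. The same computation at $\bar z_j$, now using the first column of $M$, gives the second equation; it is also the complex conjugate of the first, consistent with the symmetry. The conditions at $-z_j$ and $-\bar z_j$ then hold automatically by construction.

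For the reconstruction, $\lim_{z\to\infty} z[M]_{12}$ equals the sum of the $(1,2)$ residues, namely $\sum_j(-\bar q_j-\bar q_j)=-2\sum_j\bar q_j$. Substituting into \eqref{eq:reconstruction-basic} gives $u_N=-4i\sum_j\bar q_j$.

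\textbf{Main obstacle: solvability.} The remaining issue is that the $4N\times4N$ linear system (in $p_j,q_j$ and their conjugates, treated as a real-linear system) must be uniquely solvable for all $x,t$. I would prove this through uniqueness for the RHP, by a vanishing lemma. If $M_0$ is a solution of the homogeneous problem with $M_0=\mathcal O(z^{-1})$, then $M_0(z)M_0^*(\bar z)^{\dagger}$-type Hermitian combinations, together with the $\sigma_2$ symmetry, make the Zakharov--Shabat style argument applicable: the pole contributions cancel pairwise because of the conjugate residue structure. Integrating over $\mathbb R$ then forces $M_0\equiv0$. Alternatively, one can note that the Basic RHP is uniquely solvable, being generated by a Schwartz-class reflectionless potential, and that the ansatz exhausts all solutions of the required form. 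The linear system therefore has a unique solution, because any of its solutions would define a solution of the RHP.
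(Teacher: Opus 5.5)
Your argument is correct and follows the paper's route, written out in more detail: Liouville gives the partial-fraction form, the residue structure plus the $\sigma_3$ and $\sigma_2$ symmetries force the ansatz \eqref{eq:soliton-ansatz}, substitution into the residue conditions gives \eqref{eq:finite-algebraic-system}, and the $(1,2)$ residues give $u_N=-4i\sum_j\bar q_j$. The sign check you hedge on goes through with no adjustment of $c_j$, since the $\bar z_j$ equation is exactly the complex conjugate of the $z_j$ equation.

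Your solvability paragraph is an addition; the paper does not address solvability at all. If you keep it, use $M_0(z)M_0(\bar z)^{\dagger}$, not $M_0(z)M_0^*(\bar z)^{\dagger}$, which in the paper's notation is just $M_0(z)M_0(z)^{\mathsf T}$. With the correct combination, the singularities at $z_j$ and $-\bar z_j$ are removable, because the two terms of each residue cancel by the residue conditions, so its integral over $\mathbb R$ vanishes.
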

	
	\begin{proof}
		The representation \eqref{eq:soliton-ansatz} is the most general rational ansatz compatible with the pole set, the normalization, and the two symmetries in \eqref{eq:basic-symmetry}.  Substitution into the residue conditions \eqref{eq:basic-residues} gives \eqref{eq:finite-algebraic-system}.  Expanding \eqref{eq:soliton-ansatz} at infinity and using \eqref{eq:reconstruction-basic} gives \eqref{eq:N-soliton-reconstruction}.
	\end{proof}
	In particular, when \(N=1\), the pure soliton formula (\ref{eq:N-soliton-reconstruction}) reduces to the one-soliton solution
	\[
	u_1(x,t)
	=
	\frac{4\eta\omega}{\eta+i\omega}\,
	\operatorname{sech}(\Xi)\,
	\exp\left[-i\left(2\psi+\phi+\frac{\pi}{2}\right)\right],
	\]
	where \(z=\eta+i\omega\) with \(\eta,\omega\in\mathbb R\), and \(\phi\in\mathbb R\) is the phase parameter. Here
	\[
	\psi
	=
	(\eta^2-\omega^2)x
	+
	2(\eta^4-6\eta^2\omega^2+\omega^4)t,
	\quad
	\Xi
	=
	-4\eta\omega\bigl[x+4(\eta^2-\omega^2)t\bigr]
	+
	\ln \left(\frac{|c_1|(\eta+i\omega)}{2\eta\omega}\right).
	\]
	The center of the soliton is determined by \(\Xi=\mathrm{constant}\), and hence its propagation velocity is
	\begin{equation}\label{eq:free-soliton-velocity-a}
		v_{\mathrm{sol}}
		=
		-4(\eta^2-\omega^2)
		=
		-4\operatorname{Re}(z^2).
	\end{equation}

	\subsection{From pole removal to the continuum \texorpdfstring{$\bar\partial$}{dbar}-problem}
	\label{subsec:pole-removal}
	
	We now convert the reflectionless meromorphic RHP into a pole-free RHP on fixed enclosing contours.  Since $N$ will vary, the discrete data are denoted by
	\[
	\{z_{j,N},c_{j,N}\}_{j=1}^N,
	\]
	although the subscript $N$ will be suppressed when no confusion is possible.
	Let $\gamma_+$ be a simple positively oriented contour enclosing the poles $\{z_j\}_{j=1}^N$ in the first quadrant.  Set
	\[
	\gamma_-:=-\gamma_+,\qquad
	\gamma_+^*:=\overline{\gamma_+},\qquad
	\gamma_-^*:=-\gamma_+^*,
	\]
	each with the positive orientation with respect to its bounded interior.  We define the pole-removal contour
	\[
	\Gamma_{\rm p}:=\gamma_+\cup\gamma_-\cup\gamma_+^*\cup\gamma_-^* .
	\]
	On $\gamma_+$ the symbol $z\mp z_j$ means $z-z_j$, while on $\gamma_-$ it means $z+z_j$; the same convention is used for $z\mp\bar z_j$.
	
	Define the triangular matrix $T^{(N)}(x,t,z)$ by
	\[
	T^{(N)}(x,t,z)=
	\begin{cases}
		\begin{pmatrix}
			1&0\\[1mm]
			-\displaystyle\sum_{j=1}^N
			\frac{c_{j,N}e^{2i\theta(x,t,z_j)}}{z\mp z_j}
			&1
		\end{pmatrix},
		& z \text{ inside } \gamma_\pm,\\[7mm]
		\begin{pmatrix}
			1&
			\displaystyle\sum_{j=1}^N
			\frac{\bar c_{j,N}e^{-2i\theta(x,t,\bar z_j)}}{z\mp\bar z_j}\\[1mm]
			0&1
		\end{pmatrix},
		& z \text{ inside } \gamma_\pm^*,\\[7mm]
		\mathbb I,
		& z \text{ outside all four contours}.
	\end{cases}
	\]
	If $M^{(N)}(x,t,z)$ denotes the reflectionless meromorphic solution, then
	\[
	M_{\rm p}^{(N)}(x,t,z):=M^{(N)}(x,t,z)T^{(N)}(x,t,z)
	\]
	is analytic in $\mathbb C\setminus\Gamma_{\rm p}$ and satisfies
	\[
	M_{{\rm p},+}^{(N)}(x,t,z)=M_{{\rm p},-}^{(N)}(x,t,z)J_{\rm p}^{(N)}(x,t,z),
	\qquad z\in\Gamma_{\rm p},
	\]
	where
	\begin{equation}\label{discrete-jump}
		J_{\rm p}^{(N)}(x,t,z)=\mathbb I+W_N(x,t,z),
	\end{equation}
	and
	\begin{equation}
		\label{eq:WN}
		W_N(x,t,z)=
		\begin{pmatrix}
			0&
			\displaystyle
			\sum_{j=1}^N
			\frac{\bar c_{j,N}e^{-2i\theta(x,t,\bar z_j)}}{z\mp\bar z_j}
			\chi_{\gamma_\pm^*}(z)
			\\[5mm]
			\displaystyle
			-\sum_{j=1}^N
			\frac{c_{j,N}e^{2i\theta(x,t,z_j)}}{z\mp z_j}
			\chi_{\gamma_\pm}(z)
			&0
		\end{pmatrix}.
	\end{equation}
	
	We now specify the condensation regime in the limit $N\to\infty$.  For each $N$, let
	$
	\{z_{j,N}\}_{j=1}^N\subset \mathbb C_{\mathrm I}
	$
	be the discrete spectrum in the first quadrant.  We assume that, as $N\to\infty$, these points condense onto a bounded simply connected domain $\mathcal D_+$ satisfying
	$
	\operatorname{cl}(\mathcal D_+)\Subset \mathbb C_{\mathrm I}.
	$
	The remaining condensation domains are generated by the spectral symmetries:
	\[
	\mathcal D_-:=-\mathcal D_+,\qquad
	\mathcal D_+^*:=\overline{\mathcal D_+},\qquad
	\mathcal D_-^*:=-\mathcal D_+^* .
	\]
	Let $\mathcal A:=|\mathcal D_+|$ be the area of $\mathcal D_+$.  The pole-removal contours are chosen independently of $N$ so that, if $D_\gamma$ denotes the bounded interior of a contour $\gamma$, then
	\[
	\overline{\mathcal D_\pm}\Subset D_{\gamma_\pm},
	\qquad
	\overline{\mathcal D_\pm^*}\Subset D_{\gamma_\pm^*}.
	\]
	Thus the domains stay a positive distance away from the pole-removal contours.  In particular, for all sufficiently large $N$, the discrete spectra $\{z_{j,N}\}$, together with their symmetry images, are enclosed by the corresponding contours.

	\begin{assumption}\label{ass:spectral-condensation}
		The empirical measures
		$
		\mu_N:=\frac1N\sum_{j=1}^N\delta_{z_{j,N}}
		$
		converge weakly to the normalized area measure
		$
		\mu:=\frac{\chi_{\mathcal D_+}(\lambda)}{\mathcal A}d^2\lambda .
		$
		The norming constants satisfy
		\begin{equation}
			\label{eq:norming-scaling}
			c_{j,N}=\frac{\mathcal A}{\pi N}f(z_{j,N},\bar z_{j,N}),
			\qquad j=1,\ldots,N,
		\end{equation}
		where the function $f\in C^\alpha(\operatorname{cl}(\mathcal D_+))$ for some $0<\alpha<1$.  The density function can be extended to the symmetry-related domains by
		\[
		f(-z,-\bar z)=f(z,\bar z),
		\qquad
		f^*(z,\bar z):=\overline{f(\bar z,z)}
		\quad\text{on }\mathcal D_+^*\cup\mathcal D_-^* .
		\]
	\end{assumption}
	
	\begin{lemma}
		\label{lem:Cauchy-sum-limit}
		Let $K\Subset\mathbb R^2$ be compact.  Under Assumption~\ref{ass:spectral-condensation}, for every compact set $V\Subset\mathbb C\setminus\operatorname{cl}(\mathcal D_+)$, one has
		\[
		\sum_{j=1}^N
		\frac{c_{j,N}e^{2i\theta(x,t,z_{j,N})}}{z-z_{j,N}}
		\longrightarrow
		\frac1\pi
		\iint_{\mathcal D_+}
		\frac{f(\lambda,\bar\lambda)e^{2i\theta(x,t,\lambda)}}{z-\lambda}d^2\lambda
		\]
		uniformly for $(x,t,z)\in K\times V$.  The analogous convergence holds in the other three symmetry-related domains.
	\end{lemma}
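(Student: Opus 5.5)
Substituting the scaling \eqref{eq:norming-scaling} rewrites the sum as an integral against the empirical measure:
\[
\sum_{j=1}^N\frac{c_{j,N}e^{2i\theta(x,t,z_{j,N})}}{z-z_{j,N}}
=\frac{\mathcal A}{\pi}\int \frac{f(\lambda,\bar\lambda)e^{2i\theta(x,t,\lambda)}}{z-\lambda}\,d\mu_N(\lambda).
\]
The target is the same expression with $\mu_N$ replaced by $\mu$, since $\mathcal A\,d\mu=\chi_{\mathcal D_+}d^2\lambda$. So the lemma reduces to weak convergence $\mu_N\to\mu$ tested against the family
\[
\Phi_{x,t,z}(\lambda)=\frac{f(\lambda,\bar\lambda)e^{2i\theta(x,t,\lambda)}}{z-\lambda},\qquad (x,t,z)\in K\times V,
\]
together with an upgrade from pointwise to uniform convergence in the parameters.

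\textbf{Step 1: setting up the test functions.} The measures $\mu_N$ are supported on the $z_{j,N}$. These need not lie in $\operatorname{cl}(\mathcal D_+)$, but for large $N$ they lie in a fixed compact neighbourhood $U\Subset D_{\gamma_+}$ of $\operatorname{cl}(\mathcal D_+)$. I shrink $U$ so that $\operatorname{dist}(U,V)\ge\delta>0$; this is possible because $V$ is compact and disjoint from $\operatorname{cl}(\mathcal D_+)$. Strictly, this uses the standing assumption that the points are eventually enclosed near $\mathcal D_+$; weak convergence alone only forces the mass outside $U$ to vanish, and since each atom has weight $1/N$, the boundedness of the summand handles such stray points in any case.

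\textbf{Step 2: pointwise convergence.} I extend $f$ to a bounded continuous function on $U$, for instance by Tietze's theorem; the Hölder regularity is more than is needed here. Then for fixed $(x,t,z)$ the function $\Phi_{x,t,z}$ is bounded and continuous on $U$, because $|z-\lambda|\ge\delta$ and $\theta(x,t,\lambda)=\lambda^2x+2\lambda^4t$ is a polynomial. Weak convergence of $\mu_N$ therefore gives pointwise convergence for each fixed $(x,t,z)$. The choice of extension does not matter, because $\mu$ is supported on $\operatorname{cl}(\mathcal D_+)$ and the boundary $\partial\mathcal D_+$ has $\mu$-measure zero.

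\textbf{Step 3: uniformity.} The family $\{\Phi_{x,t,z}\}$ is uniformly bounded on $U$ by $C=\sup_U|f|\,e^{2\sup|\operatorname{Im}\theta|}/\delta$. It is also uniformly Lipschitz in the parameters $(x,t,z)\in K\times V$, uniformly in $\lambda\in U$, because every ingredient is smooth on the compact set $K\times V\times U$. Hence
\[
\Bigl|\int\Phi_{p}\,d(\mu_N-\mu)-\int\Phi_{p'}\,d(\mu_N-\mu)\Bigr|\le 2L|p-p'|,
\]
using that both measures have total mass at most one. Given $\varepsilon>0$, I cover $K\times V$ by a finite $\varepsilon/(4L)$-net, apply Step 2 at the finitely many net points, and conclude by the triangle inequality. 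This is the standard argument that weak convergence is uniform over an equicontinuous, uniformly bounded family.

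\textbf{Symmetric domains.} For the other three domains the same argument applies to the relevant sums. These are $\sum_j c_{j,N}e^{2i\theta(z_j)}/(z+z_j)$ and the conjugate sums in $\bar c_{j,N}e^{-2i\theta(\bar z_j)}/(z\mp\bar z_j)$. Each is handled by pushing $\mu_N$ forward under $\lambda\mapsto-\lambda$ or $\lambda\mapsto\bar\lambda$, and pushforward preserves weak convergence. The extension rules for $f$ in Assumption~\ref{ass:spectral-condensation} then identify the limits with the integrals of $f$ and $f^*$ over $\mathcal D_-$, $\mathcal D_+^*$ and $\mathcal D_-^*$.

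\textbf{Main obstacle.} The only delicate point is ensuring that the test functions are genuinely bounded and continuous on a set carrying the mass of $\mu_N$. This is exactly the separation requirement $V\cap\operatorname{cl}(\mathcal D_+)=\emptyset$ together with eventual localisation of the discrete spectrum near $\mathcal D_+$, which I would state and use explicitly in Step 1.
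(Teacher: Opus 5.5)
Your proof is correct and is essentially the paper's argument: you rewrite the sum as $\frac{\mathcal A}{\pi}\int\Phi_{x,t,z}\,d\mu_N$, test the weak convergence against a uniformly bounded, equicontinuous family to get uniformity (the paper cites compactness of this family in $C(\operatorname{cl}\mathcal D_+)$, while you run the equivalent finite-net argument), and transfer to the other three domains via $\lambda\mapsto-\lambda$ and $\lambda\mapsto\bar\lambda$. Your Step 1 detour is unnecessary. Because $c_{j,N}$ is defined through $f(z_{j,N},\bar z_{j,N})$ with $f$ given only on $\operatorname{cl}(\mathcal D_+)$, the measures $\mu_N$ are already supported on $\operatorname{cl}(\mathcal D_+)$, which lies at positive distance from $V$. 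If you genuinely allowed points outside, neither the contour-enclosure assumption nor weak convergence would let you shrink $U$ away from $V$. Your remark that bounded summands absorb stray points would also fail: a stray $z_{j,N}$ near $V$ makes the sum blow up there.
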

	
	\begin{proof}
		For $(x,t,z)\in K\times V$, the functions
		$
		\lambda\mapsto
		\frac{f(\lambda,\bar\lambda)e^{2i\theta(x,t,\lambda)}}{z-\lambda}
		$
		form a bounded and equicontinuous family on $\operatorname{cl}(\mathcal D_+)$.  By \eqref{eq:norming-scaling}, we have
		\[
		\sum_{j=1}^N
		\frac{c_{j,N}e^{2i\theta(x,t,z_{j,N})}}{z-z_{j,N}}
		=
		\frac{\mathcal A}{\pi}
		\int_{\mathbb C}
		\frac{f(\lambda,\bar\lambda)e^{2i\theta(x,t,\lambda)}}{z-\lambda}d\mu_N(\lambda).
		\]
		The weak convergence $\mu_N\rightharpoonup\mu$, together with compactness of the above family in $C(\operatorname{cl}\mathcal D_+)$, gives the claimed locally uniform convergence.  The remaining cases follow by the transformations $z\mapsto -z$ and $z\mapsto\bar z$.
	\end{proof}
	
	By Lemma~\ref{lem:Cauchy-sum-limit}, the jump matrices \eqref{discrete-jump} on the fixed contour $\Gamma_{\rm p}$ converge to
	\[
	J_{\rm p}^{(\infty)}(x,t,z)=\mathbb I+W(x,t,z),
	\]
	where
	\begin{equation}
		\label{eq:W-limit}
		W(x,t,z)=
		\begin{pmatrix}
			0&
			\displaystyle
			\frac{\chi_{\gamma_\pm^*}(z)}{\pi}
			\iint_{\mathcal D_\pm^*}
			\frac{f^*(\lambda,\bar\lambda)e^{-2i\theta(x,t,\lambda)}}{z-\lambda}
			d^2\lambda 
			\\[3mm]
			\displaystyle
			-\frac{\chi_{\gamma_\pm}(z)}{\pi}
			\iint_{\mathcal D_\pm}
			\frac{f(\lambda,\bar\lambda)e^{2i\theta(x,t,\lambda)}}{z-\lambda}
			d^2\lambda
			&0
		\end{pmatrix}.
	\end{equation}
	Let $M_{\rm p}^{(\infty)}$ denote the solution of the limiting RHP
	\[
	M_{{\rm p},+}^{(\infty)}=
	M_{{\rm p},-}^{(\infty)}
	(\mathbb I+W),
	\qquad
	M_{\rm p}^{(\infty)}(z)=\mathbb I+\mathcal{O}(z^{-1}),
	\qquad z\to\infty.
	\]
	The jump conditions can be removed in the limit.  Define
	\[
	T^{(\infty)}(x,t,z)=
	\begin{cases}
		\begin{pmatrix}
			1&0\\[2mm]
			\displaystyle
			\frac1\pi
			\iint_{\mathcal D_\pm}
			\frac{f(\lambda,\bar\lambda)e^{2i\theta(x,t,\lambda)}}{z-\lambda}d^2\lambda
			&1
		\end{pmatrix},
		& z \text{ inside } \gamma_\pm,\\[7mm]
		\begin{pmatrix}
			1&
			\displaystyle
			-\frac1\pi
			\iint_{\mathcal D_\pm^*}
			\frac{f^*(\lambda,\bar\lambda)e^{-2i\theta(x,t,\lambda)}}{z-\lambda}d^2\lambda
			\\[2mm]
			0&1
		\end{pmatrix},
		& z \text{ inside } \gamma_\pm^*,\\[7mm]
		\mathbb I,
		& z \text{ outside all four contours}.
	\end{cases}
	\]
	Then $T^{(\infty)}$ has the inverse jump to $\mathbb I+W$ on $\Gamma_{\rm p}$, and therefore
	\[
	M_2(x,t,z):=M_{\rm p}^{(\infty)}(x,t,z)T^{(\infty)}(x,t,z)
	\]
	has no jump across $\Gamma_{\rm p}$.  However, it is no longer analytic.  Using the Cauchy--Green identity, one obtains
	\[
	\partial_{\bar z}T^{(\infty)}(x,t,z)=T^{(\infty)}(x,t,z)N(x,t,z),
	\]
	where
	\begin{equation}
		\label{eq:dbar-matrix-N}
		N(x,t,z)=
		\begin{pmatrix}
			0&
			-f^*(z,\bar z)e^{-2i\theta(x,t,z)}
			\chi_{\mathcal D_\pm^*}(z)
			\\[1mm]
			f(z,\bar z)e^{2i\theta(x,t,z)}
			\chi_{\mathcal D_\pm}(z)
			&0
		\end{pmatrix}.
	\end{equation}
	Thus $M_2(x,t,z)$ satisfies the following compactly supported $\bar\partial$-problem.
	
	\begin{parproblem}
		\label{prob:dbar}
		Find a $2\times2$ matrix-valued function $M_2(x,t,z)$ such that
		\begin{equation}
			\label{eq:dbar-problem}
			\partial_{\bar z}M_2(x,t,z)=M_2(x,t,z)N(x,t,z),
		\end{equation}
		with $N(x,t,z)$ given by \eqref{eq:dbar-matrix-N}, and such that
		\[
		M_2(x,t,z)=\mathbb I+\mathcal{O}(z^{-1}),
		\qquad z\to\infty.
		\]
		Moreover, we have
		\[
		M_2(x,t,z)=\sigma_2M_2^*(x,t,z)\sigma_2,
		\qquad
		M_2(x,t,z)=\sigma_3M_2(x,t,-z)\sigma_3 .
		\]
	\end{parproblem}
	
	The continuum potential is reconstructed by
	\begin{equation}
		\label{eq:reconstruction-dbar}
		u(x,t)=2i\lim_{z\to\infty}z[M_2(x,t,z)]_{12}.
	\end{equation}
	
	\begin{theorem}
		\label{thm:finite-to-dbar}
		Let $K\Subset\mathbb R^2$ be compact.  Assume that Assumption~\ref{ass:spectral-condensation} holds and that
		\[
		\varepsilon_N(K):=
		\sup_{(x,t)\in K}
		\|W_N(x,t,\cdot)-W(x,t,\cdot)\|_{L^2(\Gamma_{\rm p})\cap L^\infty(\Gamma_{\rm p})}
		\longrightarrow0 .
		\]
		For each $(x,t)\in K$, let
		$
		\mathcal C_{W(x,t)}h:=\mathcal C_-\bigl(hW(x,t,\cdot)\bigr)
		$
		be the Beals--Coifman operator on $L^2(\Gamma_{\rm p})$.  Suppose that
		\[
		\sup_{(x,t)\in K}
		\bigl\|(\mathbb I-\mathcal C_{W(x,t)})^{-1}\bigr\|_{L^2(\Gamma_{\rm p})\to L^2(\Gamma_{\rm p})}
		<\infty .
		\]
		Then, for all sufficiently large $N$, the finite-$N$ pole-removal RHP is uniquely solvable.  Moreover, if $V\Subset\mathbb C\setminus\Gamma_{\rm p}$, then
		\[
		\sup_{(x,t)\in K}\sup_{z\in V}
		\bigl\|
		M_{\rm p}^{(N)}(x,t,z)-M_{\rm p}^{(\infty)}(x,t,z)
		\bigr\|
		=
		\mathcal{O}(\varepsilon_N(K)).
		\]
		The function
		$
		M_2=M_{\rm p}^{(\infty)}T^{(\infty)}
		$
		solves $\bar{\partial}$-problem~\ref{prob:dbar}.  Consequently, with
		\[
		u_N(x,t):=2i\lim_{z\to\infty}z[M_{\rm p}^{(N)}(x,t,z)]_{12},
		\qquad
		u(x,t):=2i\lim_{z\to\infty}z[M_2(x,t,z)]_{12},
		\]
		one has
		$
		u_N\to u
		$
		uniformly on $K$.
	\end{theorem}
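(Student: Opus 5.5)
The plan is to use the standard Beals--Coifman small-norm perturbation argument on the fixed contour $\Gamma_{\rm p}$, and then undo the pole removal through $T^{(\infty)}$.

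\textbf{Step 1: solvability and $L^2$ convergence.} For each $(x,t)\in K$, write the finite-$N$ and limiting problems as singular integral equations
\[
\mu_N=\mathbb I+\mathcal C_{W_N}\mu_N,\qquad \mu_\infty=\mathbb I+\mathcal C_{W}\mu_\infty,
\]
posed in $\mathbb I+L^2(\Gamma_{\rm p})$. Since $\mathcal C_-$ is bounded on $L^2(\Gamma_{\rm p})$ (the contour is a finite union of smooth closed curves), we have
\[
\|\mathcal C_{W_N}-\mathcal C_W\|_{L^2\to L^2}\le \|\mathcal C_-\|\,\|W_N-W\|_{L^\infty}\le C\varepsilon_N(K).
\]
By the assumed uniform bound on $(\mathbb I-\mathcal C_W)^{-1}$ and a Neumann series, $\mathbb I-\mathcal C_{W_N}$ is invertible for large $N$. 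Its inverse is bounded uniformly on $K$, and it differs from $(\mathbb I-\mathcal C_W)^{-1}$ by $\mathcal O(\varepsilon_N)$ in operator norm. Writing $\mu-\mathbb I$ as the unknown, the right-hand sides are $\mathcal C_-(W_N)$ and $\mathcal C_-(W)$, which differ by $\mathcal O(\|W_N-W\|_{L^2})$. Hence
\[
\|\mu_N-\mu_\infty\|_{L^2}=\mathcal O(\varepsilon_N),\qquad \|\mu_\infty-\mathbb I\|_{L^2}\ \text{bounded uniformly on } K.
\]
Uniqueness of the finite-$N$ RHP then follows from the standard Liouville argument: $\det J_{\rm p}^{(N)}=1$ because $W_N$ is off-diagonal nilpotent in each component, so a solution is equivalent to an $L^2$ solution of the integral equation, which is unique.

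\textbf{Step 2: local uniform estimates in $z$ and the potential.} Represent
\[
M_{\rm p}(z)=\mathbb I+\frac1{2\pi i}\int_{\Gamma_{\rm p}}\frac{\mu(s)W(s)}{s-z}\,ds .
\]
For $z\in V\Subset\mathbb C\setminus\Gamma_{\rm p}$ the Cauchy kernel is bounded in $L^2(\Gamma_{\rm p})$, uniformly in $z$. Decompose
\[
\mu_NW_N-\mu_\infty W=(\mu_N-\mu_\infty)W_N+\mu_\infty(W_N-W).
\]
Cauchy--Schwarz, together with the $L^2\cap L^\infty$ bounds on $W_N$ and $W$ and $\Gamma_{\rm p}$ having finite length, then gives the $\mathcal O(\varepsilon_N(K))$ estimate. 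The same decomposition applied to the residue at infinity,
\[
\lim z[M_{\rm p}]_{12}=-\frac1{2\pi i}\int_{\Gamma_{\rm p}}[\mu W]_{12}\,ds,
\]
gives $u_N\to u$ uniformly on $K$. This uses that $T^{(N)}=\mathbb I$ near infinity, so $u_N$ computed from $M_{\rm p}^{(N)}$ coincides with the soliton formula of Proposition~\ref{prop:pure-N-soliton}. It also uses $T^{(\infty)}=\mathbb I$ outside the contours, so $\lim z[M_2]_{12}=\lim z[M_{\rm p}^{(\infty)}]_{12}$.

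\textbf{Step 3: $M_2$ solves the $\bar\partial$-problem.}
\begin{enumerate}
\item \emph{No jump.} On $\gamma_\pm$ one checks $T^{(\infty)}_-=(\mathbb I+W)^{-1}T^{(\infty)}_+$, with the orientation conventions giving the sign. Hence $M_2$ is continuous across $\Gamma_{\rm p}$.
\item \emph{The $\bar\partial$-equation.} $M_{\rm p}^{(\infty)}$ is analytic off $\Gamma_{\rm p}$. The Cauchy--Green identity $\partial_{\bar z}\frac1\pi\iint\frac{g(\lambda)}{z-\lambda}\,d^2\lambda=g(z)$ holds for $g\in C^\alpha$ times a characteristic function, in the distributional sense and classically off $\partial\mathcal D_\pm$. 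Together with the nilpotency of the off-diagonal entries, this gives $\partial_{\bar z}M_2=M_2N$, since $\partial_{\bar z}T=TN$ and $T$ is unipotent triangular.
\item \emph{Removability.} A continuous function satisfying the equation in the distributional sense away from a rectifiable contour has no hidden singular part there, by Morera's theorem applied to $M_2\,(T^{(\infty)})^{-1}$ in pieces.
\item \emph{Normalization and symmetries.} The normalization is inherited from $M_{\rm p}^{(\infty)}$. The symmetries follow from uniqueness: $\sigma_2M_2^*\sigma_2$ and $\sigma_3M_2(-z)\sigma_3$ solve the same problem, because $N$ has the matching symmetry under the extension of $f$ in Assumption~\ref{ass:spectral-condensation}. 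Uniqueness of the $\bar\partial$-problem follows from uniqueness of the limiting RHP, given the invertibility of $\mathbb I-\mathcal C_W$.
\end{enumerate}

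\textbf{Main obstacle.} I expect the delicate step to be the rigorous justification of the $\bar\partial$ identity across $\partial\mathcal D_\pm$, where $N$ is discontinuous, and the equivalence of uniqueness between the $\bar\partial$-problem and the RHP. I would handle both by working with the continuous function $T^{(\infty)}$: it is $C^{1+\alpha'}$ away from $\partial\mathcal D_\pm$ and Lipschitz/log-Lipschitz globally. The identity would be established in $W^{1,p}_{\rm loc}$ via the Cauchy transform's mapping properties $L^p\to W^{1,p}$.
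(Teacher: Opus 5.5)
Your proposal is correct and follows the paper's proof. Both use the Neumann perturbation of $\mathbb I-\mathcal C_W$, the same splitting $(\mu_N-\mu)W_N+\mu(W_N-W)$ in the Cauchy representation and in the $z^{-1}$ coefficient, and the fact that $T^{(\infty)}=\mathbb I$ near infinity to identify $u$. Your extra points (solving for $\mu-\mathbb I\in L^2$ explicitly, the Liouville uniqueness argument, removability across $\Gamma_{\rm p}$, the distributional $\bar\partial$ identity across $\partial\mathcal D_\pm$, and the symmetries of $M_2$ via uniqueness) supply details that the paper only asserts.
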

	
	\begin{proof}
		After pole removal, the finite inverse problem has jump $\mathbb I+W_N$ on $\Gamma_{\rm p}$, while the limiting contour problem has jump $\mathbb I+W$. By assumption,
		\[
		W_N\to W
		\quad\text{in }L^2(\Gamma_{\rm p})\cap L^\infty(\Gamma_{\rm p}),
		\]
		uniformly for $(x,t)\in K$. Since $\mathcal C_-$ is bounded on $L^2(\Gamma_{\rm p})$, we have
		\[
		\|\mathcal C_{W_N}-\mathcal C_W\|_{L^2(\Gamma_{\rm p})\to L^2(\Gamma_{\rm p})}
		=
		\mathcal O(\varepsilon_N(K)).
		\]
		The uniform invertibility of $\mathbb I-\mathcal C_W$ and the Neumann perturbation theorem imply that
		$\mathbb I-\mathcal C_{W_N}$ is invertible for all sufficiently large $N$, uniformly for $(x,t)\in K$.
		
		Let $\mu_N$ and $\mu$ be the corresponding Beals--Coifman densities:
		\[
		(\mathbb I-\mathcal C_{W_N})\mu_N=\mathbb I,
		\qquad
		(\mathbb I-\mathcal C_W)\mu=\mathbb I .
		\]
		By the resolvent identity, one has
		\[
		\mu_N-\mu
		=
		(\mathbb I-\mathcal C_{W_N})^{-1}
		(\mathcal C_{W_N}-\mathcal C_W)\mu .
		\]
		Hence
		\[
		\sup_{(x,t)\in K}
		\|\mu_N-\mu\|_{L^2(\Gamma_{\rm p})}
		=
		\mathcal O(\varepsilon_N(K)).
		\]
		
		Using the Cauchy representations
		\[
		M_{\rm p}^{(N)}(z)
		=
		\mathbb I+\frac1{2\pi i}
		\int_{\Gamma_{\rm p}}
		\frac{\mu_N(s)W_N(s)}{s-z}\,ds,
		\qquad
		M_{\rm p}^{(\infty)}(z)
		=
		\mathbb I+\frac1{2\pi i}
		\int_{\Gamma_{\rm p}}
		\frac{\mu(s)W(s)}{s-z}\,ds,
		\]
		we get
		\[
		M_{\rm p}^{(N)}(z)-M_{\rm p}^{(\infty)}(z)
		=
		\frac1{2\pi i}
		\int_{\Gamma_{\rm p}}
		\frac{(\mu_N(s)-\mu(s))W_N(s)+\mu(s)(W_N(s)-W(s))}{s-z}\,ds .
		\]
		If $V\Subset \mathbb C\setminus\Gamma_{\rm p}$, then $\operatorname{dist}(V,\Gamma_{\rm p})>0$. By the Cauchy--Schwarz inequality, the uniform bounds on $\mu$, $\mu_N$, and $W_N$, and the estimates above, it follows that
		\[
		\sup_{(x,t)\in K}\sup_{z\in V}
		\left\|
		M_{\rm p}^{(N)}(x,t,z)-M_{\rm p}^{(\infty)}(x,t,z)
		\right\|
		=
		\mathcal O(\varepsilon_N(K)).
		\]
		
		By construction, $T^{(\infty)}$ has the inverse jump to $\mathbb I+W$ on $\Gamma_{\rm p}$. Thus
		$
		M_2=M_{\rm p}^{(\infty)}T^{(\infty)}
		$
		has no jump across $\Gamma_{\rm p}$. Moreover,
		$
		\partial_{\bar z}M_2
		=
		M_{\rm p}^{(\infty)}\partial_{\bar z}T^{(\infty)}
		=
		M_{\rm p}^{(\infty)}T^{(\infty)}N
		=
		M_2N.
		$
		The normalization follows since $T^{(\infty)}=\mathbb I$ near infinity. Therefore $M_2$ solves $\bar{\partial}$-problem~\ref{prob:dbar}.
		
		It remains to prove the convergence of the reconstructed potentials. Expanding the Cauchy representations at infinity gives
		\[
		M_{\rm p}^{(N)}(z)
		=
		\mathbb I+\frac{M_{{\rm p},1}^{(N)}}{z}
		+\mathcal O(z^{-2}),
		\qquad
		M_{\rm p}^{(\infty)}(z)
		=
		\mathbb I+\frac{M_{{\rm p},1}^{(\infty)}}{z}
		+\mathcal O(z^{-2}),
		\]
		thus
		\[
		M_{{\rm p},1}^{(N)}-M_{{\rm p},1}^{(\infty)}=
		-\frac1{2\pi i}
		\int_{\Gamma_{\rm p}}
		\bigl((\mu_N(s)-\mu(s))W_N(s)+\mu(s)(W_N(s)-W(s))\bigr)\,ds .
		\]
		Again by the Cauchy--Schwarz inequality, we have
		\[
		\sup_{(x,t)\in K}
		\left\|
		M_{{\rm p},1}^{(N)}(x,t)
		-
		M_{{\rm p},1}^{(\infty)}(x,t)
		\right\|
		=
		\mathcal O(\varepsilon_N(K)).
		\]
		Since $T^{(\infty)}(z)=\mathbb I$ in a neighbourhood of $z=\infty$, the functions
		$M_2=M_{\rm p}^{(\infty)}T^{(\infty)}$ and $M_{\rm p}^{(\infty)}$ have the same $z^{-1}$ coefficient at infinity. Since the pole-removal transformations are also identity near infinity, the reconstruction formulas are
		\[
		u_N(x,t)
		=
		2i\,[M_{{\rm p},1}^{(N)}(x,t)]_{12},
		\qquad
		u(x,t)
		=
		2i\,[M_{{\rm p},1}^{(\infty)}(x,t)]_{12}.
		\]
		Then we conclude that
		$
		u_N(x,t)\to u(x,t)
		$
		uniformly on $K$.
	\end{proof}

	\subsection{Schwarz function reduction to the mother body}
	\label{subsec:schwarz-function}
	
	We now describe the reduction of the exterior large-$z$ expansion of $\bar{\partial}$-problem~\ref{prob:dbar} to a RHP. 
	For the Schwarz function reduction we impose the following additional regularity: in the upper domains $\mathcal D_\pm$, the density function $f$ is the restriction of a function holomorphic in a neighbourhood of $\operatorname{cl}(\mathcal D_\pm)$; in the conjugate domains the density is given by Schwarz conjugation.
	
	Write
	$
	M_2(x,t,z)=\bigl(M_{2,1}(x,t,z),M_{2,2}(x,t,z)\bigr)
	$
	in terms of its column vectors.  From \eqref{eq:dbar-problem} we obtain
	\begin{equation}
		\label{eq:dbar-decoupled}
		\begin{aligned}
			\partial_{\bar z}M_{2,1}
			&=f(z)e^{2i\theta(x,t,z)}M_{2,2},
			&
			\partial_{\bar z}M_{2,2}&=0,
			&& z\in\mathcal D_\pm,\\
			\partial_{\bar z}M_{2,1}
			&=0,
			&
			\partial_{\bar z}M_{2,2}
			&=-f^*(z)e^{-2i\theta(x,t,z)}M_{2,1},
			&& z\in\mathcal D_\pm^* .
		\end{aligned}
	\end{equation}
	The Cauchy--Green formula gives, for exterior $z$,
	\begin{equation}
		\label{eq:columns-CG}
		\begin{aligned}
			M_{2,1}(x,t,z)
			&=
			\begin{pmatrix}1\\0\end{pmatrix}
			-
			\iint_{\mathcal D_\pm}
			\frac{
				M_{2,2}(x,t,\lambda)
				f(\lambda)e^{2i\theta(x,t,\lambda)}
			}{\lambda-z}
			\frac{d\bar\lambda\wedge d\lambda}{2\pi i},
			\\[1mm]
			M_{2,2}(x,t,z)
			&=
			\begin{pmatrix}0\\1\end{pmatrix}
			+
			\iint_{\mathcal D_\pm^*}
			\frac{
				M_{2,1}(x,t,\lambda)
				f^*(\lambda)e^{-2i\theta(x,t,\lambda)}
			}{\lambda-z}
			\frac{d\bar\lambda\wedge d\lambda}{2\pi i}.
		\end{aligned}
	\end{equation}
	Assume that $\mathcal D_+$ is a simply connected quadrature domain whose boundary admits a Schwarz function $\mathcal S(z)$:
	\[
	\bar z=\mathcal S(z),\qquad z\in\partial\mathcal D_+ .
	\]
	Let $\mathscr L_+$ be the singularity set of the analytic continuation of $\mathcal S$ inside $\mathcal D_+$, called the mother body.  Set
	\[
	\mathscr L_-:=-\mathscr L_+,\qquad
	\mathscr L_+^*:=\overline{\mathscr L_+},\qquad
	\mathscr L_-^*:=-\mathscr L_+^*,
	\]
	and
	$
	\Sigma_{\rm mb}:=
	\mathscr L_+\cup\mathscr L_-\cup\mathscr L_+^*\cup\mathscr L_-^* .
	$
	
	Let
	$
	\Delta\mathcal S(z):=\mathcal S_-(z)-\mathcal S_+(z)
	$
	denote the jump of the Schwarz function across the oriented mother body. Define the density function on the mother body by
	\begin{equation}
		\label{eq:mother-density}
		\varrho(z):=f(z)\Delta\mathcal S(z),
		\qquad z\in\mathscr L_+\cup\mathscr L_-,
	\end{equation}
	and define $\varrho^*$ on $\mathscr L_+^*\cup\mathscr L_-^*$ by Schwarz conjugation.
	
	\begin{prop}
		\label{prop:mother-body-reduction}
		Under the assumptions above, the coefficient of $z^{-1}$ in the exterior large-$z$ expansion of the solution of $\bar{\partial}$-problem~\ref{prob:dbar} coincides with the coefficient of $z^{-1}$ generated by the following RHP on $\Sigma_{\rm mb}$:
		\[
		\widetilde m_+(x,t,z)
		=
		\widetilde m_-(x,t,z)\widetilde J(x,t,z),
		\qquad
		\widetilde m(x,t,z)=\mathbb I+O(z^{-1}),\quad z\to\infty,
		\]
		where
		\begin{equation}
			\label{eq:mother-jump}
			\widetilde J(x,t,z)
			=
			\mathbb I
			+
			e^{-i\theta(x,t,z)\sigma_3}
			\begin{pmatrix}
				0&\varrho^*(z)\chi_{\mathscr L_+^*\cup\mathscr L_-^*}(z)\\
				-\varrho(z)\chi_{\mathscr L_+\cup\mathscr L_-}(z)&0
			\end{pmatrix}
			e^{i\theta(x,t,z)\sigma_3}.
		\end{equation}
		Consequently, the potential function reconstructed from this RHP agrees with the potential function reconstructed from the exterior expansion of the $\bar{\partial}$-problem~\ref{prob:dbar}.
	\end{prop}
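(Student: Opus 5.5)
The approach is to convert the area integrals in the Cauchy--Green representation \eqref{eq:columns-CG} into contour integrals by Stokes' theorem, and then to collapse those contour integrals onto the mother body using the Schwarz function. The first observation is that $M_{2,2}$ is holomorphic in $\mathcal D_\pm$ and $M_{2,1}$ is holomorphic in $\mathcal D_\pm^*$, by \eqref{eq:dbar-decoupled}. Since $\operatorname{cl}(\mathcal D_+)\Subset\mathbb C_{\mathrm I}$, the four domains are pairwise disjoint. Moreover $f$ and $e^{2i\theta}$ are holomorphic near $\operatorname{cl}(\mathcal D_\pm)$. Hence, for $z$ exterior to all four domains, the integrand of the first line of \eqref{eq:columns-CG} has the form $h(\lambda)/(\lambda-z)$ with $h$ holomorphic. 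We write $d\bar\lambda\wedge d\lambda=-d(\bar\lambda\,d\lambda)$, using $\bar\lambda\,d\lambda$ as a primitive, and apply Stokes' theorem. This gives
\[
-\iint_{\mathcal D_+}\frac{h(\lambda)}{\lambda-z}\frac{d\bar\lambda\wedge d\lambda}{2\pi i}
=\frac{1}{2\pi i}\oint_{\partial\mathcal D_+}\frac{h(\lambda)\,\bar\lambda}{\lambda-z}\,d\lambda ,
\]
with the sign to be checked against the paper's convention $d^2\lambda=\frac{d\bar\lambda\wedge d\lambda}{2i}$.

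Next, on $\partial\mathcal D_+$ we replace $\bar\lambda$ by $\mathcal S(\lambda)$. The product $h\mathcal S/(\lambda-z)$ is then holomorphic in $\mathcal D_+\setminus\mathscr L_+$, so the contour can be deformed inward until it wraps tightly around $\mathscr L_+$. The two sides of the cut contribute the difference of boundary values, which yields
\[
\frac{1}{2\pi i}\int_{\mathscr L_+}\frac{M_{2,2}(\lambda)f(\lambda)e^{2i\theta(\lambda)}\Delta\mathcal S(\lambda)}{\lambda-z}\,d\lambda
=\frac{1}{2\pi i}\int_{\mathscr L_+}\frac{M_{2,2}(\lambda)\varrho(\lambda)e^{2i\theta(\lambda)}}{\lambda-z}\,d\lambda ,
\]
where the orientation of $\mathscr L_+$ is fixed so that this matches \eqref{eq:mother-density}. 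For the other domains we use the symmetries: $\mathcal D_-$ is handled through $z\mapsto -z$, with Schwarz function $-\mathcal S(-z)$, and $\mathcal D_\pm^*$ through Schwarz conjugation, where the relevant Schwarz function is $\overline{\mathcal S^{-1}}$-type, or equivalently we just conjugate the entire computation using $M_2=\sigma_2M_2^*\sigma_2$. The result is that, for exterior $z$,
\[
M_2(z)=\mathbb I+\frac{1}{2\pi i}\int_{\Sigma_{\rm mb}}\frac{M_2(\lambda)\,\widehat W(\lambda)}{\lambda-z}\,d\lambda ,
\]
where $\widehat W$ is exactly the off-diagonal part of \eqref{eq:mother-jump} and $M_2(\lambda)$ on $\Sigma_{\rm mb}$ denotes the holomorphic continuation of the relevant column from the exterior into the domain.

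The key point is to identify this continuation. Define $\widetilde m$ to equal $M_2$ outside the four domains. Inside $\mathcal D_+\setminus\mathscr L_+$, set $\widetilde m$ equal to $M_2$ multiplied on the right by a lower-triangular factor whose $(2,1)$ entry is $-f e^{2i\theta}(\bar z-\mathcal S(z))$. This function vanishes on $\partial\mathcal D_+$, so no jump is created there. Its $\bar z$-derivative cancels the $\bar\partial$ term of $M_{2,1}$, since $\partial_{\bar z}M_{2,1}=fe^{2i\theta}M_{2,2}$ and $M_{2,2}$ is holomorphic. The analogous upper-triangular factor is used in $\mathcal D_\pm^*$, together with the symmetric versions in $\mathcal D_-$ and $\mathcal D_-^*$. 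The resulting $\widetilde m$ is then sectionally holomorphic off $\Sigma_{\rm mb}$. Its jump across $\mathscr L_+$ comes only from the jump of $\mathcal S$, which gives the factor $\mathbb I-\varrho e^{2i\theta}E_{21}=\mathbb I+e^{-i\theta\sigma_3}(-\varrho E_{21})e^{i\theta\sigma_3}$, in agreement with \eqref{eq:mother-jump}. Because $\widetilde m=M_2$ near infinity, the two functions share the $z^{-1}$ coefficient, and \eqref{eq:reconstruction-dbar} gives the same $u$ for both.

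The main obstacle I expect is the behaviour at the endpoints of the mother body. For an ellipse, $\Delta\mathcal S$ has square-root behaviour at the foci, so $\mathcal S$ is bounded but not smooth there. It must be checked that $\widetilde m$ has at most $L^2$-admissible singularities at the foci, so that the RHP solution is unique and the Cauchy representation holds. I would first argue local boundedness from the explicit form of $\mathcal S$ and then apply Liouville's theorem to $\widetilde m\,\widetilde m_{\rm other}^{-1}$, after checking that $\det\widetilde J=1$. The sign and orientation bookkeeping in the Stokes step is routine but error-prone.
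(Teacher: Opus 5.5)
Your proposal is correct. Its first half is exactly the paper's argument: Stokes' theorem with the primitive $\bar\lambda\,d\lambda$, the substitution $\bar\lambda=\mathcal S(\lambda)$ on $\partial\mathcal D_+$, and the collapse of the contour onto $\mathscr L_+$.

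You differ from the paper in the closing step. The paper stops once it has, for exterior $z$, a Cauchy integral over $\Sigma_{\rm mb}$ whose density is the $\bar\partial$-solution $M_2$ evaluated on the mother body. It then simply asserts that this has the same large-$z$ expansion as the RHP. That assertion tacitly identifies the interior values of $M_2$ on $\Sigma_{\rm mb}$ with the boundary values $\widetilde m_-$ of an RHP solution, and the paper does not argue this. (It could be done: evaluate the collapsed Cauchy--Green formulas on $\mathscr L_\pm$ and $\mathscr L_\pm^*$, observe that they form a closed, non-singular Fredholm system coinciding with the Beals--Coifman equations, and invoke uniqueness.)

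Your dressing supplies this identification directly. You set $\widetilde m=M_2\begin{pmatrix}1&0\\ \ell&1\end{pmatrix}$ in $\mathcal D_+$ with $\ell=-fe^{2i\theta}(\bar z-\mathcal S(z))$, and then:
\begin{enumerate}
\item $\partial_{\bar z}\ell=-fe^{2i\theta}$ cancels the $\bar\partial$-term;
\item $\ell|_{\partial\mathcal D_+}=0$ gives continuity across the boundary;
\item $\ell_+-\ell_-=-\varrho e^{2i\theta}$ reproduces \eqref{eq:mother-jump};
\item the second column is untouched, so $\widetilde m_{\pm,2}=M_{2,2}$ on $\mathscr L_+$.
\end{enumerate}
This proves the stronger statement $\widetilde m\equiv M_2$ outside the four domains and makes the Stokes computation redundant. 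Everything reduces to uniqueness for the RHP. That is easy here, because $\widetilde m$ is bounded at the foci ($M_2$ is continuous and $\mathcal S_{\rm ell}$ is bounded) and $\det\widetilde J=1$. The paper's route, in exchange, gives the explicit mother-body integral representation of $M_2$ immediately, which is what one uses to set up the singular integral equations.

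Two small slips, both harmless because your final argument does not depend on them:
\begin{enumerate}
\item $d(\bar\lambda\,d\lambda)=d\bar\lambda\wedge d\lambda$, with no minus sign. The paper's Stokes identity holds as written, and your displayed version carries an extra sign.
\item The Schwarz function of $\mathcal D_+^*$ is $\mathcal S^*(z)=\overline{\mathcal S(\bar z)}$, which is the local inverse of $\mathcal S$, not its conjugate. The upper-triangular dressing in $\mathcal D_+^*$ should therefore be $u=f^*e^{-2i\theta}(\bar z-\mathcal S^*(z))$. With the upward orientation of $\mathscr L_+^*$ shown in Figure~\ref{fig:ellipse-collapse}, this yields the required jump entry $+\varrho^*e^{-2i\theta}$.
\end{enumerate}
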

	
	\begin{proof}
		We give the argument for $\mathcal D_+$; the other domains follow by symmetry.  Since $M_{2,2}(\lambda)f(\lambda)e^{2i\theta(\lambda)}$ is analytic in $\mathcal D_+\setminus\mathscr L_+$, Stokes' theorem applied to the one-form
		\[
		\frac{M_{2,2}(\lambda)f(\lambda)e^{2i\theta(\lambda)}\bar\lambda}
		{\lambda-z}d\lambda
		\]
		gives, for exterior $z$,
		\[
		\iint_{\mathcal D_+}
		\frac{M_{2,2}(\lambda)f(\lambda)e^{2i\theta(\lambda)}}{\lambda-z}
		\frac{d\bar\lambda\wedge d\lambda}{2\pi i}
		=
		\int_{\partial\mathcal D_+}
		\frac{M_{2,2}(\lambda)f(\lambda)\mathcal S(\lambda)e^{2i\theta(\lambda)}}{\lambda-z}
		\frac{d\lambda}{2\pi i}.
		\]
		The analytic continuation of the Schwarz function $\mathcal S(z)$ is single-valued away from the mother body.  Deforming the boundary to the two sides of $\mathscr L_+$ therefore gives
		\[
		\int_{\partial\mathcal D_+}
		\frac{M_{2,2}f\mathcal Se^{2i\theta}}{\lambda-z}
		\frac{d\lambda}{2\pi i}
		=
		\int_{\mathscr L_+}
		\frac{M_{2,2}(\lambda)f(\lambda)\Delta\mathcal S(\lambda)e^{2i\theta(\lambda)}}{\lambda-z}
		\frac{d\lambda}{2\pi i}.
		\]
		The formula in the conjugate domain can also be obtained in the same way.  Hence the exterior Cauchy representation \eqref{eq:columns-CG} has the same large-$z$ expansion as the Cauchy singular integral equation associated with the jump matrix \eqref{eq:mother-jump}.  The equality of the reconstruction coefficients follows from \eqref{eq:reconstruction-dbar}.
	\end{proof}

	\subsection{The elliptic model and its reduced RHP}
	\label{subsec:elliptic-model}
	
	We now specialize to the elliptic domain used in the large-$x$ and long-time asymptotic analysis.  Let
	\[
	E_1=a+ib_1,\qquad E_2=a+ib_2,
	\qquad a>0,\quad 0<b_1<b_2.
	\]
	Put
	$
	b_0:=\frac{b_1+b_2}{2},
	\ 
	\ell:=\frac{b_2-b_1}{2}.
	$
	Choose $\rho>\ell$ and impose
	$
	a>\sqrt{\rho^2-\ell^2},
	\ 
	b_0>\rho .
	$
	These inequalities ensure that the ellipse below lies strictly in the first quadrant.  The domain $\mathcal D_+$ is the ellipse
	\begin{equation}
		\label{eq:ellipse}
		\frac{(x-a)^2}{\rho^2-\ell^2}
		+
		\frac{(y-b_0)^2}{\rho^2}
		<1,
		\qquad z=x+iy.
	\end{equation}
	Equivalently, its boundary is characterized by
	\[
	\sqrt{(x-a)^2+(y-b_1)^2}
	+
	\sqrt{(x-a)^2+(y-b_2)^2}
	=2\rho .
	\]
	The Schwarz function of this ellipse is
	\begin{equation}
		\label{eq:ellipse-Schwarz}
		\mathcal S_{\rm ell}(z)
		=
		\frac{2\rho^2}{\ell^2}(a+ib_0-z)
		+
		(z-2ib_0)
		+
		2\frac{\rho}{\ell^2}\sqrt{\rho^2-\ell^2}
		\sqrt{(z-E_1)(z-E_2)} .
	\end{equation}
	One checks directly from \eqref{eq:ellipse} that $\mathcal S_{\rm ell}(z)=\bar z$ on $\partial\mathcal D_+$.  The branch of the square root is chosen so that $\mathcal S_{\rm ell}$ is analytic outside the segment
	$
	\mathscr I:=[E_1,E_2],
	$
	which is oriented from $E_1$ to $E_2$.  This segment is the mother body of the ellipse.  With this orientation, we have
	\begin{equation}
		\label{eq:ellipse-Schwarz-jump}
		\Delta\mathcal S_{\rm ell}(z)
		=
		4\frac{\rho}{\ell^2}\sqrt{\rho^2-\ell^2}
		\sqrt{(z-E_1)(z-E_2)},
		\qquad z\in\mathscr I .
	\end{equation}
	
	The symmetry-related mother-body segments are
	$
	-\mathscr I,\  \mathscr I^*,\  -\mathscr I^*,
	$
	with orientations inherited from the positively oriented boundaries of the corresponding ellipses.  We set
	\begin{equation}\label{jump-contours-ell}
		\Sigma_{\rm ell}:=
		\mathscr I\cup(-\mathscr I)\cup\mathscr I^*\cup(-\mathscr I^*) .
	\end{equation}
	Then the density function is
	\begin{equation}
		\label{eq:elliptic-density}
		\varrho(z):=f(z)\Delta\mathcal S_{\rm ell}(z),
		\qquad z\in\mathscr I\cup(-\mathscr I),
	\end{equation}
	and $\varrho^*(z)=\overline{\varrho(\bar z)}$ on $\mathscr I^*\cup(-\mathscr I^*)$.

	\begin{figure}[ht]
		\centering
		\begin{tikzpicture}[scale=1.5]
			
			\colorlet{cmpaxis}{black!75}
			\colorlet{cmpgrid}{black!7}
			\colorlet{cmpdomainfill}{blue!6}
			\colorlet{cmpdomainedge}{blue!45!black}
			\colorlet{cmpsegment}{red!65!black}
			\colorlet{cmppoint}{black}
			\colorlet{cmplabel}{black!90}
			\colorlet{cmpsublabel}{red!55!black}
			\colorlet{cmpdomainlabel}{blue!45!black}
			
			\tikzset{
				axis/.style={->, line width=0.65pt, cmpaxis},
				gridline/.style={step=0.5, cmpgrid, very thin},
				ellregion/.style={fill=cmpdomainfill, draw=cmpdomainedge, line width=1.05pt},
				segment/.style={cmpsegment, line width=1.25pt},
				point/.style={cmppoint},
				lbl/.style={cmplabel, font=\small},
				sublbl/.style={cmpsublabel, font=\small},
				domlbl/.style={cmpdomainlabel, font=\small},
			}
			
			\begin{scope}
				\draw[gridline] (-1.9,-1.9) grid (1.9,1.9);
				
				\draw[axis] (-2,0) -- (2,0);
				\draw[axis] (0,-2) -- (0,2);
				
				\node[below left, lbl] at (0,0) {$0$};
				\node[below, lbl] at (2,0) {$\mathrm{Re}(z)$};
				\node[right, lbl] at (0,2) {$\mathrm{Im}(z)$};
				
				\draw[ellregion] (1,0.9) ellipse (0.5 cm and 0.8 cm);
				\draw[ellregion] (-1,-0.9) ellipse (0.5 cm and 0.8 cm);
				\draw[ellregion] (1,-0.9) ellipse (0.5 cm and 0.8 cm);
				\draw[ellregion] (-1,0.9) ellipse (0.5 cm and 0.8 cm);
				
				\draw[segment] (1,0.5)--(1,1.3);
				\draw[segment] (1,-0.5)--(1,-1.3);
				\draw[segment] (-1,0.5)--(-1,1.3);
				\draw[segment] (-1,-0.5)--(-1,-1.3);
				
				\filldraw[point] (1,0.5) circle (0.8pt);
				\node[below, lbl] at (1,0.5) {${\small E_1}$};
				
				\filldraw[point] (1,1.3) circle (0.8pt);
				\node[above, lbl] at (1,1.3) {$E_2$};
				
				\filldraw[point] (-1,0.5) circle (0.8pt);
				\node[below, lbl] at (-1,0.5) {$-\bar E_1$};
				
				\filldraw[point] (-1,1.3) circle (0.8pt);
				\node[above, lbl] at (-1,1.3) {$-\bar E_2$};
				
				\filldraw[point] (-1,-0.5) circle (0.8pt);
				\node[above, lbl] at (-1,-0.5) {$-E_1$};
				
				\filldraw[point] (-1,-1.3) circle (0.8pt);
				\node[below, lbl] at (-1,-1.3) {$-E_2$};
				
				\filldraw[point] (1,-0.5) circle (0.8pt);
				\node[above, lbl] at (1,-0.5) {$\bar E_1$};
				
				\filldraw[point] (1,-1.3) circle (0.8pt);
				\node[below, lbl] at (1,-1.3) {$\bar E_2$};
				
				\node[domlbl] at (1.73,1.0) {$\mathcal D_+$};
				\node[domlbl] at (-1.73,-1.0) {$\mathcal D_-$};
				\node[domlbl] at (1.73,-1.0) {$\mathcal D_+^*$};
				\node[domlbl] at (-1.73,1.0) {$\mathcal D_-^*$};
				
				\node[below, lbl] at (0,-2) {(a)};
			\end{scope}
			
			\begin{scope}
				\draw[gridline] (3,-1.9) grid (7,1.9);
				
				\draw[axis] (3,0) -- (7,0);
				\draw[axis] (5,-2) -- (5,2);
				
				\node[below left, lbl] at (5,0) {$0$};
				\node[below, lbl] at (7,0) {$\mathrm{Re}(z)$};
				\node[right, lbl] at (5,2) {$\mathrm{Im}(z)$};
				
				\draw[
				segment,
				postaction={decorate},
				decoration={markings, mark=at position 0.55 with {\arrow{Stealth}}}
				] (6,0.5)--(6,1.3);
				
				\draw[
				segment,
				postaction={decorate},
				decoration={markings, mark=at position 0.55 with {\arrow{Stealth}}}
				] (6,-1.3)--(6,-0.5);
				
				\draw[
				segment,
				postaction={decorate},
				decoration={markings, mark=at position 0.55 with {\arrow{Stealth}}}
				] (4,0.5)--(4,1.3);
				
				\draw[
				segment,
				postaction={decorate},
				decoration={markings, mark=at position 0.55 with {\arrow{Stealth}}}
				] (4,-1.3)--(4,-0.5);
				
				\filldraw[point] (6,0.5) circle (0.8pt);
				\node[below, lbl] at (6,0.5) {$E_1$};
				
				\filldraw[point] (6,1.3) circle (0.8pt);
				\node[above, lbl] at (6,1.3) {$E_2$};
				\node[right, sublbl] at (6.08,0.9) {$\mathscr I$};
				
				\filldraw[point] (4,0.5) circle (0.8pt);
				\node[below, lbl] at (4,0.5) {$-\bar E_1$};
				
				\filldraw[point] (4,1.3) circle (0.8pt);
				\node[above, lbl] at (4,1.3) {$-\bar E_2$};
				\node[left, sublbl] at (3.92,0.9) {$-\mathscr I^*$};
				
				\filldraw[point] (4,-0.5) circle (0.8pt);
				\node[above, lbl] at (4,-0.5) {$-E_1$};
				
				\filldraw[point] (4,-1.3) circle (0.8pt);
				\node[below, lbl] at (4,-1.3) {$-E_2$};
				\node[left, sublbl] at (3.92,-0.9) {$-\mathscr I$};
				
				\filldraw[point] (6,-0.5) circle (0.8pt);
				\node[above, lbl] at (6,-0.5) {$\bar E_1$};
				
				\filldraw[point] (6,-1.3) circle (0.8pt);
				\node[below, lbl] at (6,-1.3) {$\bar E_2$};
				\node[right, sublbl] at (6.08,-0.9) {$\mathscr I^*$};
				
				\node[below, lbl] at (5,-2) {(b)};
			\end{scope}
			
		\end{tikzpicture}
		\caption{Mother body reduction for an elliptic condensation domain.  Panel (a) shows the four symmetry-related elliptic domains.  Panel (b) shows the reduced contour $\Sigma_{\rm ell}$ consisting of the mother body segments $\pm\mathscr I$ and $\pm\mathscr I^*$.}
		\label{fig:ellipse-collapse}
	\end{figure}
	
	For the elliptic DNLS soliton gas, 
	the exterior mother body representation takes the form of the
	following RHP:
	\begin{problem}
		\label{prob:elliptic-rhp}
		Find a $2\times2$ matrix-valued function $m(x,t,z)$ analytic in
		$
		\mathbb C\setminus\Sigma_{\rm ell}
		$
		such that:
		\begin{enumerate}
			\item \textbf{Jump condition.}
			For $z\in\Sigma_{\rm ell}$, the following jump condition holds
			\begin{equation}
				\label{eq:elliptic-jump}
				m_+(x,t,z)=m_-(x,t,z)J(x,t,z),
			\end{equation}
			where
			\begin{equation}
				\label{eq:elliptic-jump-matrix}
				J(x,t,z)=
				\begin{cases}
					\begin{pmatrix}
						1&0\\
						-\varrho(z)e^{2i\theta(x,t,z)}&1
					\end{pmatrix},
					& z\in\mathscr I\cup(-\mathscr I),\\[5mm]
					\begin{pmatrix}
						1&\varrho^*(z)e^{-2i\theta(x,t,z)}\\
						0&1
					\end{pmatrix},
					& z\in\mathscr I^*\cup(-\mathscr I^*).
				\end{cases}
			\end{equation}

			\item \textbf{Normalization.}
			\begin{equation}
				\label{eq:elliptic-normalization}
				m(x,t,z)=\mathbb I+\mathcal{O}(z^{-1}),
				\qquad z\to\infty .
			\end{equation}
			
			\item \textbf{Symmetries.}
			\begin{equation}
				\label{eq:elliptic-symmetry}
				m(x,t,z)=\sigma_2m^*(x,t,z)\sigma_2,
				\qquad
				m(x,t,z)=\sigma_3m(x,t,-z)\sigma_3 .
			\end{equation}
		\end{enumerate}

	\end{problem}
	
	By Proposition~\ref{prop:mother-body-reduction}, its reconstruction coefficient agrees with that of the exterior solution of the continuum $\bar{\partial}$-problem~\ref{prob:dbar}. The elliptic soliton gas potential is reconstructed by
	\begin{equation}
		\label{eq:elliptic-reconstruction}
		u(x,t)=2i\lim_{z\to\infty}z[m(x,t,z)]_{12}.
	\end{equation}
	RHP~\ref{prob:elliptic-rhp} is the
	RHP used in the rest of the paper. The large-$x$ asymptotics and the long-time asymptotics are therefore obtained by applying the Deift-Zhou nonlinear steepest descent analysis to RHP~\ref{prob:elliptic-rhp}.

	\section{Spectral symmetry and genus reduction of the Riemann Surface}
	\label{sec:genus-reduction}
	
	The RHP~\ref{prob:elliptic-rhp} is supported on branch cuts which are invariant under the spectral involution $z\mapsto -z$.  This symmetry is not a cosmetic feature of the formulation.  It is the mechanism which lowers the Abelian dimension of the model problem.  A model problem which appears, in the original $z$-plane, to be associated with a hyperelliptic curve of genus-$(2n+1)$ actually uses only the invariant part of the Abelian geometry.  Equivalently, it descends to a quotient curve (also called quotient algebraic curve) of genus-$n$ under the map
	\begin{equation}\label{map}
		\lambda=z^2.
	\end{equation}
	This section gives the intrinsic form of this reduction.  The point is that the reduction is a quotient reduction, not a degeneration of the spectral curve.  No branch points coalesce; rather, the symmetric RHP factors through the quotient by the involution $z\mapsto -z$.
	
	\subsection{The symmetric spectral curve and its quotient}
	\label{subsec:symmetric-curve-quotient}
	
	Let $n\geq1$ be the effective genus which will appear in the theta function solution of the model problem.  Let
	$
	E_1,E_2,\ldots,$
	$E_{n+1}\in\mathbb C_{\mathrm I}
	$
	be distinct points in the first quadrant, and assume that the $2n+2$ points
	\[
	E_1^2,\ldots,E_{n+1}^2,
	\qquad
	\bar E_1^{2},\ldots,\bar E_{n+1}^{2}
	\]
	are distinct and nonzero.  In the original $z$-plane, the symmetric branch point set is
	$
	\{\pm E_j,\pm\bar E_j\}_{j=1}^{n+1} .
	$
	The associated hyperelliptic curve \cite{Farkas} is
	\begin{equation}\label{hyperelliptic-curve-on-z}
		\mathcal X_n:
		\ 
		\mathcal{R}^2=\prod_{j=1}^{n+1}
		(z^2-E_j^2)(z^2-\bar E_j^{2}).
	\end{equation}
	It is a two-sheeted covering of the $z$-sphere with $4n+4$ finite simple branch points.  Therefore, the genus is
	\[
	g(\mathcal X_n)=\frac{4n+4-2}{2}=2n+1 .
	\]
	
	The curve $\mathcal X_n$ admits the holomorphic involution
	\[
	\rho:\mathcal X_n\longrightarrow\mathcal X_n,
	\qquad
	\rho(z,\mathcal{R})=(-z,\mathcal{R}).
	\]
	This involution is the geometric counterpart of the spectral symmetry \eqref{eq:elliptic-symmetry} of the RHP \ref{prob:elliptic-rhp}.
	Taking the quotient by $\rho$ gives the curve
	\[
	\widehat{\mathcal X}_n:=\mathcal X_n/\rho.
	\]
	It is described by the quotient coordinate
	\[
	\lambda=z^2,
	\qquad
	\widehat{\mathcal{R}}= \mathcal{R}.
	\]
	Consequently, we have
	\begin{equation}\label{hyperelliptic-curve-on-zeta}
		\widehat{\mathcal X}_n:
		\ 
		\widehat { \mathcal{R}}^{2}
		=
		\prod_{j=1}^{n+1}
		(\lambda-E_j^2)(\lambda-\bar E_j^{2}).
	\end{equation}
	This is a hyperelliptic curve with $2n+2$ finite simple branch points, and hence the genus is reduced to
	\[
	g(\widehat{\mathcal X}_n)=n .
	\]
	The quotient projection is
	\[
	\Pi:\mathcal X_n\longrightarrow\widehat{\mathcal X}_n,
	\qquad
	\Pi(z, \mathcal{R})=(z^2, \mathcal{R}).
	\]
	The four ramification points of $\Pi$ are the two points over $z=0$ and the two points at infinity.  The Riemann--Hurwitz formula \cite{Farkas} gives
	\[
	2g(\mathcal X_n)-2=
	2\bigl(2g(\widehat{\mathcal X}_n)-2\bigr)+4,
	\]
	which is consistent with
	\[
	g(\mathcal X_n)=2n+1,
	\qquad
	g(\widehat{\mathcal X}_n)=n .
	\]
	Thus the natural curve in the $z$-plane has genus $2n+1$, but the quotient curve selected by the symmetry of the DNLS equation \eqref{dnls3} has genus $n$.
	
	The genus reduction is most transparent at the level of holomorphic differentials.  On the full curve $\mathcal X_n$, a basis of holomorphic differentials is
	\begin{equation}\label{holomorphic-differentials}
		\eta_k=\frac{z^{k-1}}{\mathcal{R}(z)}dz,
		\qquad
		k=1,\ldots,2n+1.
	\end{equation}
	Since $\mathcal{R}(-z)=\mathcal{R}(z)$ and $d(-z)=-dz$, the pullback by $\rho$ satisfies
	\[
	\rho^*\eta_k=(-1)^k\eta_k .
	\]
	Therefore the $\rho$-invariant holomorphic differentials are precisely those with even index:
	\[
	\eta_2,\eta_4,\ldots,\eta_{2n} .
	\]
	Equivalently, the invariant subspace is spanned by
	\begin{equation}\label{invariant-subspace}
		\frac{z}{\mathcal{R}(z)}dz,
		\quad
		\frac{z^3}{\mathcal{R}(z)}dz,
		\quad
		\ldots,
		\quad
		\frac{z^{2n-1}}{\mathcal{R}(z)}dz.
	\end{equation}
	It has dimension $n$, exactly the genus of the quotient curve $\widehat{\mathcal X}_n$.
	
	On the quotient curve $\widehat{\mathcal X}_n$, a basis of holomorphic differentials is
	\[
	\widehat\eta_m=\frac{\lambda^{m-1}}{\widehat {\mathcal{R}}(\lambda)}d\lambda,
	\qquad
	m=1,\ldots,n .
	\]
	Pulling back by $\Pi$ gives
	\[
	\Pi^*\widehat\eta_m=\frac{(z^2)^{m-1}}{\mathcal{R}(z)}d(z^2)=
	2\frac{z^{2m-1}}{\mathcal{R}(z)}dz,
	\qquad
	m=1,\ldots,n .
	\]
	Hence the pullbacks of holomorphic differentials on $\widehat{\mathcal X}_n$ are precisely the invariant holomorphic differentials on $\mathcal X_n$.
	
	\begin{prop}
		\label{prop:invariant-differentials-revised}
		The pullback map
		$
		\Pi^*:H^{1,0}(\widehat{\mathcal X}_n)
		\longrightarrow
		H^{1,0}(\mathcal X_n)
		$
		is an isomorphism from $H^{1,0}(\widehat{\mathcal X}_n)$ onto the invariant subspace
		\[
		H^{1,0}(\mathcal X_n)^\rho
		:=
		\{\omega\in H^{1,0}(\mathcal X_n):\rho^*\omega=\omega\}.
		\]
		In particular,
		$
		\dim H^{1,0}(\mathcal X_n)^\rho=n .
		$
	\end{prop}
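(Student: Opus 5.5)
The plan is to prove the statement in three steps: $\Pi^*$ lands in the invariant subspace, $\Pi^*$ is injective, and the invariant subspace has dimension at most $n$. We use the explicit bases already introduced, $\widehat\eta_m=\lambda^{m-1}d\lambda/\widehat{\mathcal R}$ on $\widehat{\mathcal X}_n$ and $\eta_k=z^{k-1}dz/\mathcal R$ on $\mathcal X_n$.

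First, invariance. Since $\Pi\circ\rho=\Pi$, functoriality gives
\[
\rho^*\Pi^*\omega=(\Pi\circ\rho)^*\omega=\Pi^*\omega
\]
for every $\omega\in H^{1,0}(\widehat{\mathcal X}_n)$. The pullback of a holomorphic differential under a holomorphic map is holomorphic, so $\Pi^*$ maps into $H^{1,0}(\mathcal X_n)^\rho$. As a concrete check, we recall that $\Pi^*\widehat\eta_m=2\eta_{2m}$ for $m=1,\dots,n$. These forms are holomorphic on $\mathcal X_n$, because $\eta_k$ is holomorphic for $k\le 2n+1=g(\mathcal X_n)$.

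Second, injectivity. $\Pi$ is a nonconstant holomorphic map of compact Riemann surfaces, so it is a local biholomorphism away from its four ramification points. If $\Pi^*\omega=0$, then $\omega$ vanishes on the open dense image of the unramified locus, and hence $\omega\equiv0$. Alternatively, the images $2\eta_2,2\eta_4,\dots,2\eta_{2n}$ are part of the basis $\{\eta_k\}_{k=1}^{2n+1}$ of $H^{1,0}(\mathcal X_n)$, so they are linearly independent. Either way, $\Pi^*$ is injective, and $\dim H^{1,0}(\mathcal X_n)^\rho\ge n$.

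Third, surjectivity onto the invariant subspace. Take $\omega=\sum_{k=1}^{2n+1}a_k\eta_k$ with $\rho^*\omega=\omega$. Using $\rho^*\eta_k=(-1)^k\eta_k$, this gives
\[
\sum_{k\ \mathrm{odd}} 2a_k\eta_k=0 .
\]
Linear independence of the $\eta_k$ forces $a_k=0$ for every odd $k$. Hence $\omega\in\operatorname{span}\{\eta_2,\dots,\eta_{2n}\}$, which equals $\Pi^*H^{1,0}(\widehat{\mathcal X}_n)$. It follows that $\Pi^*$ is an isomorphism onto $H^{1,0}(\mathcal X_n)^\rho$ and that $\dim H^{1,0}(\mathcal X_n)^\rho=n$.

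The main point requiring care is that $\{\eta_k\}_{k=1}^{2n+1}$ really is a basis of $H^{1,0}(\mathcal X_n)$; the other steps follow from it. We verify this directly. Near each finite branch point, $\mathcal R$ vanishes to first order in the local parameter $s$ with $z-z_0=s^2$, and $dz=2s\,ds$, so $\eta_k$ is regular there. At the two points at infinity, put $w=1/z$. Then $\mathcal R\sim z^{2n+2}$ and
\[
\eta_k\sim -w^{2n+1-k}\,dw ,
\]
which is holomorphic exactly when $k\le 2n+1$. The $2n+1$ forms are independent because $z^{k-1}$ are independent polynomials, and their number equals $g(\mathcal X_n)$, so they span $H^{1,0}(\mathcal X_n)$. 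The same computation on $\widehat{\mathcal X}_n$, where $\widehat{\mathcal R}$ has degree $n+1$ in $\lambda$, gives the basis $\{\widehat\eta_m\}_{m=1}^n$.
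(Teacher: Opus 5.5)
Your proof is correct and follows essentially the same route as the paper: the explicit bases $\eta_k$ and $\widehat\eta_m$, the parity relation $\rho^*\eta_k=(-1)^k\eta_k$, and the identity $\Pi^*\widehat\eta_m=2\eta_{2m}$ give containment in the invariant subspace, linear independence, and the dimension count. Your additions fill in details the paper leaves implicit: you check that $\{\eta_k\}_{k=1}^{2n+1}$ really is a basis of $H^{1,0}(\mathcal X_n)$, and you justify injectivity by $\Pi$ being a local biholomorphism off its ramification points, which is sharper than the paper's brief appeal to surjectivity of $\Pi$.
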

	
	\begin{proof}
		The calculation above shows that $\Pi^*H^{1,0}(\widehat{\mathcal X}_n)$ is contained in $H^{1,0}(\mathcal X_n)^\rho$ and is spanned by \eqref{invariant-subspace}. These $n$ differentials are linearly independent.  Since the invariant subspace of the basis displayed above is also $n$-dimensional, the pullback map is onto $H^{1,0}(\mathcal X_n)^\rho$.  It is injective because $\Pi$ is surjective.  Therefore $\Pi^*$ gives the stated isomorphism.
	\end{proof}
	The complementary anti-invariant subspace is
	\[
	H^{1,0}(\mathcal X_n)^{-\rho}
	:=
	\{\omega\in H^{1,0}(\mathcal X_n):\rho^*\omega=-\omega\}.
	\]
	It is spanned by
	$
	\eta_1,\eta_3,\ldots,\eta_{2n+1},
	$
	and has dimension $n+1$.
	
	\subsection{Descent of the Abelian differentials used in the model problem}
	\label{subsec:descent-abelian-differentials}
	
	The same quotient principle applies to the meromorphic Abelian differentials used to construct the $g$-function, the phase constants, and the normalized model parametrices.  The differentials appearing in the nonlinear steepest descent analysis of the DNLS equation have the symmetric form
	\[
	d\Omega(z)=
	\frac{zQ(z^2)}{\mathcal{R}(z)}dz,
	\]
	where $Q$ is a polynomial, possibly together with compatible principal parts at points belonging to the same $\rho$-orbit.  Such differentials satisfy
	\[
	\rho^*d\Omega=d\Omega .
	\]
	Since $d\lambda=2zdz$, every differential of the form displayed above can be written as the pullback of a differential on the quotient curve:
	\[
	d\Omega=\frac12\Pi^*\left(\frac{Q(\lambda)}{\widehat{\mathcal{R}}(\lambda)}d\lambda\right).
	\]
	Thus the $g$-function differentials used later are naturally quotient differentials.
	
	\begin{prop}
		\label{prop:descent-differentials-revised}
		Let $d\Omega$ be a meromorphic differential on $\mathcal X_n$ satisfying
		\[
		\rho^*d\Omega=d\Omega .
		\]
		Assume that the principal parts of $d\Omega$ at points in the same $\rho$-orbit are identified by the involution.  Then there exists a unique meromorphic differential $d\widehat\Omega$ on $\widehat{\mathcal X}_n$ such that
		\[
		d\Omega=\Pi^*d\widehat\Omega .
		\]
		Conversely, the pullback of any meromorphic differential on $\widehat{\mathcal X}_n$ is a $\rho$-invariant meromorphic differential on $\mathcal X_n$.
	\end{prop}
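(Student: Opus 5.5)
We will construct $d\widehat\Omega$ locally on $\widehat{\mathcal X}_n$ and glue, using that $\Pi:\mathcal X_n\to\widehat{\mathcal X}_n$ is a degree-two branched covering whose deck group is $\{\mathrm{id},\rho\}$. The converse direction is immediate and we dispose of it first. Since $\Pi\circ\rho=\Pi$, for any meromorphic $d\widehat\Omega$ on $\widehat{\mathcal X}_n$ we get $\rho^*\Pi^*d\widehat\Omega=(\Pi\circ\rho)^*d\widehat\Omega=\Pi^*d\widehat\Omega$. Moreover, the pullback of a meromorphic differential under a holomorphic map is meromorphic.

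For existence, fix a point $\widehat P\in\widehat{\mathcal X}_n$ and consider the two cases.

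\emph{Unramified point.} If $\widehat P$ is not one of the four branch values (the images of the points over $z=0$ and over $z=\infty$), then $\Pi^{-1}(\widehat P)=\{P,\rho(P)\}$ with $P\neq\rho(P)$. We choose a neighbourhood $U\ni P$ with $U\cap\rho(U)=\emptyset$, so that $\Pi|_U$ is biholomorphic onto its image $\widehat U$. We then set $d\widehat\Omega|_{\widehat U}:=((\Pi|_U)^{-1})^*d\Omega$. Invariance $\rho^*d\Omega=d\Omega$ shows that using $\rho(U)$ instead of $U$ gives the same local differential, because $\Pi|_{\rho(U)}=\Pi|_U\circ\rho^{-1}$. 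The local definitions therefore agree on overlaps and glue to a meromorphic differential on the complement of the branch values. The hypothesis that principal parts along a $\rho$-orbit are identified is just the statement that this invariance holds including the singular parts, so poles are transported consistently.

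\emph{Ramification point.} This is the main step. Near a fixed point $P$ of $\rho$, we take a local coordinate $w$ in which $\rho(w)=-w$; for instance $w=z$ near the points over $z=0$, and $w=1/z$ at the points at infinity. The coordinate on the quotient is $s=w^2$. We write $d\Omega=h(w)\,dw$ with $h$ meromorphic. Invariance gives $h(-w)(-dw)=h(w)\,dw$, so $h$ is odd and $h(w)=w\,k(w^2)$ for some meromorphic $k$. Hence
\[
d\Omega=w\,k(w^2)\,dw=\tfrac12 k(s)\,ds,
\]
and we define $d\widehat\Omega:=\tfrac12 k(s)\,ds$ near $\widehat P$. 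It is meromorphic in $s$, since a Laurent series in $w$ containing only odd powers becomes, after dividing by $w$, a Laurent series in $w^2$. Thus $d\widehat\Omega$ extends meromorphically across the four branch values, and $d\Omega=\Pi^*d\widehat\Omega$ holds globally. At the points at infinity one should check the parity in the coordinate $1/z$ carefully. This is consistent with the explicit formula $d\Omega=\tfrac12\Pi^*\bigl(Q(\lambda)\widehat{\mathcal R}^{-1}d\lambda\bigr)$ recorded before the proposition, which serves as a sanity check.

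For uniqueness, suppose $\Pi^*d\widehat\Omega_1=\Pi^*d\widehat\Omega_2$. Then $\Pi^*(d\widehat\Omega_1-d\widehat\Omega_2)=0$. Away from the finitely many branch values $\Pi$ is a local biholomorphism, so the difference vanishes on a dense open set and is therefore identically zero. This uses the same injectivity reasoning as in Proposition~\ref{prop:invariant-differentials-revised}.
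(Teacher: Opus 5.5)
Your proposal is correct and follows the same route as the paper's proof. You descend through the local biholomorphism $\Pi$ away from the four fixed points of $\rho$ and use the quotient coordinate $s=w^2$ at the fixed points; uniqueness comes from $\Pi$ being a local biholomorphism on a dense set, and the converse from $\Pi\circ\rho=\Pi$. Your parity computation $h(w)=w\,k(w^2)$ at the fixed points makes precise what the paper leaves to its principal-part hypothesis, and it shows that this hypothesis is already implied by $\rho^*d\Omega=d\Omega$.
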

	
	\begin{proof}
		Away from the fixed points of $\rho$, the map $\Pi$ identifies each two-point orbit ${P,\rho(P)}$ with one point on $\widehat{\mathcal X}_n$.  Since $d\Omega$ is invariant, its local expressions at $P$ and $\rho(P)$ agree after projection to the quotient coordinate.  Therefore $d\Omega$ defines a meromorphic differential on the quotient away from the branch points of $\Pi$.

		At the fixed points, namely the two points over $z=0$ and the two points at infinity, one uses a local quotient coordinate.  The assumed compatibility of the principal parts ensures that the projected object has at worst meromorphic singularities.  Hence the projected differential is a meromorphic differential $d\widehat\Omega$ on $\widehat{\mathcal X}_n$ and satisfies $d\Omega=\Pi^*d\widehat\Omega$.  Uniqueness follows from the surjectivity of $\Pi$, and the converse is immediate from $\Pi\circ\rho=\Pi$.

	\end{proof}
	
	\begin{remark}
		\label{rem:g-differential-descends}
		In the later nonlinear steepest descent analysis, the normalized $g$-function differential has numerator depending on $z$ through $z$ times a polynomial in $z^2$, divided by the symmetric square root $\mathcal{R}(z)$.  Hence it is $\rho$-invariant and descends to $\widehat{\mathcal X}_n$.  The modulation constants and period conditions can therefore be imposed on the quotient curve from the beginning.
	\end{remark}
	
	We now formulate the Abelian data used in the model solution directly on the quotient curve.  Let
	\[
	\{\widehat a_j,\widehat b_j\}_{j=1}^n
	\]
	be a canonical homology basis on $\widehat{\mathcal X}_n$ as depicted in Figure~\ref{fig:genus}.  Let
	\begin{equation}\label{homology-basis-on-zeta}
		\widehat\nu_j=\frac{\sum_{m=0}^{n-1}d_{jm}\lambda^m}{\widehat {\mathcal{R}}(\lambda)}d\lambda,
		\qquad
		j=1,\ldots,n,
	\end{equation}
	be the normalized holomorphic differentials satisfying
	\[
	\oint_{\widehat a_i}\widehat\nu_j=\delta_{ij},
	\qquad
	i,j=1,\ldots,n .
	\]
	Their pullbacks
	$
	\nu_j:=\Pi^*\widehat\nu_j
	$
	are the normalized invariant holomorphic differentials on the full curve, with respect to cycles which project one-to-one onto $\widehat a_j$ and $\widehat b_j$.  Explicitly,
	\[
	\nu_j=
	2\frac{z\sum_{m=0}^{n-1}d_{jm}z^{2m}}{\mathcal{R}(z)}dz,
	\qquad
	j=1,\ldots,n.
	\]
	Thus the normalized differentials used in the symmetric model problem are precisely the invariant differentials.  No anti-invariant differential contributes.
	
	The reduced period matrix is
	\begin{equation}\label{reduced-period-matrix}
		\widehat{\mathcal P}_{ij}
		=
		\oint_{\widehat b_i}\widehat\nu_j,
		\qquad
		i,j=1,\ldots,n.
	\end{equation}
	By the Riemann bilinear relations on $\widehat{\mathcal X}_n$,
	\begin{equation}\label{period-matrix-conditions}
		\widehat{\mathcal P}^{T}=\widehat{\mathcal P},
		\qquad
		\operatorname{Im}\widehat{\mathcal P}>0.
	\end{equation}
	The function entering the model problem is therefore the $n$-dimensional Riemann theta function
	\[
	\Theta(\mathbf z;\widehat{\mathcal P})
	=
	\sum_{\mathbf n\in\mathbb Z^n}
	\exp\left\{
	2\pi i\mathbf n^T\mathbf z
	+
	\pi i\mathbf n^T\widehat{\mathcal P}\mathbf n
	\right\}.
	\]
	
	Fix a base point $\widehat P_0\in\widehat{\mathcal X}_n$.  The Abel map used in the model parametrix is
	\begin{equation}\label{Abel-map}
		\widehat{\mathcal A}(\widehat P)
		=
		\int_{\widehat P_0}^{\widehat P}
		(\widehat\nu_1,\ldots,\widehat\nu_n)^T,
		\qquad
		\widehat P\in\widehat{\mathcal X}_n.
	\end{equation}
	If $P\in\mathcal X_n$ and $\widehat P=\Pi(P)$, then, for any lift $P_0$ of $\widehat P_0$, we have
	\[
	\widehat{\mathcal A}(\Pi(P))=
	\int_{P_0}^{P}(\nu_1,\ldots,\nu_n)^T
	\quad
	\operatorname{mod}\ \mathbb Z^n+
	\widehat{\mathcal P}\mathbb Z^n .
	\]
	In the original spectral variable, the quotient point is represented by
	\eqref{map}. Thus the two points with spectral coordinates $z$ and $-z$ project to the same point on $\widehat{\mathcal X}_n$.  This is exactly the geometric content of the symmetry $m(x,t,z)=\sigma_3m(x,t,-z)\sigma_3$: the solution, expressed in terms of Riemann theta function, depends on $z$ only through the quotient point $\lambda=z^2$, together with the elementary matrix factors which encode the sheet exchange and the $\sigma_3$ symmetry.
	\begin{figure}[htbp]
		\centering
		\begin{tikzpicture}[scale=1.2,
			transform shape,
			x=1.45cm,
			y=1.45cm,
			every node/.style={font=\small},
			point/.style={circle, fill=black, inner sep=1.35pt},
			cut/.style={black, line width=2.6pt, line cap=round},
			acycle/.style={red!75!black, line width=0.95pt, line cap=round},
			bcycle/.style={blue!70!black, line width=0.95pt, line cap=round},
			breturn/.style={blue!55!black, line width=0.8pt, line cap=round,
				dash pattern=on 2.6pt off 2.6pt},
			cyclearrow/.style={
				postaction={decorate},
				decoration={
					markings,
					mark=at position #1 with {
						\arrow{Stealth[length=2.1mm,width=1.45mm]}
					}
				}
			}
			]
			\coordinate (Enp1) at (-4.25,0); 
			\coordinate (En)   at (-3.25,0); 
			\coordinate (Enm1) at (-2.05,0); 
			\coordinate (Enm2) at (-1.05,0); 
			\coordinate (E4)   at ( 1.05,0); 
			\coordinate (E3)   at ( 2.05,0); 
			\coordinate (E2)   at ( 3.25,0); 
			\coordinate (E1)   at ( 4.25,0); 
			
			\draw[cut] (Enp1) -- (En);
			\draw[cut] (Enm1) -- (Enm2);
			\draw[cut] (E4)   -- (E3);
			\draw[cut] (E2)   -- (E1);
			
			\foreach \P in {Enp1,En,Enm1,Enm2,E4,E3,E2,E1}{
				\node[point] at (\P) {};
			}
			
			\node[anchor=north east, font=\scriptsize] at ($(Enp1)+(-0.03,0.08)$) {$E_{n+1}^2$};
			\node[anchor=north west, font=\scriptsize] at ($(En)+( 0.03,0.08)$) {$E_n^2$};
			
			\node[anchor=north east, font=\scriptsize] at ($(Enm1)+(-0.03,0.08)$) {$E_{n-1}^2$};
			\node[anchor=north west, font=\scriptsize] at ($(Enm2)+( 0.03,0.08)$) {$E_{n-2}^2$};
			
			\node[anchor=north east, font=\scriptsize] at ($(E4)+(-0.03,0.08)$) {$E_4^2$};
			\node[anchor=north west, font=\scriptsize] at ($(E3)+( 0.03,0.08)$) {$E_3^2$};
			
			\node[anchor=north, font=\scriptsize] at ($(E2)+(-0.20,0.08)$) {$E_2^2$};
			\node[anchor=west, font=\scriptsize]  at ($(E1)+(0.06,-0.02)$) {$E_1^2$};
			
			\node at (0,0.04) {$\cdots$};
			
			\draw[acycle, cyclearrow=.23]
			(-3.75,0) ellipse [x radius=0.62, y radius=0.30];
			
			\draw[acycle, cyclearrow=.23]
			(-1.55,0) ellipse [x radius=0.62, y radius=0.30];
			
			\draw[acycle, cyclearrow=.23]
			( 1.55,0) ellipse [x radius=0.62, y radius=0.30];
			
			\node[red!75!black, anchor=south, font=\small] at (-3.75,0.33) {$\widehat a_n$};
			\node[red!75!black, anchor=south, font=\small] at (-1.55,0.33) {$\widehat a_{n-1}$};
			\node[red!75!black, anchor=south, font=\small] at ( 1.55,0.33) {$\widehat a_1$};
			
			\draw[bcycle, cyclearrow=.55]
			(E1) .. controls (3.75,0.60) and (2.20,0.60) .. (1.55,0.03);
			
			\draw[bcycle, cyclearrow=.55]
			(E1) .. controls (3.25,1.10) and (-0.45,1.10) .. (-1.55,0.03);
			
			\draw[bcycle, cyclearrow=.57]
			(E1) .. controls (2.45,1.50) and (-2.55,1.50) .. (-3.75,0.03);
			
			\draw[breturn, cyclearrow=.55]
			(1.55,-0.03) .. controls (2.15,-0.55) and (3.65,-0.50) .. (E1);
			
			\draw[breturn, cyclearrow=.45]
			(-1.55,-0.03) .. controls (-0.25,-0.88) and (3.00,-0.80) .. (E1);
			
			\draw[breturn, cyclearrow=.45]
			(-3.75,-0.03) .. controls (-2.25,-1.18) and (2.85,-1.12) .. (E1);
			
			\node[blue!70!black, font=\small] at (2.85,0.25) {$\widehat b_1$};
			\node[blue!70!black, font=\small] at (0.65,0.55) {$\widehat b_{n-1}$};
			\node[blue!70!black, font=\small] at (-0.65,0.90) {$\widehat b_n$};
			
		\end{tikzpicture}
		
		\caption{
			The homology basis \(\{\widehat{a}_i,\widehat{b}_i\}_{i=1}^{n}\) is drawn on the
			quotient curve \(\widehat{\mathcal X}_n\) associated with the symmetry
			\(z\mapsto -z\) of the DNLS equation. The red cycles are the \(\widehat{a}\)-cycles
			surrounding the branch cuts, and the blue cycles are the corresponding
			\(\widehat{b}\)-cycles. The branch points \(E_j^2\) have been placed on the same line
			for clarity.}
		\label{fig:genus}
	\end{figure}

	\subsection{Consequences for the model RHP of the DNLS equation}
	\label{consequences-for-DNLS}
	
	We now summarize how the quotient construction enters the RHP of the DNLS equation used in the sequel.
	The reduced elliptic RHP in Section~\ref{subsec:elliptic-model}, and the higher genus model problems arising after the 
	\(g\)-function deformation, have branch cuts which are invariant under the involution
	\[
	z\mapsto -z .
	\]
	Although the corresponding symmetric curve in the original \(z\)-plane has genus-$(2n+1)$, 
	the jump matrices, normalization, and Abelian differentials entering the model problem are invariant under this involution. 
	Therefore the anti-invariant Abelian directions do not enter the solution.
	
	Consequently, the model problem is constructed on the quotient curve
	\[
	\widehat{\mathcal X}_n=\mathcal X_n/\rho,
	\qquad
	\lambda=z^2,
	\]
	whose genus is \(n\). The normalized holomorphic differentials, period matrix, Abel map, 
	and Riemann theta function used in the solution of model problem are those of \(\widehat{\mathcal X}_n\). 
	After pullback by
	\[
	\Pi:\mathcal X_n\to\widehat{\mathcal X}_n,
	\qquad
	\Pi(z,\mathcal R)=(z^2,\mathcal R),
	\]
	they give the symmetric solution of the original \(z\)-plane model problem.
	
	In particular, a direct construction on the full hyperellipti curve would formally involve a Riemann theta function of dimension \(2n+1\), 
	whereas the symmetry of the DNLS equation reduces to an effective dimension \(n\). 
	Thus, it is concluded that
	\[
	\text{effective genus}=n,
	\qquad
	\text{unreduced symmetric genus}=2n+1.
	\]
	For the elliptic soliton gas given by RHP~\ref{prob:elliptic-rhp} in Section~\ref{subsec:elliptic-model}, consider \(n=1\), then the full curve in the \(z\)-plane has genus \(3\), 
	but the quotient curve is elliptic, and the large-\(x\) background is described by a one-dimensional theta function. 
	Similarly, in the long-time analysis, the two-phase and three-phase regions correspond to quotient genera 
	\(n=2\) and \(n=3\), respectively, although the unreduced symmetric curves have genera \(5\) and \(7\), respectively.
	\par
	This reduction is not caused by a collision of branch points. The full curve remains smooth; the reduction occurs 
	because the RHP excites only the \(\rho\)-invariant part of the Abelian geometry, which is precisely the pullback of 
	the Abelian geometry of the quotient curve.

	\section{Large-$x$ asymptotics for the elliptic soliton gas}
	\label{sec:large-x}
	
	This section proves Theorem \ref{spatial} by studying the large-space behavior of the elliptic soliton gas at $t=0$, i.e., $u(x,0)$, based on the RHP~\ref{prob:elliptic-rhp}.  Thus the phase in the jump matrix is
	$
	\theta(x,0,z)=xz^2,
	$
	and the jump contour $\Sigma_{\rm ell}$ is given by \eqref{jump-contours-ell}.
	The purpose of this section is twofold.  First, we show that the soliton gas is exponentially small as $x\to+\infty$.  Second, we show that as $x\to-\infty$ the solution approaches a one-phase finite-gap background.  The second statement is the first place in this paper where the genus reduction mechanism in Section~\ref{sec:genus-reduction} enters the asymptotic analysis in an essential way.
	\par
	Throughout this section, we assume that the intrinsic data associated with the elliptic condensation domain are admissible in the following sense. The density function $\varrho$ in \eqref{eq:elliptic-density} is nonzero in the interior of the mother body segments, and near every endpoint $E$ of $\Sigma_{\rm ell}$ it has the form
	\begin{equation}\label{density-rho}
		\varrho(z)=(z-E)^{1/2}\varrho_E(z),
		\qquad
		\varrho_E(E)\neq0,
	\end{equation}
	with the analogous Schwarz conjugate behavior on $\mathscr I^*\cup(-\mathscr I^*)$.
	
	\subsection{Exponential decay as \texorpdfstring{$x\to+\infty$}{x to +infinity}}
	\label{subsec:large-x-right}
	
	The right-hand asymptotics follows from a direct small norm argument. On 
	\(\mathscr I\cup(-\mathscr I)\), one has \(\operatorname{Im} z^2>0\), while on 
	\(\mathscr I^*\cup(-\mathscr I^*)\), one has \(\operatorname{Im} z^2<0\). Hence, for \(x>0\), we have
	\[
	\left|e^{2ixz^2}\right|
	=
	e^{-2x\operatorname{Im}z^2},
	\qquad z\in \mathscr I\cup(-\mathscr I),
	\]
	whereas
	\[
	\left|e^{-2ixz^2}\right|
	=
	e^{2x\operatorname{Im}z^2},
	\qquad z\in \mathscr I^*\cup(-\mathscr I^*).
	\]
	Since \(\operatorname{Im}z^2\) is bounded away from zero on these contour segments, the jump matrix of RHP~\ref{prob:elliptic-rhp} satisfies
	\[
	\|J(x,0,\cdot)-\mathbb I\|_{L^2(\Sigma_{\rm ell})\cap L^\infty(\Sigma_{\rm ell})}
	=
	\mathcal{O}(e^{-c_rx}),
	\qquad x\to+\infty,
	\]
	for some \(c_r>0\).
	
	The Beals--Coifman small norm theory then gives
	\[
	m(x,0,z)=\mathbb I+\mathcal{O}(e^{-c_rx}),
	\qquad x\to+\infty,
	\]
	uniformly for \(z\) on compact subsets of \(\mathbb C\setminus\Sigma_{\rm ell}\). Moreover, expanding the Cauchy representation at infinity yields
	\[
	m(x,0,z)
	=
	\mathbb I+\frac{m_1(x)}{z}+\mathcal{O}(z^{-2}),
	\qquad
	m_1(x)=\mathcal{O}(e^{-c_rx}).
	\]
	Therefore, by the reconstruction formula, it follows that
	\[
	u(x,0)=2i[m_1(x)]_{12}
	=
	\mathcal{O}(e^{-c_rx}),
	\qquad x\to+\infty .
	\]
	
	\subsection{Asymptotic behavior as $x \rightarrow -\infty$}
	\label{subsec:large-x-g}
	
	The asymptotics as $x\to-\infty$ is qualitatively different. The
	exponentials in the jump matrix \eqref{eq:elliptic-jump-matrix} no longer
	decay on the original contour. Thus introduce a $g$-function which converts
	the jumps on the main bands into constant jumps. The jumps on the
	lens boundaries are exponentially small away from the band endpoints; the
	endpoint neighborhoods are treated by local Bessel parametrices.
	
	Introduce the symmetric curve
	\[
	\mathcal X_1:\ 
	\mathcal R_1(z)^2
	=
	(z^2-E_1^2)(z^2-\bar E_1^{\,2})
	(z^2-E_2^2)(z^2-\bar E_2^{\,2}),
	\]
	with branch cuts on $\Sigma_{\rm ell}$, and choose the branch such that
	\[
	\mathcal R_1(z)=z^4+\mathcal O(z^2),
	\qquad z\to\infty.
	\]
	Set $S_E:=\operatorname{Re}(E_1^2+E_2^2)$ and define
	\begin{equation}\label{d-Omega-0}
		d\Omega_0(z)
		=
		\frac{z^5-S_Ez^3+\tau z}{\mathcal R_1(z)}\,dz,
	\end{equation}
	where $\tau$ is determined by
	\[
	\int_{\bar E_1}^{E_1}d\Omega_0=0,
	\qquad
	\tau
	=-
	\frac{\displaystyle\int_{\bar E_1}^{E_1}
		\frac{z^5-S_Ez^3}{\mathcal R_1(z)}\,dz}
	{\displaystyle\int_{\bar E_1}^{E_1}
		\frac{z}{\mathcal R_1(z)}\,dz}.
	\]
	We assume that the denominator above and the numerator of
	$d\Omega_0$ at every branch point are nonzero. Define
	\begin{equation}\label{g-fun-x}
		g(z)=-z^2+2\int_{E_2}^{z}d\Omega_0,
		\qquad
		\Phi(z):=g(z)+z^2.
	\end{equation}
	The additive constant is chosen so that
	\begin{equation}\label{large x of g}
		\begin{cases}
			g_+(z)+g_-(z)=-2z^2,
			&z\in\Sigma_{\rm ell},\\
			g_+(z)-g_-(z)=\Omega,
			&z\in\Sigma_{\rm gap},
		\end{cases}
		\qquad
		\Omega:=2\int_{E_2}^{E_1}d\Omega_0.
	\end{equation}
	Moreover, it is seen that $g=g^*$ and
	\begin{equation}\label{g-infty-x}
		g(z)=g_\infty+\mathcal O(z^{-2}),
		\qquad z\to\infty.
	\end{equation}
	At a branch point $E\in\{\pm E_j,\pm\bar E_j\}_{j=1}^2$, the function
	$g$ is bounded and
	\begin{equation}\label{eq:fixed-endpoint-phase-x}
		\Phi(z)-\Phi(E)
		=\beta_E(z-E)^{1/2}
		\bigl(1+\mathcal O(z-E)\bigr),
		\qquad \beta_E\neq0.
	\end{equation}
	
	Take the first transformation
	\begin{equation}\label{transformation-1}
		m^{(1)}(x,z)
		=e^{-ig_\infty x\sigma_3}
		m(x,0,z)e^{ig(z)x\sigma_3}.
	\end{equation}
	On $\Sigma_{\rm ell}$, the condition \eqref{large x of g} gives $m^{(1)}_{+}(x,z)=m^{(1)}_{-}(x,z)J^{(1)}(x,z)$ with
	\[
	J^{(1)}(x,z)
	=
	\begin{cases}
		\begin{pmatrix}
			e^{ix(g_+-g_-)}&0\\
			-\varrho(z)&e^{-ix(g_+-g_-)}
		\end{pmatrix},
		&z\in\mathscr I\cup(-\mathscr I),\\[4mm]
		\begin{pmatrix}
			e^{ix(g_+-g_-)}&\varrho^*(z)\\
			0&e^{-ix(g_+-g_-)}
		\end{pmatrix},
		&z\in\mathscr I^*\cup(-\mathscr I^*).
	\end{cases}
	\]
	
	We next introduce a scalar function $F$, analytic and nonzero away from
	the bands and gaps, satisfying
	\begin{equation}\label{eq:F-large-x-jumps}
		\begin{cases}
			F_+F_-=\varrho^{-1},
			&z\in\mathscr I\cup(-\mathscr I),\\
			F_+F_-=\varrho^*,
			&z\in\mathscr I^*\cup(-\mathscr I^*),\\
			F_+/F_-=e^\kappa,
			&z\in\Sigma_{\rm gap}.
		\end{cases}
	\end{equation}
	The constant $\kappa$ is fixed by the full period condition
	\[
	\oint_{\widehat a}d\log F=0,
	\]
	which makes $F$ single-valued. At infinity, it follows that
	\[
	F(z)=F_\infty+\mathcal O(z^{-1}),
	\qquad F_\infty\neq0,
	\]
	and, since the function $\varrho$ has a square-root zero at every fixed endpoint,
	$F$ has the fourth-root behavior: it is of order
	$(z-E)^{-1/4}$ on the bands carrying $\varrho^{-1}$ and of order
	$(z-E)^{1/4}$ on their Schwarz conjugate bands.
	
	Define
	\begin{equation}\label{transformation-2}
		m^{(2)}(x,z)
		=F_\infty^{-\sigma_3}m^{(1)}(x,z)F(z)^{\sigma_3},
	\end{equation}
	then it is derived that $m^{(2)}_{+}(x,z)=m^{(2)}_{-}(x,z)J^{(2)}(x,z)$ with
	\[
	J^{(2)}(x,z)
	=
	\begin{cases}
		\begin{pmatrix}
			e^{ix(g_+-g_-)}F_+F_-^{-1}&0\\
			-1&e^{-ix(g_+-g_-)}F_-F_+^{-1}
		\end{pmatrix},
		&z\in\mathscr I\cup(-\mathscr I),\\[5mm]
		\begin{pmatrix}
			e^{ix(g_+-g_-)}F_+F_-^{-1}&1\\
			0&e^{-ix(g_+-g_-)}F_-F_+^{-1}
		\end{pmatrix},
		&z\in\mathscr I^*\cup(-\mathscr I^*),\\[3mm]
		e^{(ix\Omega+\kappa)\sigma_3},
		&z\in\Sigma_{\rm gap}.
	\end{cases}
	\]
	For $z\in\mathscr I\cup(-\mathscr I)$ this jump factors as
	\[
	\begin{pmatrix}
		1&-\dfrac{e^{-2ix\Phi_-(z)}}{\varrho(z)F_-(z)^2}\\
		0&1
	\end{pmatrix}
	\begin{pmatrix}0&1\\-1&0\end{pmatrix}
	\begin{pmatrix}
		1&-\dfrac{e^{-2ix\Phi_+(z)}}{\varrho(z)F_+(z)^2}\\
		0&1
	\end{pmatrix},
	\]
	and the conjugate bands have the corresponding lower-triangular
	factorization. Define
	\begin{equation}\label{transformation-3}
		m^{(3)}(x,z)=m^{(2)}(x,z)G(x,z),
	\end{equation}
	where
	\begin{equation}
		G(x,z) =
		\begin{cases}
			\begin{pmatrix}
				1 & -\displaystyle \frac{e^{-2 i x (g(z) + z^2)}}{\varrho(z) \, F(z)^2} \\[6pt]
				0 & 1
			\end{pmatrix}
			& z \in \pm \mathscr{U}_{+}, \quad \begin{pmatrix}
				1 & 0 \\[4pt]
				-\displaystyle \frac{e^{2 i x (g(z) + z^2)} \, F(z)^2}{\varrho^*(z)} & 1
			\end{pmatrix}
			\ \ \ \ \ z \in \pm \mathscr{U}_{+}^*, \\[6pt]
			\begin{pmatrix}
				1 & \displaystyle \frac{e^{-2 i x (g(z) + z^2)}}{\varrho(z) \, F(z)^2} \\[6pt]
				0 & 1
			\end{pmatrix}
			& z \in \pm \mathscr{U}_{-}, \quad
			\begin{pmatrix}
				1 & 0 \\[4pt]
				\displaystyle \frac{e^{2 i x (g(z) + z^2)} \, F(z)^2}{\varrho^*(z)} & 1
			\end{pmatrix}
			\ \ \ \ \ \ \ \ z \in \pm \mathscr{U}_{-}^*, \\[6pt]
			\mathbb{I},
			& \text{otherwise}.
		\end{cases}
	\end{equation}
	The lenses are chosen according to the sign of $\operatorname{Im}\Phi$.
	Away from fixed endpoint disks, the resulting lens jumps are
	$\mathcal O(e^{-c|x|})$ as $x\to-\infty$. The lens configuration is the
	one depicted in Figure~\ref{figure:lenses}.
	
	\begin{figure}
		\centering
		\begin{tikzpicture}[scale=1.5]
			\draw[-, thick, line width=2.5pt, black] (-4.5,0) -- (-4,0);
			\draw[-, postaction={decorate}, decoration={markings, mark=at position 0.5 with {\arrow{<}}}, thick,line width=2.5pt, black] (1.25,0) -- (-1.25,0);
			\draw[-, thick,line width=2.5pt, black] (4.5,0) -- (4,0);

			\fill[gray!20] (4,0) arc (0:180: 1.375 and 0.9) -- cycle;
			\draw[-,dashed, postaction={decorate}, decoration={markings, mark=at position 0.5 with {\arrow{<}}},thick, line width=1.5pt, black] (4,0) arc (0:180: 1.375 and 0.9);
			
			\fill[gray!20] (4,0) arc (360:180: 1.375 and 0.9) -- cycle;
			\draw[-,dashed, postaction={decorate}, decoration={markings, mark=at position 0.5 with {\arrow{<}}},thick, line width=1.5pt, black] (4,0) arc (360:180: 1.375 and 0.9);
			
			\fill[gray!20] (-1.25,0) arc (0:180: 1.375 and 0.9) -- cycle;
			\draw[-,dashed, postaction={decorate}, decoration={markings, mark=at position 0.5 with {\arrow{<}}},thick, line width=1.5pt, black] (-1.25,0) arc (0:180: 1.375 and 0.9);
			
			\fill[gray!20] (-1.25,0) arc (360:180: 1.375 and 0.9) -- cycle;
			\draw[-,dashed, postaction={decorate}, decoration={markings, mark=at position 0.5 with {\arrow{<}}},thick, line width=1.5pt, black] (-1.25,0) arc (360:180: 1.375 and 0.9);
			
			\draw[-, dashed,line width=2.5pt, black] (1.25,0) -- (4,0);
			\draw[-, dashed,line width=2.5pt, black] (-1.25,0) -- (-4,0);

			\filldraw[black](1.25,0) circle (1.25pt);
			\node at (1.25,0) [above left, black] {$\bm{E_1}$};
			\filldraw[black](4,0) circle (1.25pt);
			\node at (4,0) [above right, black] {$\bm{E_2}$};
			\filldraw[black](-1.25,0) circle (1.25pt);
			\node at (-1.25,0) [above right, black] {$\bm{\bar{E}_1}$};
			\filldraw[black](-4,0) circle (1.25pt);
			\node at (-4,0) [above left, black] {$\bm{\bar{E}_2}$};
			
			\node at (2.6,0.2) [above, black,font=\Large] {$\mathscr{U}_+$};
			\node at (2.6,-0.2) [below, black,font=\Large] {$\mathscr{U}_-$};
			\node at (-2.6,0.2) [above, black,font=\Large] {$\mathscr{U}_+^*$};
			\node at (-2.6,-0.2) [below, black,font=\Large] {$\mathscr{U}_-^*$};
			
			\node at (2.6,0.9) [above, black] {$\mathscr{B}_+$};
			\node at (2.6,-0.9) [below, black] {$\mathscr{B}_-$};
			\node at (-2.6,0.9) [above, black] {$\mathscr{B}_+^*$};
			\node at (-2.6,-0.9) [below, black] {$\mathscr{B}_-^*$};
		\end{tikzpicture}
		\caption{The lenses of segments $\mathscr{I}$ and $\mathscr{I}^*$ (the original image was rotated 90 degrees clockwise for visual clarity).}
		\label{figure:lenses}
	\end{figure}
	
	The remaining outer model has constant jumps
	\begin{equation}\label{model-prob-x}
		J^{\rm mod}(x,z)
		=
		\begin{cases}
			\begin{pmatrix}0&1\\-1&0\end{pmatrix},
			&z\in\Sigma_{\rm ell},\\[3mm]
			e^{(ix\Omega+\kappa)\sigma_3},
			&z\in\Sigma_{\rm gap}.
		\end{cases}
	\end{equation}
	By the genus-reduction mechanism given in Section~\ref{sec:genus-reduction},
	this model descends to the quotient elliptic curve
	\[
	\widehat{\mathcal X}_1:\ 
	\widehat{\mathcal R}_1(\lambda)^2
	=
	(\lambda-E_1^2)(\lambda-\bar E_1^{\,2})
	(\lambda-E_2^2)(\lambda-\bar E_2^{\,2}),
	\qquad \lambda=z^2.
	\]
	Let $\widehat\nu$ be normalized by
	$\oint_{\widehat a}\widehat\nu=1$ and set
	\begin{equation}\label{period-matrix-x}
		\widehat{\mathcal P}
		=\oint_{\widehat b}\widehat\nu.
	\end{equation}
	The Riemann theta function is
	\[
	\Theta(w;\widehat{\mathcal P})
	=\sum_{k\in\mathbb Z}
	\exp\left(2\pi ikw+\pi ik^2\widehat{\mathcal P}\right).
	\]
	Let
	$\widehat{\mathcal A}(P)
	=\int_{\infty^+}^{P}\widehat\nu.$
	We set
	\begin{equation}\label{limit-of-abel-map-x}
		\widehat{\mathcal A}_\infty
		:=\lim_{z\to\infty}\widehat{\mathcal A}(z^-),
	\end{equation}
	and define
	\begin{equation}\label{delta-x}
		\Delta(x)=\frac{x\Omega-i\kappa}{2\pi}.
	\end{equation}
	
	Introduce
	\[
	\alpha_1(z)
	=
	\left[
	\frac{(z-E_1)(z-\bar E_2)(z+\bar E_1)(z+E_2)}
	{(z+E_1)(z+\bar E_2)(z-\bar E_1)(z-E_2)}
	\right]^{1/4},
	\qquad \alpha_1(\infty)=1,
	\]
	and
	\[
	Q^\pm(z)
	:=
	\frac{\Theta(\widehat{\mathcal A}(z^\pm)-\Delta(x);
		\widehat{\mathcal P})}
	{\Theta(\widehat{\mathcal A}(z^\pm);\widehat{\mathcal P})}.
	\]
	The unnormalized matrix expressed by Riemann theta function is
	\[
	\mathcal N(x,z)
	=
	\frac{\Theta(0;\widehat{\mathcal P})}
	{2\Theta(\Delta(x);\widehat{\mathcal P})}
	\begin{pmatrix}
		Q^+(z)(\alpha_1+\alpha_1^{-1})
		&-iQ^-(z)(\alpha_1-\alpha_1^{-1})\\[1mm]
		iQ^+(z)(\alpha_1-\alpha_1^{-1})
		&Q^-(z)(\alpha_1+\alpha_1^{-1})
	\end{pmatrix}.
	\]
	It has the jumps in \eqref{model-prob-x}, but
	\[
	\mathcal N(x,\infty)
	=\operatorname{diag}(1,T(x)),
	\qquad
	T(x)
	=
	\frac{\Theta(0;\widehat{\mathcal P})
		\Theta(\widehat{\mathcal A}_\infty-\Delta(x);
		\widehat{\mathcal P})}
	{\Theta(\Delta(x);\widehat{\mathcal P})
		\Theta(\widehat{\mathcal A}_\infty;
		\widehat{\mathcal P})}.
	\]
	Hence the normalized outer model problem is
	\begin{equation}\label{eq:large-x-normalized-outer}
		m^{\rm out}(x,z)
		=(\mathcal N(x,\infty))^{-1}\mathcal N(x,z),
		\qquad
		m^{\rm out}(x,z)=\mathbb I+\mathcal O(z^{-1}).
	\end{equation}
	
	It remains to construct the local parametrices. Put $X:=-x>0$ and let
	\[
	\mathcal P_{\rm fix}
	=\{\pm E_1,\pm E_2,
	\pm\bar E_1,\pm\bar E_2\}.
	\]
	For each $E\in\mathcal P_{\rm fix}$, choose a small fixed disk $D_E$,
	with all disks mutually disjoint. By
	\eqref{eq:fixed-endpoint-phase-x}, one can choose a local branch
	\[
	\phi_E(z)=-i\varepsilon_E
	\bigl(\Phi(z)-\Phi(E)\bigr),
	\qquad \varepsilon_E\in\{\pm1\},
	\]
	so that the relevant lens exponential is decaying, and
	\begin{equation}\label{eq:large-x-bessel-coordinate}
		\zeta_E(z)=X^2\phi_E(z)^2
	\end{equation}
	is conformal in $D_E$. Since $\varrho(z)\sim(z-E)^{1/2}$, the local
	problem is the standard Bessel model of order $1/2$. Denote its solution
	by $\Psi_{\rm Bes}(\zeta)$. We use the normalization
	\[
	\Psi_{\rm Bes}(\zeta)
	=
	\zeta^{-\sigma_3/4}M_{\rm B}
	\left(\mathbb I+\mathcal O(\zeta^{-1/2})\right)
	e^{-2\zeta^{1/2}\sigma_3},
	\qquad \zeta\to\infty,
	\]
	where $M_{\rm B}$ is a constant unimodular matrix. The local parametrix has the form
	\begin{equation}\label{eq:large-x-bessel-parametrix}
		P_E(z)
		=E_E(z)
		\Psi_{\rm Bes}(\zeta_E(z))
		e^{2X\phi_E(z)\sigma_3}C_E(z),
		\qquad z\in D_E,
	\end{equation}
	where $E_E$ and $C_E$ are analytic and invertible in $D_E$. They are
	chosen so that $P_E$ has the exact local jumps and the same endpoint
	behavior as the transformed solution. Since
	$|\zeta_E|\sim X^2$ on $\partial D_E$, we have
	\begin{equation}\label{eq:large-x-bessel-matching}
		P_E(z)
		=m^{\rm out}(x,z)
		\left(\mathbb I+\mathcal O(X^{-1})\right),
		\qquad z\in\partial D_E.
	\end{equation}
	
	Define the global approximation
	\[
	m^{\rm app}(x,z)
	=
	\begin{cases}
		P_E(z),&z\in D_E,
		\quad E\in\mathcal P_{\rm fix},\\
		m^{\rm out}(x,z),&z\notin\displaystyle\bigcup_E D_E,
	\end{cases}
	\]
	and the error matrix
	\[
	\mathcal E(x,z)
	=m^{(3)}(x,z)\bigl(m^{\rm app}(x,z)\bigr)^{-1}.
	\]
	On the lens contours outside the disks,
	$J_{\mathcal E}=\mathbb I+\mathcal O(e^{-cX})$, whereas on the Bessel
	circles \eqref{eq:large-x-bessel-matching} gives
	$J_{\mathcal E}=\mathbb I+\mathcal O(X^{-1})$. Consequently, one has
	\[
	\|J_{\mathcal E}-\mathbb I\|_{L^2\cap L^\infty}
	=\mathcal O(X^{-1}),
	\]
	and the small-norm theorem yields
	\begin{equation}\label{small-norm-x}
		\mathcal E(x,z)
		=\mathbb I+
		\mathcal O\left(\frac{X^{-1}}{1+|z|}\right),
		\qquad x\to-\infty.
	\end{equation}
	In particular, the coefficient of $z^{-1}$ in the expansion of
	$\mathcal E$ is $\mathcal O(X^{-1})$.
	
	Tracing back \eqref{transformation-1}--\eqref{transformation-3} and using
	the reconstruction formula give
	\begin{equation}\label{asympotics-x-l}
		\mathcal U_-(x)
		=
		2i\bigl(\operatorname{Im}E_2-
		\operatorname{Im}E_1\bigr)
		T(x)F_\infty^2e^{2ig_\infty x},
	\end{equation}
	where
	\begin{equation}\label{F-infty}
		F_\infty
		=
		\exp\Bigg\{
		\frac{i}{\pi}
		\Bigg(
		-\int_{E_1}^{E_2}
		\frac{s\log\varrho(s)}{\mathcal R_{1+}(s)}\,ds
		+\int_{\bar E_2}^{\bar E_1}
		\frac{s\log\varrho^*(s)}{\mathcal R_{1+}(s)}\,ds
		+\int_{E_1}^{\bar E_1}
		\frac{s\kappa}{\mathcal R_1(s)}\,ds
		\Bigg)
		\Bigg\}.
	\end{equation}
	The logarithm branches in \eqref{F-infty} are those fixed in the scalar RHP.
	Therefore, it follows that
	\[
	u(x,0)
	=\mathcal U_-(x)+\mathcal O(|x|^{-1}),
	\qquad x\to-\infty.
	\]
	
	\begin{remark}
		The full hyperelliptic curve in the $z$-plane has genus $3$, but the theta
		function in $\mathcal U_-(x)$ is one-dimensional because the model RHP
		descends to the quotient curve $\widehat{\mathcal X}_1$. This is the
		large-$x$ manifestation of the genus-reduction theorem in
		Section~\ref{sec:genus-reduction}.
	\end{remark}

	\section{Long-time asymptotics of the elliptic soliton gas}\label{long-time- asymptotics}
	
	This section proves the Theorem \ref{time degen} and Theorem \ref{time gener} by studying the long-time asymptotic behaviors of the elliptic soliton gas potential (\ref{eq:elliptic-reconstruction}) in the self-similar regime
	\begin{equation}
		\xi=\frac{x}{t},
		\qquad
		t\to+\infty.
	\end{equation}
	Our aim is to determine the leading asymptotic behavior of the solution in the different sectors of the upper \((x,t)\)-plane selected by the parameter \(\xi\).
	The analysis is based on the Deift--Zhou nonlinear steepest descent method \cite{Deift-Zhou} for the RHP~\ref{prob:elliptic-rhp}. The oscillatory phase in the jump matrix \eqref{eq:elliptic-jump-matrix} is
	\begin{equation}
		\theta(x,t,z)
		=
		xz^2+2tz^4
		=
		t\bigl(\xi z^2+2z^4\bigr).
	\end{equation}
	Based on the transformation $\lambda=z^2$ given in Section~\ref{sec:genus-reduction}, it is natural to introduce the quotient spectral variable
	\begin{equation}
		\widehat{\theta}(\xi,\lambda)
		=
		\xi\lambda+2\lambda^2.
	\end{equation}
	This transformation incorporates the even spectral symmetry of the spectral problem associated with the DNLS equation and reduces the long-time analysis to an effective quotient genus problem on the quotient spectral plane.
	\par
	The asymptotic behavior is determined by the critical graph of an appropriate \(g\)-function in the \(\lambda\)-plane. As \(\xi\) varies, the topology of this graph changes, giving rise to different finite-gap sectors described by Riemann theta functions, together with a right-most exponentially decaying sector. In the following subsections, we construct the relevant \(g\)-functions, derive the associated model RHP, and obtain the long-time asymptotic formulas through the corresponding local parametrices and small norm error analysis.

	\begin{remark}
		All asymptotic formulas are uniform for \(\xi=x/t\) in compact subsets of each open sector. The transition regimes near the critical rays are not considered here, since they require a separate local analysis.
	\end{remark}

	Denote the image of the upper mother body segment \(\mathscr I=[E_1,E_2]\) under the map \(z\mapsto z^2\) as
	\begin{equation}
		\Lambda=\{z^2:z\in \mathscr I\},
	\end{equation}
	and let
	$
	\Lambda^*=\overline{\Lambda}
	$
	be its Schwarz conjugate image. We have
	\begin{equation}\label{eq:im-quotient-phase}
		\operatorname{Im}\widehat{\theta}(\xi,\lambda)
		=
		\operatorname{Im}(\xi\lambda+2\lambda^2)
		=
		\operatorname{Im}\lambda\bigl(\xi+4\operatorname{Re}\lambda\bigr).
	\end{equation}
	Since \(\Lambda\) lies in the upper half-plane, one has \(\operatorname{Im}\lambda>0\) on \(\Lambda\). Hence
	$
	\operatorname{Im}\widehat{\theta}(\xi,\lambda)>0,
	$
	provided that
	\[
	\xi+4\operatorname{Re}\lambda>0,
	\quad
	\lambda\in\Lambda.
	\]
	This leads to the definition of the right vacuum boundary
	\begin{equation}
		\xi_{\mathrm{vac}}
		:=
		-4\min_{\lambda\in\Lambda}\operatorname{Re}\lambda.
		\label{eq:vacuum-boundary}
	\end{equation}

	\paragraph{The Vacuum Sector:}
	
	We first consider the sector
	$
	\xi>\xi_{\mathrm{vac}}.
	$
	By \eqref{eq:im-quotient-phase} and the definition of
	\(\xi_{\mathrm{vac}}\) in (\ref{eq:vacuum-boundary}), the phase funciton has the favorable sign on
	\(\Lambda\). More precisely, for every compact set
	$
	K\Subset(\xi_{\mathrm{vac}},+\infty),
	$
	there exists a constant \(c_K>0\) such that
	$
	\operatorname{Im}\widehat{\theta}(\xi,\lambda)
	\geq c_K.
	$
	Hence,
	\begin{equation}
		\left|e^{2it\widehat{\theta}(\xi,\lambda)}\right|
		\leq e^{-2c_Kt},
		\qquad
		\xi\in K,\quad \lambda\in\Lambda.
	\end{equation}
	By the Schwarz symmetry, the same exponential decay holds for the
	corresponding jump entries on \(\Lambda^*\). Therefore,
	\[
	\|J(\xi,z)-\mathbb I\|_{L^2(\Sigma_{\mathrm{ell}})
		\cap L^\infty(\Sigma_{\mathrm{ell}})}
	=\mathcal{O}(e^{-2c_Kt}),
	\qquad
	\xi\in K.
	\]
	The RHP is therefore a small norm problem.  Hence, uniformly for $\xi\in K$, it follows that
	\[
	m(\xi,z)
	=
	\mathbb{I}+\mathcal{O}(e^{-2c_Kt}),
	\qquad
	t\to+\infty,
	\]
	for $z$ in compact subsets of $\mathbb C\setminus\Sigma_{\rm ell}$.  Moreover, the expansion at infinity satisfies
	\[
	m(\xi,z)
	=
	\mathbb{I}+\frac{m_1(\xi,z)}{z}+\mathcal{O}(z^{-2}),
	\qquad
	m_1(\xi,z)=\mathcal{O}(e^{-2c_Kt}).
	\]
	Using the reconstruction formula \eqref{eq:elliptic-reconstruction}, we obtain that
	\begin{equation}
		u(x,t)
		=
		\mathcal{O}(e^{-2c_Kt}),
		\qquad
		\xi\in K.
		\label{eq:vacuum-asymptotics}
	\end{equation}
	Thus, \(\xi>\xi_{\mathrm{vac}}\) is a vacuum sector, whose outer model is
	the identity matrix, and no nontrivial \(g\)-function or
	parametrix expressed by Riemann theta function is required.

	\paragraph{The initial genus-1 \(g\)-function:}
	
	We now turn to the oscillatory region
	$\xi<\xi_{\mathrm{vac}}$.
	In this case, the original jump matrix is no longer uniformly close to the identity. Indeed, by \eqref{eq:im-quotient-phase},
	there is a portion of \(\Lambda\) on which
	$
	\operatorname{Im}\widehat{\theta}(\xi,\lambda)<0.
	$
	Hence the exponential factor \(e^{2it\widehat{\theta}}\) grows on that part of the contour. A nonlinear phase deformation is therefore needed.
	
	The first possible deformation is given by a genus-1 quotient \(g\)-function. Set
	$
	A_j=E_j^2$,
	$j=1,2,
	$
	and introduce the quotient curve
	\begin{equation}\label{quotient-curve-t1}
		\widehat{\mathcal X}_1:
		\ 
		\widehat {\mathcal{R}}_1(\lambda)^2
		=
		(\lambda-A_1)(\lambda-A_2)
		(\lambda-\overline{A}_1)(\lambda-\overline{A}_2).
	\end{equation}
	We choose the branch normalized at infinity by
	\begin{equation}
		\widehat {\mathcal{R}}_1(\lambda)
		=
		\lambda^2-S_1\lambda+S_2+\mathcal{O}(\lambda^{-1}),
		\qquad
		\lambda\to\infty,
	\end{equation}
	where
	$
	S_1=\operatorname{Re}(A_1+A_2)
	$ and
	$	S_2
	=
	\frac{1}{2}
	\left[
	|A_1|^2+|A_2|^2
	+2\operatorname{Re}(A_1A_2)
	+2\operatorname{Re}(A_1\overline A_2)
	-
	\bigl(\operatorname{Re}(A_1+A_2)\bigr)^2
	\right].
	$

	Seek a meromorphic differential of the form
	\begin{equation}
		d\widehat\Omega_1(\xi,\lambda)
		=
		4\frac{P_3(\lambda,\xi)}{\widehat {\mathcal{R}}_1(\lambda)}d\lambda,
		\label{eq:genus-one-differential}
	\end{equation}
	where $P_3(\lambda,\xi)$ is a monic cubic polynomial given by
	\begin{equation}
		P_3(\lambda,\xi)
		=
		\lambda^3+a(\xi)\lambda^2+b(\xi)\lambda+w(\xi).
	\end{equation}
	The associated phase function is defined by Abelian integral as
	\begin{equation}
		\widehat\Phi_1(\xi,\lambda)
		:=
		\int_{A_2}^{\lambda}d\widehat\Omega_1(\xi,s)
		+
		\ell_1(\xi),
	\end{equation}
	where the constant \(\ell_1(\xi)\) is chosen to impose the normalization
	\[
	\widehat\Phi_{1,+}(\xi,\lambda)
	+
	\widehat\Phi_{1,-}(\xi,\lambda)
	=
	0
	\]
	on the main band.
	The coefficients of polynomial \(P_3(\lambda,\xi)\) are determined as follows. First, \(d\widehat\Omega_1\) must match the phase function in quotient space at infinity:
	\[
	d\widehat\Omega_1(\xi,\lambda)
	=
	\bigl(4\lambda+\xi+\mathcal{O}(\lambda^{-2})\bigr)d\lambda,
	\qquad
	\lambda\to\infty.
	\]
	Equivalently, the Abelian integral satisfies
	\begin{equation}
		\widehat\Phi_1(\xi,\lambda)
		=
		\widehat\theta(\xi,\lambda)
		+
		\ell_1(\xi)
		+
		\mathcal{O}(\lambda^{-1}),
		\qquad
		\lambda\to\infty.
	\end{equation}
	Expanding \eqref{eq:genus-one-differential} at infinity gives
	$a(\xi)=\frac{\xi}{4}-S_1$, and $b(\xi)=S_2-\frac{S_1\xi}{4}$.
	The remaining coefficient \(w(\xi)\) is fixed by the Boutroux condition
	\begin{equation}
		\operatorname{Im}
		\oint_{\widehat{a}_1}
		d\widehat\Omega_1(\xi,\lambda)
		=
		0,
		\label{eq:genus-one-boutroux}
	\end{equation}
	where \(\widehat{a}_1\) denotes the \(a\)-cycle of the quotient curve $\widehat{\mathcal X}_1$. This condition makes the imaginary part of \(\widehat\Phi_1\) compatible with the required sign distribution in the nonlinear steepest descent analysis.
	
	The genus-1 \(g\)-function in the original spectral variable is defined by
	\begin{equation}
		g_1(\xi,z)
		:=
		\widehat\Phi_1(\xi,z^2)
		-
		\theta(\xi,z),
		\label{eq:genus-one-g-function}
	\end{equation}
	which cancels the leading oscillatory phase on the main band and produces a model problem with constant jump. By the normalization at $z\to\infty$, we have
	$
	g_1(\xi,z)
	=
	g_{\infty,1}(\xi)+\mathcal{O}(z^{-2}).
	$

	\begin{figure}[htbp]
		\centering
		
		\begin{subfigure}[t]{0.48\textwidth}
			\centering
			\resizebox{\linewidth}{!}{%
				\begin{tikzpicture}[scale=0.75]
					\draw[thick] (-3,0) .. controls (-3.25,2) and (-2.75,3) .. (-3,4);
					\draw[thick] (-3,0) .. controls (-2.75,-2) and (-3.25,-3) .. (-3,-4);
					
					\draw[thick] (3,0) .. controls (2.75,2) and (3.25,3) .. (3,4);
					\draw[thick] (3,0) .. controls (3.25,-2) and (2.75,-3) .. (3,-4);

					\draw[->, line width=1pt] (-5,0) -- (5,0);
					\draw[->, line width=1pt] (0,-4) -- (0,4);
					
					\draw[dashed, line width=1pt] (1,-3.5) -- (1,3.5);
					\draw[dashed, line width=1pt] (-1,-3.5) -- (-1,3.5);
					
					\draw[postaction={decorate},thick] (1,1) .. controls (2,1.5) and (2.5,2.5) .. (1,3);
					
					\draw[postaction={decorate},thick] (1,-1) .. controls (2,-1.5) and (2.5,-2) .. (1,-3);
					
					\draw[postaction={decorate},line width=1pt] (1,-1) -- (1,1);
					
					\draw[postaction={decorate},thick] (-1,1) .. controls (-2,1.5) and (-2.5,2) .. (-1,3);
					
					\draw[postaction={decorate},thick] (-1,-1) .. controls (-2,-1.5) and (-2.5,-2) .. (-1,-3);
					
					\draw[postaction={decorate},line width=1pt] (-1,-1) -- (-1,1);
					
					\node at (0,0) [below left, font=\small] {$\mathbf{0}$};
					\filldraw[black] (0,0) circle (1.25pt);
					\node at (5,0) [below, font=\small] {$\operatorname{Re}(z)$};
					
					\node at (0,4) [right, font=\small] {$\operatorname{Im}(z)$};
					\node at (3,0) [below right, font=\scriptsize] {$\zeta_1$};
					\node at (-3,0) [below left, font=\scriptsize] {$-\zeta_1$};
					\filldraw[black] (3,0) circle (1.25pt);
					\filldraw[black] (-3,0) circle (1.25pt);
					
					\node at (1,1) [left,font=\scriptsize] {$E_1$};
					\node at (1,3) [left,font=\scriptsize] {$E_2$};
					\filldraw[black] (1,1) circle (1.25pt);
					\filldraw[black] (1,3) circle (1.25pt);
					
					\node at (1,-1) [left,font=\scriptsize] {$\bar{E}_1$};
					\node at (1,-3) [left,font=\scriptsize] {$\bar{E}_2$};
					\filldraw[black] (1,-1) circle (1.25pt);
					\filldraw[black] (1,-3) circle (1.25pt);
					
					\node at (-1,1) [right,font=\scriptsize] {$-\bar{E}_1$};
					\node at (-1,3) [right,font=\scriptsize] {$-\bar{E}_2$};
					\filldraw[black] (-1,1) circle (1.25pt);
					\filldraw[black] (-1,3) circle (1.25pt);
					
					\node at (-1,-1) [right,font=\scriptsize] {$-E_1$};
					\node at (-1,-3) [below right,font=\scriptsize] {$-E_2$};
					\filldraw[black] (-1,-1) circle (1.25pt);
					\filldraw[black] (-1,-3) circle (1.25pt);
				\end{tikzpicture}
			}
			\caption{$-\infty<\xi<\xi_0$.}
			\label{fig:sign-genus-1-gen}
		\end{subfigure}
		\hfill
		\begin{subfigure}[t]{0.48\textwidth}
			\centering
			\resizebox{\linewidth}{!}{%
				\begin{tikzpicture}[scale=1]
					\draw[->, line width=1pt] (-5,0) -- (5,0);
					\draw[->, line width=1pt] (0,-4) -- (0,4);
					
					\draw[dashed, line width=1pt] (1,-3.5) -- (1,3.5);
					\draw[dashed, line width=1pt] (-1,-3.5) -- (-1,3.5);
					
					\draw[postaction={decorate},thick] (1,1) .. controls (2,0.8) and (2.5,0.8) .. (3,1);
					\draw[postaction={decorate},thick] (1,-1) .. controls (2,-0.8) and (2.5,-0.8) .. (3,-1);
					\draw[postaction={decorate},thick] (-1,1) .. controls (-2,0.8) and (-2.5,0.8) .. (-3,1);
					\draw[postaction={decorate}, thick] (-1,-1) .. controls (-2,-0.8) and (-2.5,-0.8) .. (-3,-1);
					\draw[thick] (5,4) .. controls (4,3.6) and (3.5,3) .. (3,2.5);
					\draw[thick] (5,-4) .. controls (4,-3.6) and (3.5,-3) .. (3,-2.5);
					\draw[thick] (-5,4) .. controls (-4,3.6) and (-3.5,3) .. (-3,2.5);
					\draw[thick] (-5,-4) .. controls (-4,-3.6) and (-3.5,-3) .. (-3,-2.5);
					\draw[postaction={decorate}, postaction={decorate},thick] (1,3) .. controls (2,2.9) and (2.5,2.7) .. (3,2.5);
					\draw[postaction={decorate},postaction={decorate},thick] (1,-3) .. controls (2,-2.9) and (2.5,-2.7) .. (3,-2.5);
					\draw[postaction={decorate},postaction={decorate},thick] (-1,3) .. controls (-2,2.9) and (-2.5,2.7) .. (-3,2.5);
					\draw[postaction={decorate},postaction={decorate},thick] (-1,-3) .. controls (-2,-2.9) and (-2.5,-2.7) .. (-3,-2.5);
					\draw[postaction={decorate},postaction={decorate},thick] (3,2.5) .. controls (3.5,2) and (3.5,1.5) .. (3,1);
					\draw[postaction={decorate},postaction={decorate},thick] (-3,2.5) .. controls (-3.5,2) and (-3.5,1.5) .. (-3,1);
					\draw[postaction={decorate},postaction={decorate},thick] (3,-2.5) .. controls (3.5,-2) and (3.5,-1.5) .. (3,-1);
					\draw[postaction={decorate},postaction={decorate},thick] (-3,-2.5) .. controls (-3.5,-2) and (-3.5,-1.5) .. (-3,-1);
					\draw[postaction={decorate},postaction={decorate},postaction={decorate},postaction={decorate},thick] (3,1) .. controls (3.5,0.7) and (3.5,-0.7) .. (3,-1);
					\draw[postaction={decorate},postaction={decorate},postaction={decorate}, postaction={decorate},thick] (-3,1) .. controls (-3.5,0.7) and (-3.5,-0.7) .. (-3,-1);

					\node at (0,0) [below left, font=\large] {$\mathbf{0}$};
					\filldraw[black] (0,0) circle (1.25pt);
					\node at (5,0) [below, font=\large] {$\operatorname{Re}(z)$};
					\node at (0,4) [right, font=\large] {$\operatorname{Im}(z)$};
					
					\node at (3.37,0) [below right, font=\small] {$\zeta_1$};
					\node at (-3.37,0) [below left, font=\small] {$-\zeta_1$};
					\filldraw[black] (3.37,0) circle (1.25pt);
					\filldraw[black] (-3.37,0) circle (1.25pt);
					
					\node at (1,1) [left,font=\scriptsize] {$E_1$};
					\node at (1,3) [left,font=\scriptsize] {$E_2$};
					\filldraw[black] (1,1) circle (1.25pt);
					\filldraw[black] (1,3) circle (1.25pt);
					
					\node at (1,-1) [left,font=\scriptsize] {$\bar{E}_1$};
					\node at (1,-3) [left,font=\scriptsize] {$\bar{E}_2$};
					\filldraw[black] (1,-1) circle (1.25pt);
					\filldraw[black] (1,-3) circle (1.25pt);
					
					\node at (-1,1) [right,font=\scriptsize] {$-\bar{E}_1$};
					\node at (-1,3) [right,font=\scriptsize] {$-\bar{E}_2$};
					\filldraw[black] (-1,1) circle (1.25pt);
					\filldraw[black] (-1,3) circle (1.25pt);
					
					\node at (-1,-1) [right,font=\scriptsize] {$-E_1$};
					\node at (-1,-3) [below right,font=\scriptsize] {$-E_2$};
					\filldraw[black] (-1,-1) circle (1.25pt);
					\filldraw[black] (-1,-3) circle (1.25pt);
					
					\node at (3,2.5) [right,font=\scriptsize] {$b_2$};
					\node at (3,-2.5) [right,font=\scriptsize] {$\bar{b}_2$};
					\filldraw[black] (3,2.5) circle (1.25pt);
					\filldraw[black] (3,-2.5) circle (1.25pt);
					\node at (-3,2.5) [left,font=\scriptsize] {$-\bar{b}_2$};
					\node at (-3,-2.5) [left,font=\scriptsize] {$-b_2$};
					\filldraw[black] (-3,2.5) circle (1.25pt);
					\filldraw[black] (-3,-2.5) circle (1.25pt);
					
					\node at (3,1) [right,font=\scriptsize] {$b_1$};
					\node at (3,-1) [right,font=\scriptsize] {$\bar{b}_1$};
					\filldraw[black] (3,1) circle (1.25pt);
					\filldraw[black] (3,-1) circle (1.25pt);
					\node at (-3,1) [left,font=\scriptsize] {$-\bar{b}_1$};
					\node at (-3,-1) [left,font=\scriptsize] {$-b_1$};
					\filldraw[black] (-3,1) circle (1.25pt);
					\filldraw[black] (-3,-1) circle (1.25pt);
				\end{tikzpicture}
			}
			\caption{$\xi_0<\xi<\xi_{\mathrm{vac}}$.}
			\label{fig:sign-genus-3-gen}
		\end{subfigure}
		
		\caption{Signature charts of $\operatorname{Im} g$ in the generic case.
			In panel (a), the points $\zeta_1^2(\xi)$ is the zero of the genus-1 polynomial $P_3(\lambda,\xi)$.
			In panel (b), $B_j(\xi)=b_j(\xi)^2$, $j=1,2$, are the moving quotient branch points.
			Panel (a) shows the initial genus-1 sector, while panel (b) shows the genus-3 sector.}
		\label{fig:signature-two-sectors}
	\end{figure}

	\begin{figure}[ht]
		\centering
		\begin{subfigure}[t]{0.48\textwidth}
			\centering
			\resizebox{\linewidth}{!}{%
				\begin{tikzpicture}[scale=0.75]
					\draw[thick] (-3,0) .. controls (-3.25,2) and (-2.75,3) .. (-3,4);
					\draw[thick] (-3,0) .. controls (-2.75,-2) and (-3.25,-3) .. (-3,-4);
					
					\draw[thick] (3,0) .. controls (2.75,2) and (3.25,3) .. (3,4);
					\draw[thick] (3,0) .. controls (3.25,-2) and (2.75,-3) .. (3,-4);

					\draw[->, line width=1pt] (-5,0) -- (5,0);
					\draw[->, line width=1pt] (0,-4) -- (0,4);
					
					\draw[dashed, line width=1pt] (1,-3.5) -- (1,3.5);
					\draw[dashed, line width=1pt] (-1,-3.5) -- (-1,3.5);
					
					\draw[postaction={decorate},thick] (1,1) .. controls (1.5,0.4) and (2.3,0.1) .. (2.4,0);
					\draw[postaction={decorate},thick] (2.4,0) .. controls (2.7,1.5) and (1.5,2.8) .. (1,3);

					\draw[postaction={decorate},thick] (1,-1) .. controls (1.5,-0.4) and (2.3,-0.1) .. (2.4,0);
					\draw[postaction={decorate},thick] (2.4,0) .. controls (2.7,-1.5) and (1.5,-2.8) .. (1,-3);

					\draw[postaction={decorate},thick] (-1,1) .. controls (-1.5,0.4) and (-2.3,0.1) .. (-2.4,0);
					\draw[postaction={decorate},thick] (-2.4,0) .. controls (-2.7,1.5) and (-1.5,2.8) .. (-1,3);
					
					\draw[postaction={decorate},thick] (-1,-1) .. controls (-1.5,-0.4) and (-2.3,-0.1) .. (-2.4,0);
					\draw[postaction={decorate},thick] (-2.4,0) .. controls (-2.7,-1.5) and (-1.5,-2.8) .. (-1,-3);
					
					\node at (0,0) [below left, font=\small] {$\mathbf{0}$};
					\filldraw[black] (0,0) circle (1.25pt);
					\node at (5,0) [below, font=\small] {$\operatorname{Re}(z)$};
					
					\node at (0,4) [right, font=\small] {$\operatorname{Im}(z)$};
					\node at (3,0) [below right, font=\scriptsize] {$\zeta_1$};
					\node at (-3,0) [below left, font=\scriptsize] {$-\zeta_1$};
					\filldraw[black] (3,0) circle (1.25pt);
					\filldraw[black] (-3,0) circle (1.25pt);
					\node at (2.4,0) [below right, font=\scriptsize] {$\zeta_2$};
					\node at (-2.4,0) [below left, font=\scriptsize] {$-\zeta_2$};
					\filldraw[black] (2.4,0) circle (1.25pt);
					\filldraw[black] (-2.4,0) circle (1.25pt);

					\node at (1,1) [left,font=\scriptsize] {$E_1$};
					\node at (1,3) [left,font=\scriptsize] {$E_2$};
					\filldraw[black] (1,1) circle (1.25pt);
					\filldraw[black] (1,3) circle (1.25pt);
					
					\node at (1,-1) [left,font=\scriptsize] {$\bar{E}_1$};
					\node at (1,-3) [left,font=\scriptsize] {$\bar{E}_2$};
					\filldraw[black] (1,-1) circle (1.25pt);
					\filldraw[black] (1,-3) circle (1.25pt);
					
					\node at (-1,1) [right,font=\scriptsize] {$-\bar{E}_1$};
					\node at (-1,3) [right,font=\scriptsize] {$-\bar{E}_2$};
					\filldraw[black] (-1,1) circle (1.25pt);
					\filldraw[black] (-1,3) circle (1.25pt);
					
					\node at (-1,-1) [right,font=\scriptsize] {$-E_1$};
					\node at (-1,-3) [below right,font=\scriptsize] {$-E_2$};
					\filldraw[black] (-1,-1) circle (1.25pt);
					\filldraw[black] (-1,-3) circle (1.25pt);
				\end{tikzpicture}
			}
			\caption{$-\infty<\xi<\xi_-$.}
			\label{fig:sign-genus-1}
		\end{subfigure}
		\hfill
		\begin{subfigure}[t]{0.48\textwidth}
			\centering
			\resizebox{\linewidth}{!}{%
				\begin{tikzpicture}[scale=0.75]
					\draw[thick] (-4,0) .. controls (-4.25,2) and (-3.75,3) .. (-4,4);
					\draw[thick] (-4,0) .. controls (-3.75,-2) and (-4.25,-3) .. (-4,-4);
					
					\draw[thick] (4,0) .. controls (3.75,2) and (4.25,3) .. (4,4);
					\draw[thick] (4,0) .. controls (4.25,-2) and (3.75,-3) .. (4,-4);
					
					\draw[->, line width=1pt] (-5,0) -- (5,0);
					\draw[->, line width=1pt] (0,-4) -- (0,4);
					
					\draw[dashed, line width=1pt] (1,-3.5) -- (1,3.5);
					\draw[dashed, line width=1pt] (-1,-3.5) -- (-1,3.5);
					
					\draw[postaction={decorate},postaction={decorate}, thick] (1,1) .. controls (1.5,0.8) and (1.5,-0.8) .. (1,-1);
					\draw[postaction={decorate}, postaction={decorate}, thick] (1,3) .. controls (3.5,1.5) and (3.5,-2) .. (1,-3);

					\draw[postaction={decorate},postaction={decorate}, thick] (-1,1) .. controls (-1.5,0.8) and (-1.5,-0.8) .. (-1,-1);
					\draw[postaction={decorate},postaction={decorate}, decoration={markings, mark=at position 0.75 with {\arrow{<}}},thick] (-1,3) .. controls (-3.5,1.5) and (-3.5,-2) .. (-1,-3);

					\draw[postaction={decorate}, line width=1pt] (1.4,0) -- (3,0);
					\draw[postaction={decorate}, line width=1pt] (-1.4,0) -- (-3,0);
					
					\node at (0,0) [below left, font=\small] {$\mathbf{0}$};
					\filldraw[black] (0,0) circle (1.25pt);
					\node at (5,0) [below, font=\small] {$\operatorname{Re}(z)$};
					
					\node at (0,4) [right, font=\small] {$\operatorname{Im}(z)$};
					\node at (4,0) [above right, font=\scriptsize] {$\zeta_1$};
					\node at (-4,0) [below left, font=\scriptsize] {$-\zeta_1$};
					\filldraw[black] (4,0) circle (1.25pt);
					\filldraw[black] (-4,0) circle (1.25pt);
					
					\node at (2.9,0) [below right, font=\scriptsize] {$\zeta_2$};
					\node at (-2.9,0) [below left, font=\scriptsize] {$-\zeta_2$};
					\filldraw[black] (2.9,0) circle (1.25pt);
					\filldraw[black] (-2.9,0) circle (1.25pt);
					
					\node at (1.4,0) [below right, font=\scriptsize] {$\zeta_3$};
					\node at (-1.4,0) [below left, font=\scriptsize] {$-\zeta_3$};
					\filldraw[black] (1.4,0) circle (1.25pt);
					\filldraw[black] (-1.4,0) circle (1.25pt);

					\node at (1,1) [left,font=\scriptsize] {$E_1$};
					\node at (1,3) [left,font=\scriptsize] {$E_2$};
					\filldraw[black] (1,1) circle (1.25pt);
					\filldraw[black] (1,3) circle (1.25pt);
					
					\node at (1,-1) [left,font=\scriptsize] {$\bar{E}_1$};
					\node at (1,-3) [left,font=\scriptsize] {$\bar{E}_2$};
					\filldraw[black] (1,-1) circle (1.25pt);
					\filldraw[black] (1,-3) circle (1.25pt);
					
					\node at (-1,1) [right,font=\scriptsize] {$-\bar{E}_1$};
					\node at (-1,3) [right,font=\scriptsize] {$-\bar{E}_2$};
					\filldraw[black] (-1,1) circle (1.25pt);
					\filldraw[black] (-1,3) circle (1.25pt);
					
					\node at (-1,-1) [right,font=\scriptsize] {$-E_1$};
					\node at (-1,-3) [below right,font=\scriptsize] {$-E_2$};
					\filldraw[black] (-1,-1) circle (1.25pt);
					\filldraw[black] (-1,-3) circle (1.25pt);
				\end{tikzpicture}
			}
			\caption{$\xi_-<\xi<\xi_+$.}
			\label{fig:sign-genus-1s}
		\end{subfigure}
		
		\vspace{2mm}
		
		\begin{subfigure}[t]{0.48\textwidth}
			\centering
			\resizebox{\linewidth}{!}{%
				\begin{tikzpicture}[scale=0.75]
					\draw[->, line width=1pt] (-5,0) -- (5,0);
					\draw[->, line width=1pt] (0,-4) -- (0,4);
					
					\draw[dashed, line width=1pt] (1,-3.5) -- (1,3.5);
					\draw[dashed, line width=1pt] (-1,-3.5) -- (-1,3.5);
					
					\draw[postaction={decorate},postaction={decorate},line width=1pt] (1.75,0) -- (2.63,0);
					\draw[postaction={decorate},postaction={decorate},line width=1pt] (-1.75,0) -- (-2.63,0);

					
					\draw[postaction={decorate},thick] (1,1) .. controls (2,0.8) and (2,-0.8) .. (1,-1);
					
					\draw[postaction={decorate},postaction={decorate}, thick] (-1,1) .. controls (-2,0.8) and (-2,-0.8) .. (-1,-1);
					
					\draw[thick] (5,4) .. controls (4,3.6) and (3.5,3) .. (3,2.5);
					\draw[thick] (5,-4) .. controls (4,-3.6) and (3.5,-3) .. (3,-2.5);
					\draw[postaction={decorate},postaction={decorate},thick] (1,3) .. controls (2,2.9) and (2.5,2.7) .. (3,2.5);
					\draw[postaction={decorate},postaction={decorate},thick] (1,-3) .. controls (2,-2.9) and (2.5,-2.7) .. (3,-2.5);
					\draw[postaction={decorate},postaction={decorate},postaction={decorate},postaction={decorate},thick] (3,2.5) .. controls (2.5,1.5) and (2.5,-1.5) .. (3,-2.5);
					
					\draw[thick] (-5,4) .. controls (-4,3.6) and (-3.5,3) .. (-3,2.5);
					\draw[thick] (-5,-4) .. controls (-4,-3.6) and (-3.5,-3) .. (-3,-2.5);
					\draw[postaction={decorate},postaction={decorate},thick] (-1,3) .. controls (-2,2.9) and (-2.5,2.7) .. (-3,2.5);
					\draw[postaction={decorate},postaction={decorate},thick] (-1,-3) .. controls (-2,-2.9) and (-2.5,-2.7) .. (-3,-2.5);
					\draw[postaction={decorate},postaction={decorate},postaction={decorate},postaction={decorate},thick] (-3,2.5) .. controls (-2.5,1.5) and (-2.5,-1.5) .. (-3,-2.5);

					\node at (0,0) [below left, font=\small] {$\mathbf{0}$};
					\filldraw[black] (0,0) circle (1.25pt);
					\node at (5,0) [below, font=\small] {$\operatorname{Re}(z)$};
					\node at (0,4) [right, font=\small] {$\operatorname{Im}(z)$};
					
					\node at (2.63,0) [below right, font=\small] {$\zeta_1$};
					\node at (-2.63,0) [below left, font=\small] {$-\zeta_1$};
					\filldraw[black] (2.63,0) circle (1.25pt);
					\filldraw[black] (-2.63,0) circle (1.25pt);
					
					\node at (1.75,0) [below right, font=\small] {$\zeta_2$};
					\node at (-1.75,0) [below left, font=\small] {$-\zeta_2$};
					\filldraw[black] (1.75,0) circle (1.25pt);
					\filldraw[black] (-1.75,0) circle (1.25pt);

					\node at (1,1) [left,font=\scriptsize] {$E_1$};
					\node at (1,3) [left,font=\scriptsize] {$E_2$};
					\filldraw[black] (1,1) circle (1.25pt);
					\filldraw[black] (1,3) circle (1.25pt);
					
					\node at (1,-1) [left,font=\scriptsize] {$\bar{E}_1$};
					\node at (1,-3) [left,font=\scriptsize] {$\bar{E}_2$};
					\filldraw[black] (1,-1) circle (1.25pt);
					\filldraw[black] (1,-3) circle (1.25pt);
					
					\node at (-1,1) [right,font=\scriptsize] {$-\bar{E}_1$};
					\node at (-1,3) [right,font=\scriptsize] {$-\bar{E}_2$};
					\filldraw[black] (-1,1) circle (1.25pt);
					\filldraw[black] (-1,3) circle (1.25pt);
					
					\node at (-1,-1) [right,font=\scriptsize] {$-E_1$};
					\node at (-1,-3) [below right,font=\scriptsize] {$-E_2$};
					\filldraw[black] (-1,-1) circle (1.25pt);
					\filldraw[black] (-1,-3) circle (1.25pt);
					
					\node at (3,2.5) [right,font=\scriptsize] {$c_1$};
					\node at (3,-2.5) [below right,font=\scriptsize] {$\bar{c}_1$};
					\filldraw[black] (3,2.5) circle (1.25pt);
					\filldraw[black] (3,-2.5) circle (1.25pt);
					\node at (-3,2.5) [left,font=\scriptsize] {$-\bar{c}_1$};
					\node at (-3,-2.5) [below left,font=\scriptsize] {$-c_1$};
					\filldraw[black] (-3,2.5) circle (1.25pt);
					\filldraw[black] (-3,-2.5) circle (1.25pt);
				\end{tikzpicture}
			}
			\caption{$\xi_+<\xi<\xi_0$.}
			\label{fig:sign-genus-2}
		\end{subfigure}
		\hfill
		\begin{subfigure}[t]{0.48\textwidth}
			\centering
			\resizebox{\linewidth}{!}{%
				\begin{tikzpicture}[scale=0.75]
					\draw[->, line width=1pt] (-5,0) -- (5,0);
					\draw[->, line width=1pt] (0,-4) -- (0,4);
					
					\draw[dashed, line width=1pt] (1,-3.5) -- (1,3.5);
					\draw[dashed, line width=1pt] (-1,-3.5) -- (-1,3.5);
					
					\draw[postaction={decorate},thick] (1,1) .. controls (2,0.8) and (2.5,0.8) .. (3,1);
					\draw[postaction={decorate},thick] (1,-1) .. controls (2,-0.8) and (2.5,-0.8) .. (3,-1);
					\draw[postaction={decorate},thick] (-1,1) .. controls (-2,0.8) and (-2.5,0.8) .. (-3,1);
					\draw[postaction={decorate}, thick] (-1,-1) .. controls (-2,-0.8) and (-2.5,-0.8) .. (-3,-1);
					\draw[thick] (5,4) .. controls (4,3.6) and (3.5,3) .. (3,2.5);
					\draw[thick] (5,-4) .. controls (4,-3.6) and (3.5,-3) .. (3,-2.5);
					\draw[thick] (-5,4) .. controls (-4,3.6) and (-3.5,3) .. (-3,2.5);
					\draw[thick] (-5,-4) .. controls (-4,-3.6) and (-3.5,-3) .. (-3,-2.5);
					\draw[postaction={decorate}, postaction={decorate},thick] (1,3) .. controls (2,2.9) and (2.5,2.7) .. (3,2.5);
					\draw[postaction={decorate},postaction={decorate},thick] (1,-3) .. controls (2,-2.9) and (2.5,-2.7) .. (3,-2.5);
					\draw[postaction={decorate},postaction={decorate},thick] (-1,3) .. controls (-2,2.9) and (-2.5,2.7) .. (-3,2.5);
					\draw[postaction={decorate},postaction={decorate},thick] (-1,-3) .. controls (-2,-2.9) and (-2.5,-2.7) .. (-3,-2.5);
					\draw[postaction={decorate},postaction={decorate},thick] (3,2.5) .. controls (3.5,2) and (3.5,1.5) .. (3,1);
					\draw[postaction={decorate},postaction={decorate},thick] (-3,2.5) .. controls (-3.5,2) and (-3.5,1.5) .. (-3,1);
					\draw[postaction={decorate},postaction={decorate},thick] (3,-2.5) .. controls (3.5,-2) and (3.5,-1.5) .. (3,-1);
					\draw[postaction={decorate},postaction={decorate},thick] (-3,-2.5) .. controls (-3.5,-2) and (-3.5,-1.5) .. (-3,-1);
					\draw[postaction={decorate},postaction={decorate},postaction={decorate},postaction={decorate},thick] (3,1) .. controls (3.5,0.7) and (3.5,-0.7) .. (3,-1);
					\draw[postaction={decorate},postaction={decorate},postaction={decorate}, postaction={decorate},thick] (-3,1) .. controls (-3.5,0.7) and (-3.5,-0.7) .. (-3,-1);

					\node at (0,0) [below left, font=\small] {$\mathbf{0}$};
					\filldraw[black] (0,0) circle (1.25pt);
					\node at (5,0) [below, font=\small] {$\operatorname{Re}(z)$};
					\node at (0,4) [right, font=\small] {$\operatorname{Im}(z)$};
					
					\node at (3.37,0) [below right, font=\small] {$\zeta_1$};
					\node at (-3.37,0) [below left, font=\small] {$-\zeta_1$};
					\filldraw[black] (3.37,0) circle (1.25pt);
					\filldraw[black] (-3.37,0) circle (1.25pt);
					
					\node at (1,1) [left,font=\scriptsize] {$E_1$};
					\node at (1,3) [left,font=\scriptsize] {$E_2$};
					\filldraw[black] (1,1) circle (1.25pt);
					\filldraw[black] (1,3) circle (1.25pt);
					
					\node at (1,-1) [left,font=\scriptsize] {$\bar{E}_1$};
					\node at (1,-3) [left,font=\scriptsize] {$\bar{E}_2$};
					\filldraw[black] (1,-1) circle (1.25pt);
					\filldraw[black] (1,-3) circle (1.25pt);
					
					\node at (-1,1) [right,font=\scriptsize] {$-\bar{E}_1$};
					\node at (-1,3) [right,font=\scriptsize] {$-\bar{E}_2$};
					\filldraw[black] (-1,1) circle (1.25pt);
					\filldraw[black] (-1,3) circle (1.25pt);
					
					\node at (-1,-1) [right,font=\scriptsize] {$-E_1$};
					\node at (-1,-3) [below right,font=\scriptsize] {$-E_2$};
					\filldraw[black] (-1,-1) circle (1.25pt);
					\filldraw[black] (-1,-3) circle (1.25pt);
					
					\node at (3,2.5) [right,font=\scriptsize] {$b_2$};
					\node at (3,-2.5) [right,font=\scriptsize] {$\bar{b}_2$};
					\filldraw[black] (3,2.5) circle (1.25pt);
					\filldraw[black] (3,-2.5) circle (1.25pt);
					\node at (-3,2.5) [left,font=\scriptsize] {$-\bar{b}_2$};
					\node at (-3,-2.5) [left,font=\scriptsize] {$-b_2$};
					\filldraw[black] (-3,2.5) circle (1.25pt);
					\filldraw[black] (-3,-2.5) circle (1.25pt);
					
					\node at (3,1) [right,font=\scriptsize] {$b_1$};
					\node at (3,-1) [right,font=\scriptsize] {$\bar{b}_1$};
					\filldraw[black] (3,1) circle (1.25pt);
					\filldraw[black] (3,-1) circle (1.25pt);
					\node at (-3,1) [left,font=\scriptsize] {$-\bar{b}_1$};
					\node at (-3,-1) [left,font=\scriptsize] {$-b_1$};
					\filldraw[black] (-3,1) circle (1.25pt);
					\filldraw[black] (-3,-1) circle (1.25pt);
				\end{tikzpicture}
			}
			\caption{$\xi_0<\xi< \xi_{\mathrm{vac}}$.}
			\label{fig:sign-genus-3}
		\end{subfigure}
		
		\caption{Signature charts of $\operatorname{Im} g$ in the degenerate case.
			The points $\zeta_j^2(\xi)$, $j=1,2,3$, are the roots of $P_3(\lambda,\xi)$ in the quotient $\lambda$-plane.
			Panels (a)--(d) show, respectively, the initial genus-1 sector, the genus-$1_s$ sector, the genus-2 sector, and the genus-3 sector.}
		\label{fig:signature-four-sectors}
	\end{figure}

	\subsection{Mechanism of the phase transitions}
	\label{phase-transitions}
	
	We now describe the mechanism selecting the different long-time phase diagrams. The starting point is the genus-1 quotient differential introduced above. Since the critical points of the Abelian phase are determined by
	\[
	d\widehat{\Phi}_1(\xi,\lambda)=0
	\quad\Longleftrightarrow\quad
	P_3(\lambda,\xi)=0,
	\]
	the topology of the signature chart of
	\[
	h_1(\xi,\lambda)
	:=\operatorname{Im}\widehat{\Phi}_1(\xi,\lambda)
	\]
	is governed by the roots of the cubic polynomial \(P_3(\lambda,\xi)\).
	
	The coefficients of polynomial \(P_3(\lambda,\xi)\) are obtained from the matching condition at
	infinity and the genus-1 Boutroux condition. In particular,
	\[
	P_3(\lambda,\xi)
	=
	\lambda^3+
	\left(\frac{\xi}{4}-S_1\right)\lambda^2
	+
	\left(S_2-\frac{S_1\xi}{4}\right)\lambda
	+w_0+w_1\xi ,
	\]
	where \(w_0,w_1\) are determined by the period integrals
	\[
	I_k=\oint_{\widehat a_1}
	\frac{\lambda^k}{\widehat{\mathcal R}_1(\lambda)}d\lambda,
	\qquad k=0,1,2,3 .
	\]
	Hence the polynomial \(P_3(\lambda,\xi)\) has real coefficients, and its roots are either three real
	roots or one real root together with a non-real conjugate pair.
	
	The transition of the critical graph is detected by two scalar
	quantities. The first one is the discriminant
	\[
	D_3(\xi)=\operatorname{disc}_{\lambda}P_3(\lambda,\xi).
	\]
	Its sign determines the configuration of the critical points:
	\[
	D_3(\xi)>0
	\quad\Longleftrightarrow\quad
	\text{three distinct real critical points},
	\]
	while
	\[
	D_3(\xi)<0
	\quad\Longleftrightarrow\quad
	\text{one real critical point and one conjugate pair}.
	\]
	Therefore \(D_3(\xi)=0\) describes collisions of critical points.
	
	The second mechanism occurs when a non-real critical point reaches the
	zero level set of the phase. Let \(d(\xi)\) be the upper half-plane root
	of \(P_3(\lambda,\xi)\) in the region \(D_3(\xi)<0\), and define
	\[
	H_1(\xi)
	=
	\operatorname{Im}\widehat{\Phi}_1(\xi,d(\xi)).
	\]
	A transversal crossing
	\[
	H_1(\xi_0)=0,\qquad H_1'(\xi_0)\neq0
	\]
	means that the critical value crosses the zero level and the genus-1 signature chart changes its admissibility.
	
	Before analyzing these two transitions, we impose the following
	nondegeneracy assumptions. On every interval where a fixed genus
	description is used, we assume:
	
	\begin{enumerate}
		\item no critical point collides with a fixed or existing moving branch
		point;
		
		\item all critical points are simple except at the explicitly described
		transition values;
		
		\item no additional finite saddle connection occurs for the quadratic
		differential $(d\widehat{\Phi}_n)^2$;
		
		\item all transitions are transversal.
	\end{enumerate}
	
	Under these assumptions, the signature chart is stable away from the
	transition points. More precisely, the zero level set
	\[
	\Gamma_1(\xi)
	=
	\{\lambda:\operatorname{Im}\widehat{\Phi}_1(\xi,\lambda)=0\}
	\]
	is isotopic as \(\xi\) varies inside such an interval. Therefore, once the
	signature inequalities required for the nonlinear steepest descent method
	are verified at one reference value, they remain valid throughout the
	whole interval.
	
	There are two possible local mechanisms for changing the genus.
	
	\paragraph{Opening from a real critical collision.} Suppose that
	\[
	D_3(\xi_*)=0,\qquad D_3'(\xi_*)\neq0 .
	\]
	Then two real critical points collide at a double critical point.
	After the collision, a new conjugate pair of moving branch points may
	open. 
	Consequently, a real critical collision increases the quotient genus by
	one:
	\[
	n\longrightarrow n+1 .
	\]
	
	\paragraph{Opening from a non-real critical crossing.}
	If a simple non-real critical point reaches the zero level,
	then both it and its conjugate contribute to the opening process. Two
	conjugate pairs of moving branch points are generated, and therefore
	\[
	n\longrightarrow n+2 .
	\]
	
	These two mechanisms completely determine the possible phase diagrams
	under the above nondegeneracy assumptions.
	
	\paragraph{Generic case:} Assume that
	\[
	D_3(\xi)<0,
	\qquad \xi<\xi_{\mathrm{vac}},
	\]
	and that \(H_1(\xi)\) is strictly monotone with opposite signs at
	\(-\infty\) and \(\xi_{\mathrm{vac}}^{-}\). Then there exists a unique
	transition value \(\xi_0\) satisfying
	\[
	H_1(\xi_0)=0,
	\qquad D_3(\xi_0)<0 .
	\]
	
	For
	\(\xi<\xi_0\), the genus-1 signature chart remains admissible.
	At \(\xi=\xi_0\), a non-real critical point crosses the zero level and
	opens two conjugate pairs of branch points. Hence the genus of the associated quotient Riemann surface
	increases from one to three. No further transition occurs before
	\(\xi_{\mathrm{vac}}\). Therefore,
	\[
	n=1\longrightarrow n=3\longrightarrow\mathrm{vacuum},
	\]
	as depicted in Figure~\ref{fig:sign-genus-1-gen}.
	\paragraph{Degenerate case:} Assume that \(D_3(\xi)\) has a single maximum and satisfies
	\[
	\lim_{\xi\to-\infty}D_3(\xi)<0,
	\qquad
	D_3(\xi_c)>0,
	\qquad
	\lim_{\xi\uparrow\xi_{\mathrm{vac}}}D_3(\xi)<0 .
	\]
	Then there exist two simple zeros $\xi_-$ and $\xi_+ $ satisfying
	\[
	\xi_-<\xi_+ .
	\]
	\par
	At \(\xi=\xi_-\), two critical points collide, but the resulting
	signature chart remains admissible. The genus of quotient Riemann surface therefore is still
	one, although the critical graph changes. We denote this sector by
	\(1_s\).
	
	At \(\xi=\xi_+\), the genus-\(1_s\) chart loses admissibility. A real
	critical collision opens one conjugate pair of moving branch points and
	produces a genus-2 continuation:
	\[
	n=1_s\longrightarrow n=2 .
	\]
	
	For the genus-2 phase, let \(C_1(\xi)\) be the moving branch point and
	define the residual polynomial
	\[
	Q_2(\lambda,\xi)
	=
	\frac{P_4(\lambda,\xi)}
	{(\lambda-C_1(\xi))
		(\lambda-\overline{C_1(\xi)})}.
	\]
	The next transition is detected by
	\[
	\mathcal D_2(\xi)
	=
	\operatorname{disc}_{\lambda}Q_2(\lambda,\xi).
	\]
	Assuming that \(\mathcal D_2(\xi)\) decreases monotonically and changes sign,
	there exists a unique $\xi_0>\xi_+$
	such that $\mathcal D_2(\xi_0)=0$.
	At this point another real critical collision occurs and opens a second
	conjugate pair of branch points, giving the genus-3 phase.
	
	Hence the degenerate phase diagram as depicted in Figure~\ref{fig:signature-four-sectors} is
	\[
	n=1
	\longrightarrow
	n=1_s
	\longrightarrow
	n=2
	\longrightarrow
	n=3
	\longrightarrow
	\mathrm{vacuum}.
	\]

	\subsection{The \(g\)-functions in the finite-genus sectors}
	
	We now describe the \(g\)-function construction in the finite-genus
	sectors. For each sector described by finite-genus solution, introduce a quotient curve
	\begin{equation}\label{quotient-curve-n}
		\widehat{\mathcal X}_n:
		\ 
		\widehat{\mathcal R}_n(\lambda)^2
		=
		\prod_{j=1}^{2n+2}(\lambda-\alpha_j),
		\qquad
		n=1,2,3,
	\end{equation}
	where the branch points occur in Schwarz conjugate pairs and the branch
	of \(\widehat{\mathcal R}_n\) is normalized by
	\[
	\widehat{\mathcal R}_n(\lambda)
	=
	\lambda^{n+1}
	\left(1+\mathcal O(\lambda^{-1})\right),
	\qquad
	\lambda\to\infty.
	\]
	All branch points are assumed
	to be distinct. The corresponding Abelian differential is in the following form
	\begin{equation}\label{Abelian-differential-n}
		d\widehat\Phi_n(\xi,\lambda)
		=
		4\frac{P_{n+2}(\lambda,\xi)}
		{\widehat{\mathcal R}_n(\lambda)}\,d\lambda,
	\end{equation}
	where \(P_{n+2}(\lambda,\xi)\) is a monic polynomial of degree \(n+2\).
	The Schwarz symmetry is imposed in the form
	\[
	\overline{P_{n+2}(\overline\lambda,\xi)}
	=
	P_{n+2}(\lambda,\xi),
	\]
	so that the coefficients of \(P_{n+2}\) are real.
	
	The coefficients of \(P_{n+2}\), together with the moving branch
	points when \(n\geq2\), are fixed by a closed system of modulation
	conditions. First, the differential must reproduce the quotient phase
	at infinity:
	\begin{equation}\label{infity-condition-n}
		d\widehat\Phi_n(\xi,\lambda)
		=
		\left(
		4\lambda+\xi+\mathcal O(\lambda^{-2})
		\right)d\lambda,
		\qquad
		\lambda\to\infty.
	\end{equation}
	Second, the Boutroux conditions are imposed:
	\begin{equation}\label{Boutroux-conditions}
		\operatorname{Im}
		\oint_{\widehat a_j}
		d\widehat\Phi_n(\xi,\lambda)
		=
		0,
		\qquad
		j=1,\ldots,n.
	\end{equation}
	
	For \(n\geq2\), the moving branch points must also be soft endpoints of
	the Abelian phase. To state this condition uniformly, set
	\[
	\beta_1=C_1
	\quad\text{for }n=2,
	\qquad
	(\beta_1,\beta_2)=(B_1,B_2)
	\quad\text{for }n=3.
	\]
	The additional modulation conditions are
	\begin{equation}\label{soft-edge-conditions}
		P_{n+2}(\beta_k(\xi),\xi)=0,
		\qquad
		k=1,\ldots,n-1.
	\end{equation}
	The corresponding equations at
	\(\overline{\beta_k(\xi)}\) follow from Schwarz symmetry.
	The necessity of \eqref{soft-edge-conditions} follows from the local
	behavior of the phase. Indeed, near a moving branch point \(\beta_k\),
	write
	\[
	\widehat{\mathcal R}_n(\lambda)
	=
	(\lambda-\beta_k)^{1/2}
	\widehat{\mathcal R}_{n,k}(\lambda),
	\qquad
	\widehat{\mathcal R}_{n,k}(\beta_k)\neq0.
	\]
	If \(P_{n+2}(\beta_k,\xi)\neq0\), then
	\[
	\widehat\Phi_n(\xi,\lambda)
	-
	\widehat\Phi_n(\xi,\beta_k)
	=
	\frac{8P_{n+2}(\beta_k,\xi)}
	{\widehat{\mathcal R}_{n,k}(\beta_k)}
	(\lambda-\beta_k)^{1/2}
	\left(1+\mathcal O(\lambda-\beta_k)\right).
	\]
	On the other
	hand, if
	\[
	P_{n+2}(\beta_k,\xi)=0,
	\qquad
	\partial_\lambda P_{n+2}(\beta_k,\xi)\neq0,
	\]
	then
	\begin{equation}\label{soft-edge-expansion}
		\widehat\Phi_n(\xi,\lambda)
		-
		\widehat\Phi_n(\xi,\beta_k)
		=
		\frac{8\,\partial_\lambda
			P_{n+2}(\beta_k,\xi)}
		{3\widehat{\mathcal R}_{n,k}(\beta_k)}
		(\lambda-\beta_k)^{3/2}
		\left(1+\mathcal O(\lambda-\beta_k)\right).
	\end{equation}
	The nonvanishing derivative in
	\eqref{soft-edge-expansion} holds in the interior of an open sector;
	its failure signals a further degeneration of the modulation system.
	
	Let \(\Sigma_{\mathrm{ban}}^n\) and
	\(\Sigma_{\mathrm{gap}}^n\) denote the bands and gaps of the resulting
	configuration. Since
	\(\widehat{\mathcal R}_{n,+}=-\widehat{\mathcal R}_{n,-}\) on a band,
	one has
	\[
	d\widehat\Phi_{n,+}
	+
	d\widehat\Phi_{n,-}=0.
	\]
	In terms of the original \(z\)-variable, the corresponding
	\(g\)-function is
	\begin{equation}\label{g-function-n}
		g_n(\xi,z)
		=
		\widehat\Phi_n(\xi,z^2)
		-
		\theta(\xi,z).
	\end{equation}
	By \eqref{infity-condition-n}, it follows that
	\[
	g_n(\xi,z)
	=
	g_{\infty,n}(\xi)
	+
	\mathcal O(z^{-2}),
	\qquad
	z\to\infty.
	\]
	On the remaining arcs, the sign of
	\(\operatorname{Im}\widehat\Phi_n\) is required to be such that the
	corresponding exponential factors decay as \(t\to+\infty\).
	
	It is important to distinguish the branch points of
	\(\widehat{\mathcal X}_n\) from the zeros of \(P_{n+2}\). The branch
	points are endpoints of the main bands and determine the genus of the
	quotient curve. The zeros of \(P_{n+2}\) are the critical points of
	the Abelian phase:
	\[
	d\widehat\Phi_n(\xi,\lambda)=0
	\quad\Longleftrightarrow\quad
	P_{n+2}(\lambda,\xi)=0.
	\]
	The moving branch points satisfy both conditions: they are branch
	points of the curve and, by \eqref{soft-edge-conditions}, zeros of the
	numerator. The fixed points
	\(A_1,A_2,\overline A_1,\overline A_2\), however, are not required to
	be zeros of \(P_{n+2}\). The remaining zeros of \(P_{n+2}\) determine
	the topology of the critical graph of
	\(\operatorname{Im}\widehat\Phi_n\).
	
	\paragraph{Genus-1 sector.}
	In the genus-1 sector, the quotient curve only has the four fixed
	branch points
	\[
	\mathcal A_1
	=
	\{A_1,A_2,\overline A_1,\overline A_2\}.
	\]
	Thus \(\widehat{\mathcal R}_1(\lambda)\) is defined by
	\eqref{quotient-curve-t1}, and
	\[
	d\widehat\Phi_1(\xi,\lambda)
	=
	4\frac{P_3(\lambda,\xi)}
	{\widehat{\mathcal R}_1(\lambda)}\,d\lambda.
	\]
	There are three real coefficients in \(P_3\). The infinity
	condition \eqref{infity-condition-n} and the Boutroux condition \eqref{Boutroux-conditions} determine them uniquely,
	provided that the corresponding period denominator is nonzero. No
	soft edge condition is imposed, since no moving branch point is
	present.
	The zeros of \(P_3\) determine the genus-1 critical graph. This sector
	remains valid as long as these zeros are simple, remain separated from
	the fixed branch points, and produce the required strict sign
	distribution. Its possible degenerations are the critical point
	collision
	\[
	D_3(\xi)
	=
	\operatorname{disc}_{\lambda}P_3(\lambda,\xi)
	=
	0
	\]
	and the zero-level crossing
	\[
	H_1(\xi)
	=
	\operatorname{Im}
	\widehat\Phi_1(\xi,d(\xi))
	=
	0.
	\]
	In the degenerate phase diagram, the first collision changes the
	critical graph without changing the branch point set. The quotient
	curve still has genus one, and the resulting sector is denoted by
	\(n=1_s\).
	
	\paragraph{Genus-2 sector.}
	The genus-2 sector appears when one conjugate pair of moving branch
	points must be introduced. Write
	\[
	\mathcal A_2
	=
	\mathcal A_1
	\cup
	\{C_1(\xi),\overline{C_1(\xi)}\},
	\]
	Then the quotient curve is
	\begin{equation}\label{quotient-curve-t2}
		\widehat{\mathcal R}_2(\lambda)^2
		=
		\prod_{\alpha\in\mathcal A_2}
		(\lambda-\alpha),
	\end{equation}
	and the corresponding differential is
	\[
	d\widehat\Phi_2(\xi,\lambda)
	=
	4\frac{P_4(\lambda,\xi)}
	{\widehat{\mathcal R}_2(\lambda)}\,d\lambda.
	\]
	The complete genus-2 modulation system consists of the two infinity
	conditions, the two Boutroux conditions, and
	\begin{equation}\label{genus-two-soft-edge}
		P_4(C_1(\xi),\xi)=0.
	\end{equation}
	Its Schwarz conjugate gives
	\(P_4(\overline{C_1(\xi)},\xi)=0\). Thus the four real coefficients of
	\(P_4\) and the two real components of \(C_1\) are determined by six
	real equations. 
	
	At the transition from the genus-\(1_s\) sector, the new conjugate
	branch points are born from a real double critical point \(c_+\). The
	matching with the genus-1 differential requires
	\[
	C_1(\xi),\overline{C_1(\xi)}
	\longrightarrow c_+,
	\]
	and
	\[
	\widehat{\mathcal R}_2(\lambda)
	\longrightarrow
	(\lambda-c_+)\widehat{\mathcal R}_1(\lambda),
	\qquad
	P_4(\lambda,\xi)
	\longrightarrow
	(\lambda-c_+)P_3(\lambda,\xi).
	\]
	Since \(P_3(c_+,\xi_+)=0\), the limiting numerator has the required
	double zero at the opening point. Away from the transition, the
	condition
	\(\partial_\lambda P_4(C_1,\xi)\neq0\) gives the local
	\(3/2\)-behavior in \eqref{soft-edge-expansion}.
	
	\paragraph{Genus-3 sector.}
	In the genus-3 sector, two conjugate pairs of moving branch points are
	present:
	\[
	\mathcal A_3
	=
	\mathcal A_1
	\cup
	\{
	B_1(\xi),B_2(\xi),
	\overline{B_1(\xi)},\overline{B_2(\xi)}
	\}.
	\]
	Thus the quotient curve is 
	\[
	\widehat{\mathcal R}_3(\lambda)^2
	=
	\prod_{\alpha\in\mathcal A_3}
	(\lambda-\alpha),
	\quad
	d\widehat\Phi_3(\xi,\lambda)
	=
	4\frac{P_5(\lambda,\xi)}
	{\widehat{\mathcal R}_3(\lambda)}\,d\lambda.
	\]
	The complete modulation system consists of the two infinity
	conditions, the three Boutroux conditions, and
	\begin{equation}\label{genus-three-soft-edge}
		P_5(B_1(\xi),\xi)
		=
		P_5(B_2(\xi),\xi)
		=
		0.
	\end{equation}
	Together with the conjugate equations, this gives nine real equations
	for the five real coefficients of \(P_5\) and the four real components
	of \(B_1,B_2\).
	
	In the generic case, the genus-3 branch points are opened from the
	non-real genus-1 critical points. At the transition value \(\xi_0\), one has
	\[
	B_1(\xi),B_2(\xi)
	\longrightarrow d(\xi_0),
	\qquad
	\overline{B_1(\xi)},\overline{B_2(\xi)}
	\longrightarrow\overline{d(\xi_0)}.
	\]
	The degeneration is compatible with the genus-1 differential in the
	form
	\[
	\widehat{\mathcal R}_3(\lambda)
	\longrightarrow
	(\lambda-d(\xi_0))
	(\lambda-\overline{d(\xi_0)})
	\widehat{\mathcal R}_1(\lambda),
	\quad
	P_5(\lambda,\xi)
	\longrightarrow
	(\lambda-d(\xi_0))
	(\lambda-\overline{d(\xi_0)})
	P_3(\lambda,\xi_0).
	\]
	Since \(d(\xi_0)\) and \(\overline{d(\xi_0)}\) are already zeros of
	\(P_3\), the limiting numerator has double zeros at the two opening
	points, as required.
	
	In the degenerate case, one conjugate pair is inherited from the
	genus-2 sector, after a relabeling of the moving endpoints, while the
	second pair is opened at the next degeneration of the genus-2
	critical graph.
	
	The genus-3 sector terminates at the right
	vacuum boundary
	\[
	\xi=\xi_{\mathrm{vac}}.
	\]
	For \(\xi>\xi_{\mathrm{vac}}\), the original jump matrices are
	exponentially close to the identity, and no finite-gap \(g\)-function
	is required.

	\subsection{Scalar normalization and model problems solved by Riemann theta function}
	
	After the transformation based on \(g\)-function is introduced, the exponential growth on the
	main bands has been removed. However, the mother body density factors
	\(\varrho(z)\) and \(\varrho^*(z)\) still occur in the off-diagonal
	entries. We now absorb these factors by a scalar normalization and
	reduce the leading problem to a finite-gap model problem with constant jump.
	
	We formulate the construction uniformly in the
	sectors of effective genus \(n\) for \(n=1,2,3\). Although the matrix RHP is
	written in the original \(z\)-plane, all Abelian data are defined on
	the quotient curve \(\widehat{\mathcal X}_n\).
	
	Let \(\Sigma_{\operatorname{ban}}^{n-}\) denote the bands on which the
	transformed jump has a lower-left entry, and let
	\(\Sigma_{\operatorname{ban}}^{n+}\) denote their Schwarz-conjugate
	bands, on which the transformed jump has an upper-right entry. Let
	\(\Sigma_{\operatorname{gap}}^{n,j}\), \(j=1,\ldots,n\), denote the gap
	contours. We introduce a scalar function \(F_n=F_n(\xi,z)\), analytic
	and nonzero away from the bands and gaps, satisfying
	\begin{equation}
		\left\{
		\begin{aligned}
			F_{n,+}F_{n,-}
			&=\varrho^{-1},
			&&z\in\Sigma_{\operatorname{ban}}^{n-},\\
			F_{n,+}F_{n,-}
			&=\varrho^*,
			&&z\in\Sigma_{\operatorname{ban}}^{n+},\\
			F_{n,+}F_{n,-}^{-1}
			&=e^{\kappa_{n,j}},
			&&z\in\Sigma_{\operatorname{gap}}^{n,j},
			\quad j=1,\ldots,n.
		\end{aligned}
		\right.
		\label{eq:Fn-jump-conditions}
	\end{equation}
	Here and below the \(\xi\)-dependence of the quantities in
	\eqref{eq:Fn-jump-conditions} is suppressed when no confusion can
	arise.
	
	The constants \(\kappa_{n,j}\) are chosen so that \(F_n\) is
	single-valued on the quotient surface. Equivalently, after fixing
	compatible branches of \(\log\varrho\) and \(\log\varrho^*\), we impose
	the full period conditions
	\begin{equation}
		\oint_{\widehat a_j}d\log F_n=0,
		\qquad
		j=1,\ldots,n.
		\label{eq:Fn-period}
	\end{equation}
	The Schwarz symmetry reduces \eqref{eq:Fn-period} to an
	\(n\)-dimensional real linear system for
	\(\kappa_{n,1},\ldots,\kappa_{n,n}\). At infinity,
	\begin{equation}
		F_n(\xi,z)
		=
		F_{\infty,n}(\xi)+\mathcal O(z^{-1}),
		\qquad
		z\to\infty,
		\label{eq:Fn-infinity}
	\end{equation}
	where \(F_{\infty,n}(\xi)\neq0\). After conjugation by
	\(F_n^{\sigma_3}\), the density factors disappear and the remaining jumps are constant.
	
	Let
	\[
	\widehat{\boldsymbol\nu}
	=
	(\widehat\nu_1,\ldots,\widehat\nu_n)^{\mathsf T}
	\]
	be the normalized holomorphic differentials on
	\(\widehat{\mathcal X}_n\):
	\[
	\oint_{\widehat a_j}\widehat\nu_k=\delta_{jk},
	\qquad j,k=1,\ldots,n.
	\]
	The period matrix is
	\begin{equation}\label{period-matrix-t}
		\widehat{\mathcal P}_{jk}
		=
		\oint_{\widehat b_j}\widehat\nu_k,
		\qquad
		j,k=1,\ldots,n,
	\end{equation}
	and the corresponding Riemann theta function is
	\[
	\Theta(\mathbf z;\widehat{\mathcal P})
	=
	\sum_{\mathbf m\in\mathbb Z^n}
	\exp\left\{
	2\pi i\mathbf m^{\mathsf T}\mathbf z
	+
	\pi i\mathbf m^{\mathsf T}
	\widehat{\mathcal P}\mathbf m
	\right\}.
	\]
	
	The Abel map, based at \(\infty^+\), is
	\[
	\widehat{\mathcal A}_n(P)
	=
	\int_{\infty^+}^{P}\widehat{\boldsymbol\nu}.
	\]
	For the two points \(P^\pm(z)\) lying over \(\lambda=z^2\), set
	\begin{equation}\label{limit-of-abel-map}
		\widehat{\mathcal A}_n^\pm(z)
		=
		\widehat{\mathcal A}_n(P^\pm(z)),
		\qquad
		\widehat{\mathcal A}_{n,\infty}
		=
		\lim_{z\to\infty}
		\widehat{\mathcal A}_n^-(z).
	\end{equation}
	The constant jumps on the gaps are encoded by
	\begin{equation}\label{eq:Delta-n-vector}
		\boldsymbol\Delta_n(\xi,t)
		=
		\frac{
			t\boldsymbol\Omega_n(\xi)
			-i\boldsymbol\kappa_n(\xi)
		}{2\pi},
	\end{equation}
	where
	\(\boldsymbol\kappa_n
	=(\kappa_{n,1},\ldots,\kappa_{n,n})^{\mathsf T}\).
	For each \(\xi\), choose a nonspecial divisor
	\[
	\mathcal D_n=P_{1,n}+\cdots+P_{n,n}
	\]
	of degree \(n\), depending smoothly on \(\xi\), and define
	\[
	\mathbf d_n
	=
	-\widehat{\mathcal A}_n(\mathcal D_n)-\mathbf K_n
	\quad
	\mod
	\bigl(
	\mathbb Z^n+\widehat{\mathcal P}\mathbb Z^n
	\bigr),
	\]
	where \(\mathbf K_n\) is the vector of Riemann constants. The divisor is
	chosen so that the theta quotients below have no nonremovable poles.
	Define
	\begin{equation}\label{eq:Qnpm}
		Q_n^\pm(z)
		=
		\frac{
			\Theta\bigl(
			\widehat{\mathcal A}_n^\pm(z)
			+\mathbf d_n-\boldsymbol\Delta_n;
			\widehat{\mathcal P}
			\bigr)
		}{
			\Theta\bigl(
			\widehat{\mathcal A}_n^\pm(z)
			+\mathbf d_n;
			\widehat{\mathcal P}
			\bigr)
		}.
	\end{equation}
	Let \(\alpha_n(z)\) be the fourth-root algebraic factor producing the
	sheet exchange across the bands, normalized by
	\[
	\alpha_n(z)=1+\mathcal O(z^{-1}),
	\qquad z\to\infty.
	\]
	Introduce first the unnormalized  matrix in terms of Riemann theta function
	\begin{equation}\label{eq:theta-model-matrix}
		\mathcal N_n(z)
		=
		\frac{
			\Theta(\mathbf d_n;\widehat{\mathcal P})
		}{
			2\Theta(
			\mathbf d_n-\boldsymbol\Delta_n;
			\widehat{\mathcal P})
		}
		\begin{pmatrix}
			Q_n^+(\alpha_n+\alpha_n^{-1})
			&
			-iQ_n^-(\alpha_n-\alpha_n^{-1})
			\\[1mm]
			iQ_n^+(\alpha_n-\alpha_n^{-1})
			&
			Q_n^-(\alpha_n+\alpha_n^{-1})
		\end{pmatrix}.
	\end{equation}
	The theta quotients reproduce the diagonal jumps on the gap, whereas the
	fourth-root factor produces the constant off-diagonal jumps on the
	bands.
	
	Since
	\[
	\widehat{\mathcal A}_n^+(z)\to0,
	\qquad
	\widehat{\mathcal A}_n^-(z)
	\to\widehat{\mathcal A}_{n,\infty},
	\qquad z\to\infty,
	\]
	one obtains
	\[
	\mathcal N_n(\infty)
	=
	\begin{pmatrix}
		1&0\\
		0&T_n(\xi,t)
	\end{pmatrix},
	\]
	where
	\begin{equation}\label{eq:Tn}
		T_n(\xi,t)
		=
		\frac{
			\Theta(\mathbf d_n;\widehat{\mathcal P})
			\Theta\bigl(
			\widehat{\mathcal A}_{n,\infty}
			+\mathbf d_n-\boldsymbol\Delta_n;
			\widehat{\mathcal P}
			\bigr)
		}{
			\Theta\bigl(
			\mathbf d_n-\boldsymbol\Delta_n;
			\widehat{\mathcal P}
			\bigr)
			\Theta\bigl(
			\widehat{\mathcal A}_{n,\infty}
			+\mathbf d_n;
			\widehat{\mathcal P}
			\bigr)
		}.
	\end{equation}
	Therefore the normalized outer model is
	\begin{equation}\label{eq:normalized-outer-model}
		m_n^{\mathrm{out}}(z)
		=
		\mathcal N_n(\infty)^{-1}\mathcal N_n(z),
	\end{equation}
	where satisfies
	\[
	m_n^{\mathrm{out}}(z)
	=
	\mathbb I+\mathcal O(z^{-1}),
	\qquad z\to\infty.
	\]
	
	Assume, in addition, that the nonspecial divisor
	$\mathcal D_n$ can be chosen compatibly with the Schwarz symmetry
	and the analyticity requirements of the model problem so that
	\[
	\widehat{\mathcal A}_n(\mathcal D_n)+\mathbf K_n
	\equiv 0
	\quad
	\mod\bigl(
	\mathbb Z^n+\widehat{\mathcal P}\mathbb Z^n
	\bigr).
	\]
	Then $\mathbf d_n\equiv0$, and the evenness of the Riemann theta
	function reduces \eqref{eq:Tn} to 
	\begin{equation}\label{eq:Tn-dzero}
		T_n(\xi,t)
		=
		\frac{
			\Theta(0;\widehat{\mathcal P})
			\Theta\bigl(
			\widehat{\mathcal A}_{n,\infty}
			-\boldsymbol\Delta_n;
			\widehat{\mathcal P}
			\bigr)
		}{
			\Theta\bigl(
			\boldsymbol\Delta_n;
			\widehat{\mathcal P}
			\bigr)
			\Theta\bigl(
			\widehat{\mathcal A}_{n,\infty};
			\widehat{\mathcal P}
			\bigr)
		}.
	\end{equation}
	All displayed formulae are understood away from the theta divisor;
	the apparent singularities at exceptional divisor values are treated
	by the standard divisor continuation.
	
	Suppose that
	\begin{equation}\label{eq:alpha-general-expansion}
		\alpha_n(z)
		=
		1+\frac{i\widehat\alpha_n(\xi)}{z}
		+\mathcal O(z^{-2}),
		\qquad z\to\infty.
	\end{equation}
	Tracing back the scalar and \(g\)-function conjugations, the leading-order term of asymptotic solution is therefore
	\begin{equation}\label{eq:Un-general}
		\mathcal U_n(x,t)
		=
		2i\widehat\alpha_n(\xi)
		T_n(\xi,t)
		\bigl(F_{\infty,n}(\xi)\bigr)^2
		e^{2it g_{\infty,n}(\xi)}.
	\end{equation}
	
	In the genus-1 and genus-\(1_s\) sectors, take
	\begin{equation}\label{eq:alpha1}
		\alpha_1(z)
		=
		\left[
		\frac{
			(z-E_1)(z-\overline E_2)
			(z+\overline E_1)(z+E_2)
		}{
			(z+E_1)(z+\overline E_2)
			(z-\overline E_1)(z-E_2)
		}
		\right]^{1/4}.
	\end{equation}
	Then it follows that
	\begin{equation}\label{eq:U1}
		\mathcal U_1(x,t)
		=
		2i
		\bigl(
		\operatorname{Im}E_2-\operatorname{Im}E_1
		\bigr)
		T_1(\xi,t)
		\bigl(F_{\infty,1}(\xi)\bigr)^2
		e^{2it g_{\infty,1}(\xi)}.
	\end{equation}
	The genus-\(1_s\) sector has the same algebraic factor but a different
	band--gap decomposition, and therefore different values of
	\(T_1\), \(F_{\infty,1}\), and \(g_{\infty,1}\).
	
	In the genus-2 sector, write
	\[
	C_1(\xi)=c_1(\xi)^2,
	\qquad
	\operatorname{Im}c_1(\xi)>0,
	\]
	and define
	\begin{equation}\label{eq:alpha2}
		\alpha_2(z)
		=
		\alpha_1(z)
		\left[
		\frac{
			(z-c_1)(z+\overline c_1)
		}{
			(z+c_1)(z-\overline c_1)
		}
		\right]^{1/4}.
	\end{equation}
	Then it follows that
	\begin{equation}\label{eq:U2}
		\mathcal U_2(x,t)
		=
		2i
		\bigl(
		\operatorname{Im}E_2-\operatorname{Im}E_1
		-\operatorname{Im}c_1
		\bigr)
		T_2(\xi,t)
		\bigl(F_{\infty,2}(\xi)\bigr)^2
		e^{2it g_{\infty,2}(\xi)}.
	\end{equation}
	
	In the genus-3 sector, let
	\[
	B_j(\xi)=b_j(\xi)^2,
	\qquad
	\operatorname{Im}b_j(\xi)>0,
	\qquad j=1,2,
	\]
	and set
	\begin{equation}\label{eq:alpha3}
		\alpha_3(z)
		=
		\alpha_1(z)
		\left[
		\frac{
			(z+b_1)(z-\overline b_1)
			(z-b_2)(z+\overline b_2)
		}{
			(z-b_1)(z+\overline b_1)
			(z+b_2)(z-\overline b_2)
		}
		\right]^{1/4}.
	\end{equation}
	Consequently, we have
	\begin{equation}\label{eq:U3}
		\mathcal U_3(x,t)
		=
		2i
		\bigl(
		\operatorname{Im}E_2-\operatorname{Im}E_1
		+\operatorname{Im}b_1-\operatorname{Im}b_2
		\bigr)
		T_3(\xi,t)
		\bigl(F_{\infty,3}(\xi)\bigr)^2
		e^{2it g_{\infty,3}(\xi)}.
	\end{equation}
	
	The functions
	\(\mathcal U_1,\mathcal U_2,\mathcal U_3\), together with the
	genus-\(1_s\) variant of \(\mathcal U_1\), provide the leading
	finite-gap terms in the corresponding long-time asymptitics of each sectors, which are listed in Theorem \ref{time degen}.

	\subsection{Local parametrices and error analysis}
	
	We now complete the nonlinear steepest descent analysis in the finite-genus
	sectors. Fix a compact set of values of $\xi$ contained in one of the open
	sectors. We assume throughout that the branch points remain mutually
	separated, the modulation Jacobian is nonzero, the strict sign inequalities
	hold, and the theta denominators occurring in $m_n^{\mathrm{out}}$ stay away
	from zero. All estimates below are uniform under these assumptions.
	
	After the $g$-function transformation, the scalar normalization by
	$F_n^{\sigma_3}$, and the opening of lenses, the leading jumps on the bands
	are solved by $m_n^{\mathrm{out}}$. On every remaining contour separated from
	the band endpoints, the jump matrices contain factors
	$e^{\pm2it\widehat\Phi_n}$ whose moduli are exponentially small by the
	admissible signature table. Local parametrices are nevertheless required at
	all band endpoints. The fixed endpoints are hard edges and are described by
	Bessel parametrices, whereas the moving endpoints introduced by the
	modulation equations are soft edges and are described by Airy parametrices.
	
	\subsubsection{Fixed endpoints and the genus-1 sectors}
	
	Let
	\[
	\mathcal P_{\mathrm{fix}}
	=
	\{\pm E_1,\pm E_2,
	\pm\overline E_1,\pm\overline E_2\}.
	\]
	These points are present in every finite-genus sector. Let
	$p\in\mathcal P_{\mathrm{fix}}$, and choose the local sheet adjacent to the
	corresponding band. Since $p^2$ is a fixed branch point and, away from the
	transition rays,
	\[
	P_{n+2}(p^2,\xi)\neq0,
	\]
	we have
	\begin{equation}\label{eq:local-phase-fixed-endpoint}
		\widehat\Phi_n(\xi,z^2)
		-
		\widehat\Phi_n(\xi,p^2)
		=
		\beta_{n,p}(\xi)(z-p)^{1/2}
		\bigl(1+\mathcal O(z-p)\bigr),
		\qquad z\to p,
	\end{equation}
	where $\beta_{n,p}(\xi)\neq0$. The square root and the limiting value of the
	phase are taken on the chosen local sheet.
	
	Choose $\varepsilon_{n,p}\in\{\pm1\}$ and the local branches so that
	\begin{equation}\label{eq:local-hard-phase}
		\phi_{n,p}(z)
		:=
		i\varepsilon_{n,p}
		\left(
		\widehat\Phi_n(\xi,z^2)
		-
		\widehat\Phi_n(\xi,p^2)
		\right)
	\end{equation}
	produces the decaying exponentials on the two lens boundaries. The conformal
	coordinate is
	\begin{equation}\label{eq:bessel-coordinate}
		\zeta^{\mathrm B}_{n,p}(z)
		=
		\frac{t^2}{4}\phi_{n,p}(z)^2,
		\qquad
		2\bigl(\zeta^{\mathrm B}_{n,p}(z)\bigr)^{1/2}
		=
		t\phi_{n,p}(z).
	\end{equation}
	By \eqref{eq:local-phase-fixed-endpoint},
	$\zeta^{\mathrm B}_{n,p}$ is conformal in a sufficiently small fixed disk
	$D_p$ centered at $p$.
	
	The endpoint condition on the density is
	\[
	\varrho(z)
	=
	(z-p)^{1/2}\varrho_p(z),
	\qquad
	\varrho_p(p)\neq0,
	\]
	with the Schwarz-conjugate analogue on the conjugate bands. Consequently, the
	local RHP is the standard hard-edge Bessel problem of
	order $1/2$, up to a constant conjugation determined by the orientation of the
	band and the triangularity of the jump. Denote its solution by
	$\Psi_{\mathrm{Bes}}$. We use the normalization
	\begin{equation}\label{eq:bessel-asymptotics}
		\Psi_{\mathrm{Bes}}(\zeta)
		=
		\zeta^{-\frac14\sigma_3}M_{\mathrm B}
		\left(
		\mathbb I+\mathcal O(\zeta^{-1/2})
		\right)
		e^{-2\zeta^{1/2}\sigma_3},
		\qquad \zeta\to\infty,
	\end{equation}
	where $M_{\mathrm B}$ is a fixed unimodular matrix. Its behavior at
	$\zeta=0$ has the quarter-root form required by the scalar factor $F_n$ and
	the outer model.
	
	The local parametrix can therefore be written as
	\begin{equation}\label{eq:bessel-local-parametrix}
		P_p^{\mathrm B}(z)
		=
		E_p^{\mathrm B}(z)
		\Psi_{\mathrm{Bes}}
		\bigl(\zeta^{\mathrm B}_{n,p}(z)\bigr)
		e^{t\phi_{n,p}(z)\sigma_3}
		\mathcal C_{n,p}^{\mathrm B}(z),
		\qquad z\in D_p.
	\end{equation}
	Here $\mathcal C_{n,p}^{\mathrm B}(z)$ is analytic and invertible in
	$D_p$ and contains the nonzero analytic factors left in the local triangular
	jumps, together with the constant band phase and orientation matrices. The
	prefactor $E_p^{\mathrm B}$ is analytic and invertible in $D_p$ and is chosen
	so that the quarter-root singularity in \eqref{eq:bessel-asymptotics}
	cancels that of $m_n^{\mathrm{out}}$. Since
	$|\zeta^{\mathrm B}_{n,p}|\asymp t^2$ on $\partial D_p$, we obtain
	\begin{equation}\label{eq:bessel-matching}
		P_p^{\mathrm B}(z)
		=
		m_n^{\mathrm{out}}(z)
		\left(
		\mathbb I+\mathcal O(t^{-1})
		\right),
		\qquad z\in\partial D_p.
	\end{equation}

	\subsubsection{Moving endpoints and the Airy model}
	
	For $n=2,3$, let $\mathcal P_{\mathrm{mov}}^{(n)}$ denote the moving
	endpoints in the original $z$-plane:
	\[
	\mathcal P_{\mathrm{mov}}^{(2)}
	=
	\{\pm c_1,\pm\overline c_1\},
	\qquad
	\mathcal P_{\mathrm{mov}}^{(3)}
	=
	\{\pm b_1,\pm b_2,
	\pm\overline b_1,\pm\overline b_2\}.
	\]
	Let $p\in\mathcal P_{\mathrm{mov}}^{(n)}$. The soft-edge conditions
	\eqref{soft-edge-conditions} and the nondegeneracy condition
	$\partial_\lambda P_{n+2}(p^2,\xi)\neq0$ imply
	\begin{equation}\label{eq:local-phase-moving-endpoint}
		\widehat\Phi_n(\xi,z^2)
		-
		\widehat\Phi_n(\xi,p^2)
		=
		\gamma_{n,p}(\xi)(z-p)^{3/2}
		\bigl(1+\mathcal O(z-p)\bigr),
		\qquad z\to p,
	\end{equation}
	where $\gamma_{n,p}(\xi)\neq0$. Define
	\begin{equation}\label{eq:local-airy-phase}
		\phi_{n,p}(z)
		=
		i\varepsilon_{n,p}
		\left(
		\widehat\Phi_n(\xi,z^2)
		-
		\widehat\Phi_n(\xi,p^2)
		\right),
		\qquad
		\varepsilon_{n,p}\in\{\pm1\},
	\end{equation}
	with the sign and branch selected according to the local signature chart.
	The coordinate in the Airy model is
	\begin{equation}\label{eq:zeta-a}
		\zeta^{\mathrm A}_{n,p}(z)
		=
		\left[
		\frac{3t}{2}\phi_{n,p}(z)
		\right]^{2/3},
		\qquad
		\frac{2}{3}
		\bigl(\zeta^{\mathrm A}_{n,p}(z)\bigr)^{3/2}
		=
		t\phi_{n,p}(z).
	\end{equation}
	It is conformal in a fixed disk $D_p$ and satisfies
	$|\zeta^{\mathrm A}_{n,p}|\asymp t^{2/3}$ on $\partial D_p$.
	
	We use the following constant jump of the Airy model. Let
	$\omega=e^{2\pi i/3}$ and take the rays
	\[
	\Gamma_0:\arg\zeta=0,
	\quad
	\Gamma_+:\arg\zeta=\frac{2\pi}{3},
	\quad
	\Gamma_\pi:\arg\zeta=\pi,
	\quad
	\Gamma_-:\arg\zeta=-\frac{2\pi}{3}
	\]
	to be oriented from the origin to infinity.
	
	\begin{problem}\label{Airy}
		Find a $2\times2$ matrix $\Psi_{\mathrm{Ai}}(\zeta)$ with the following
		properties.
		\begin{enumerate}
			\item $\Psi_{\mathrm{Ai}}(\zeta)$ is analytic for
			$\zeta\in \mathbb C\setminus
			(\Gamma_0\cup\Gamma_+\cup\Gamma_\pi\cup\Gamma_-)$.
			
			\item Its boundary values satisfy the constant jump relations
			\[
			\Psi_{\mathrm{Ai},+}(\zeta)
			=
			\Psi_{\mathrm{Ai},-}(\zeta)
			\begin{cases}
				\begin{pmatrix}1&1\\0&1\end{pmatrix},
				&\zeta\in\Gamma_0,\\[4mm]
				\begin{pmatrix}1&0\\-1&1\end{pmatrix},
				&\zeta\in\Gamma_+\cup\Gamma_-,\\[4mm]
				\begin{pmatrix}0&-1\\1&0\end{pmatrix},
				&\zeta\in\Gamma_\pi.
			\end{cases}
			\]
			
			\item As $\zeta\to\infty$, with the powers taken on the principal
			branch and continued sectorially,
			\begin{equation}\label{eq:Airy-asymptotics}
				\Psi_{\mathrm{Ai}}(\zeta)
				=
				\zeta^{-\frac14\sigma_3}\frac{e^{-\pi i/4}}{\sqrt2}
				\begin{pmatrix}
					1&i\\-1&i
				\end{pmatrix}
				\left[
				\mathbb I+
				\mathcal O(\zeta^{-3/2})
				\right]
				e^{-\frac23\zeta^{3/2}\sigma_3}.
			\end{equation}
			
			\item $\Psi_{\mathrm{Ai}}(\zeta)$ is bounded as $\zeta\to0$ away from the
			rays.
		\end{enumerate}
	\end{problem}
	
	For completeness, this model has the following explicit solution. Define
	\[
	y_0(\zeta)=\operatorname{Ai}(\zeta),
	\qquad
	y_1(\zeta)=\omega\operatorname{Ai}(\omega\zeta),
	\qquad
	y_2(\zeta)=\omega^2\operatorname{Ai}(\omega^2\zeta),
	\]
	and
	$\mathbf y_j=(y_j,y_j')^{\mathsf T}~(j=0,1,2)$. Since
	$\mathbf y_0+\mathbf y_1+\mathbf y_2=0$, the matrix
	\begin{equation}\label{eq:explicit-Airy-model}
		\Psi_{\mathrm{Ai}}(\zeta)
		=
		\sqrt{2\pi}\,e^{-\pi i/4}
		\begin{cases}
			(\mathbf y_0,-\mathbf y_2),
			&0<\arg\zeta<\frac{2\pi}{3},\\[1mm]
			(-\mathbf y_1,-\mathbf y_2),
			&\frac{2\pi}{3}<\arg\zeta<\pi,\\[1mm]
			(-\mathbf y_2,\mathbf y_1),
			&-\pi<\arg\zeta<-\frac{2\pi}{3},\\[1mm]
			(\mathbf y_0,\mathbf y_1),
			&-\frac{2\pi}{3}<\arg\zeta<0,
		\end{cases}
	\end{equation}
	solves RHP~\ref{Airy}; here $(\mathbf a,\mathbf b)$ denotes the
	matrix with columns $\mathbf a$ and $\mathbf b$. In particular,
	$\det\Psi_{\mathrm{Ai}}(\zeta)\equiv1$.
	
	After a $z$-independent constant conjugation which accounts for the local
	orientation and the constant band phase, the local Airy parametrix has the
	form
	\begin{equation}\label{eq:Airy-parametrix-a}
		P_p^{\mathrm A}(z)
		=
		E_p^{\mathrm A}(z)
		\Psi_{\mathrm{Ai}}
		\bigl(\zeta^{\mathrm A}_{n,p}(z)\bigr)
		e^{t\phi_{n,p}(z)\sigma_3}
		\mathcal C_{n,p}^{\mathrm A}(z),
		\qquad z\in D_p.
	\end{equation}
	Here $\mathcal C_{n,p}^{\mathrm A}(z)$ is analytic and invertible in
	$D_p$ and incorporates the nonzero analytic jump coefficients, the local
	orientation matrices, and the constant value of the phase. The prefactor
	$E_p^{\mathrm A}$ is analytic and invertible in $D_p$ and is chosen so that
	the factor
	$(\zeta^{\mathrm A}_{n,p})^{-\sigma_3/4}$ in
	\eqref{eq:Airy-asymptotics} cancels the fourth-root singularity of
	$m_n^{\mathrm{out}}$. By \eqref{eq:zeta-a}, the exponential in
	\eqref{eq:Airy-asymptotics} cancels the exponential in
	\eqref{eq:Airy-parametrix-a}. Hence
	\begin{equation}\label{eq:Airy-matching}
		P_p^{\mathrm A}(z)
		=
		m_n^{\mathrm{out}}(z)
		\left(
		\mathbb I+
		\mathcal O(t^{-1})
		\right),
		\qquad z\in\partial D_p.
	\end{equation}

	\subsubsection{Global error problem}
	Let
	$\sharp\in\{1,1_s,2,3\}$, and write $\mathcal T_\sharp$ and
	$m_\sharp^{\mathrm{out}}$ for the corresponding final transformation and
	outer model. 
	For $n=2,3$, let
	\[
	\mathfrak D^{\mathrm B}
	=
	\bigcup_{p\in\mathcal P_{\mathrm{fix}}}D_p,
	\quad 
	\mathfrak D_n^{\mathrm A}
	=
	\bigcup_{p\in\mathcal P_{\mathrm{mov}}^{(n)}}D_p,
	\]
	where all disks are chosen mutually disjoint. The global approximation is
	\begin{equation}\label{eq:global-app-model}
		m_\sharp^{\mathrm{app}}(z)
		=
		\begin{cases}
			P_p^{\mathrm B}(z),
			&z\in D_p,\quad p\in\mathcal P_{\mathrm{fix}},\\[1mm]
			P_p^{\mathrm A}(z),
			&z\in D_p,\quad p\in\mathcal P_{\mathrm{mov}}^{(n)},\\[1mm]
			m_\sharp^{\mathrm{out}}(z),
			&z\notin
			\bigl(\mathfrak D^{\mathrm B}
			\cup\mathfrak D_\sharp^{\mathrm A}\bigr).
		\end{cases}
	\end{equation}
	Define
	\begin{equation}\label{eq:error-matrix-En}
		\mathcal E_\sharp(z)
		=
		\mathcal T_\sharp(z)
		\bigl(m_\sharp^{\mathrm{app}}(z)\bigr)^{-1}.
	\end{equation}
	On the lens contours outside the disks, we have
	\begin{equation}\label{eq:error-jump-lens}
		J_{\mathcal E_\sharp}(z)
		=
		\mathbb I+
		\mathcal O(e^{-ct}),
		\qquad c>0.
	\end{equation}
	On every Bessel or Airy circle,
	\[
	J_{\mathcal E_\sharp}(z)
	=
	\mathbb I+
	\mathcal O(t^{-1}).
	\]
	Consequently,
	\begin{equation}\label{eq:error-jump-norm}
		\left\|
		J_{\mathcal E_\sharp}-\mathbb I
		\right\|_{L^2\cap L^\infty}
		=
		\mathcal O(t^{-1}).
	\end{equation}
	The theory for small-norm RHP yields $\mathcal E_\sharp(z)
	=
	\mathbb I+
	\mathcal O\!\left(
	\frac{t^{-1}}{1+|z|}
	\right)$.
	In particular, the coefficient of $z^{-1}$ in the expansion of
	$\mathcal E_\sharp$ is $\mathcal O(t^{-1})$. Combining this estimate with
	\eqref{eq:Un-general}, we obtain, uniformly for $\xi$ in compact subsets of
	each open finite-genus sector,
	\begin{equation}\label{eq:finite-genus-final-asymptotics}
		u(x,t)
		=
		\mathcal U_\sharp(x,t)+\mathcal O(t^{-1}),
		\qquad
		\sharp\in\{1,1_s,2,3\},
	\end{equation}
	where $\mathcal U_{1_s}$ denotes the genus-1 expression with the
	band--gap data of the special signature chart.

	\section{Kinetic description of the DNLS soliton gas}
	\label{sec:dnls-kinetic}
	
	This section gives the kinetic equation of the DNLS soliton gas in each sector of the long-time asymptotics, which further proves that the soliton gas constructed in this work are regular soliton gas \cite{Grava-R}. The derivation is local in an open asymptotic sector. Thus we fix such a sector and denote by \(n\in\{1,2,3\}\) the corresponding effective quotient genus. In the rapidly decaying sector, the spectral density is identically zero, and the kinetic description is trivial.
	\par
	The natural spectral variable for the kinetic theory of the DNLS soliton gas is
	$
	\lambda=z^2.
	$
	Recall that
	$
	A_j=E_j^2$, $j=1,2$,
	and that $\Lambda$ is the image of the mother body segment $\mathscr I=[E_1,E_2]$ under $z\mapsto z^2$. The conjugate arc is denoted by \(\Lambda^*\). In a genus-\(n\) modulated sector we write \(\Lambda_n(\xi)\) for the union of the quotient bands of the corresponding \(g\)-function model. In the genus-1 sectors, \(\Lambda_n(\xi)\) reduces to the original quotient band \(\Lambda\); in the higher genus sectors it also contains the moving quotient bands generated by the additional branch points. The orientation of every component is inherited from the corresponding band in the \(z\)-plane.

	Let
	\[
	d\widehat\Phi_n(\xi,\lambda)
	=
	\widehat\Omega_n(\xi,\lambda)\,d\lambda
	\]
	be the quotient differential used in the construction of the genus-\(n\) \(g\)-function in the preceding section, where $d\widehat\Phi_n(\xi,\lambda)$ is given in \eqref{Abelian-differential-n}. On the upper support \(\Lambda_n\), define the cumulative spectral distribution by
	\begin{equation}\label{cumulative-spectral-distribution}
		\mathcal N_n(x,t,\lambda)
		:=
		\frac{t}{\pi}\widehat\Omega_{n,+}(\xi,\lambda),
		\qquad
		\xi=\frac{x}{t},
		\qquad
		\lambda\in\Lambda_n.
	\end{equation}
	Here \(+\) denotes the boundary value from the fixed side of the oriented arc \(\Lambda_n\).
	
	In the genus-1 sector, this definition gives the explicit formula
	\[
	\mathcal N_1(x,t,\lambda)
	=
	\frac{4t}{\pi}
	\frac{P_3(\lambda,\xi)}
	{\widehat {\mathcal{R}}_{1,+}(\lambda)},
	\qquad
	\lambda\in\Lambda_n,
	\]
	where
	$
	\widehat {\mathcal{R}}_1(\lambda)^2
	=
	(\lambda-A_1)(\lambda-A_2)
	(\lambda-\overline{A}_1)(\lambda-\overline{A}_2),
	$
	and \(P_3(\lambda,\xi)\) is the cubic polynomial appearing in the genus-1 quotient differential. In higher genus sectors, the same formula is replaced by
	\[
	\mathcal N_n(x,t,\lambda)
	=
	\frac{t}{\pi}
	\frac{d\widehat\Phi_n}{d\lambda}(\xi,\lambda)_+,
	\qquad n=2,3.
	\]
	It is characterized by the logarithmic potential relation
	\[
	\widehat\phi_n(x,t,\lambda_0)
	:=
	i\int_{\Lambda_n(\xi)}
	\log\left(
	\frac{\lambda_0-\overline{\mu}}
	{\lambda_0-\mu}
	\right)
	\mathcal N_n(x,t,\mu)\,d\mu,
	\qquad
	\lambda_0\in\mathbb C\setminus\bigl(\Lambda_n(\xi)\cup\Lambda_n^*(\xi)\bigr),
	\]
	where the branch of the logarithm is chosen so that the logarithm tends to zero as \(\lambda_0\to\infty\). Equivalently, \(\mathcal N_n(x,t,\lambda)\) is the density whose Cauchy logarithmic potential reproduces the quotient \(g\)-function. In particular,
	\[
	\widehat\phi_{n,+}(x,t,\lambda)+\widehat\phi_{n,-}(x,t,\lambda)
	=
	-2x\lambda-4t\lambda^2,
	\qquad
	\lambda\in\Lambda_n(\xi).
	\]
	This is the band condition inherited from the quotient \(g\)-function construction.
	
	The dressed phase of a point with quotient spectral parameter \(\lambda_0\) is then
	\begin{equation}
		\widehat\Psi_n(x,t,\lambda_0)
		:=
		\widehat\phi_n(x,t,\lambda_0)
		+
		x\lambda_0+2t\lambda_0^2.
	\end{equation}
	Thus \(\widehat\Psi_n\) is analytic for \(\lambda_0\notin\Lambda_n(\xi)\cup\Lambda_n(\xi)^*\), satisfies the normalization
	\[
	\widehat\Psi_n(x,t,\lambda_0)
	=
	x\lambda_0+2t\lambda_0^2+\mathcal O(\lambda_0^{-1}),
	\qquad
	\lambda_0\to\infty,
	\]
	and has the band relation
	\[
	\widehat\Psi_{n,+}(x,t,\lambda)
	+
	\widehat\Psi_{n,-}(x,t,\lambda)
	=
	0,
	\qquad
	\lambda\in\Lambda_n(\xi).
	\]
	
	The origin of the logarithmic term is the fourfold spectral symmetry of the DNLS equation. For a finite reflectionless configuration, a test soliton with spectral point \(z_0\) interacts not only with a pole \(s\), but also with its symmetric partners \(-s\), \(\overline{s}\), and \(-\overline{s}\). The corresponding product of elementary phase-shift factors has the form
	\[
	\frac{(z_0-\overline{s})(z_0+\overline{s})}
	{(z_0-s)(z_0+s)}
	=
	\frac{z_0^2-\overline{s}^{\,2}}
	{z_0^2-s^2}
	=
	\frac{\lambda_0-\overline{\mu}}
	{\lambda_0-\mu},
	\qquad
	\lambda_0=z_0^2,\quad \mu=s^2.
	\]
	Passing from the discrete product to the continuum limit gives precisely the logarithmic potential above. This is the specific mechanism behind the kinetic kernel.
	
	Define the local kinetic density by
	\begin{equation}\label{kinetic-density}
		\mathfrak f_n(x,t,\lambda)
		:=
		\partial_x\mathcal N_n(x,t,\lambda).
	\end{equation}
	All \(x\)- and \(t\)-derivatives are total derivatives in the sector. The effective velocity of the spectral component \(\lambda\in\Lambda_n(\xi)\) is defined by
	\begin{equation}\label{eff velocity}
		v_{{\rm eff},n}(x,t,\lambda)
		:=
		-\frac{\partial_t\mathcal N_n(x,t,\lambda)}
		{\partial_x\mathcal N_n(x,t,\lambda)}
		=
		-\frac{\partial_t\mathcal N_n(x,t,\lambda)}
		{\mathfrak f_n(x,t,\lambda)},
	\end{equation}
	whenever \(\mathfrak f_n\ne0\), and by continuous extension otherwise. Equivalently, we have
	\[
	\partial_t\mathcal N_n(x,t,\lambda)
	+
	v_{{\rm eff},n}(x,t,\lambda)
	\partial_x\mathcal N_n(x,t,\lambda)
	=
	0.
	\]
	Differentiating this identity with respect to \(x\) yields the local conservation law
	\[
	\partial_t\mathfrak f_n(x,t,\lambda)
	+
	\partial_x\Bigl(
	v_{{\rm eff},n}(x,t,\lambda)\mathfrak f_n(x,t,\lambda)
	\Bigr)
	=
	0,
	\qquad
	\lambda\in\Lambda_n(\xi).
	\]
	
	We next consider a distinguished test soliton with spectral parameter
	\[
	\lambda_0=z_0^2,\qquad \operatorname{Im}\lambda_0\ne0,
	\]
	lying off the active spectral support. Its center is determined by a level curve
	\[
	\operatorname{Im}\widehat\Psi_n(x,t,\lambda_0)=\mathrm{constant}.
	\]
	Therefore, if \(x=x(t)\) is the trajectory of the test soliton, its velocity is
	\[
	v_{\rm test}(x,t;\lambda_0)
	:=
	x'(t)
	=
	-
	\frac{\partial_t\operatorname{Im}\widehat\Psi_n(x,t,\lambda_0)}
	{\partial_x\operatorname{Im}\widehat\Psi_n(x,t,\lambda_0)}.
	\]
	Introduce the real logarithmic kernel
	\[
	\mathcal K(\lambda,\mu)
	:=
	\log\left|
	\frac{\lambda-\overline{\mu}}
	{\lambda-\mu}
	\right|.
	\]
	Taking the imaginary part of the definition of \(\widehat\Psi_n\) gives
	\begin{equation}
		\operatorname{Im}\widehat\Psi_n(x,t,\lambda_0)
		=
		\int_{\Lambda_n(\xi)}
		\mathcal K(\lambda_0,\mu)
		\mathcal N_n(x,t,\mu)\,d\mu
		+
		x\operatorname{Im}\lambda_0
		+
		2t\operatorname{Im}(\lambda_0^2).
	\end{equation}
	
	Along the trajectory of the test soliton,
	$
	\frac{d}{dt}
	\operatorname{Im}\widehat\Psi_n(x(t),t,\lambda_0)=0.
	$
	Hence
	\[
	0=
	\int_{\Lambda_n(\xi)}
	\mathcal K(\lambda_0,\mu)
	\Bigl[
	\partial_t\mathcal N_n(x,t,\mu)
	+
	v_{\rm test}(x,t;\lambda_0)
	\partial_x\mathcal N_n(x,t,\mu)
	\Bigr]\,d\mu
	+
	v_{\rm test}(x,t;\lambda_0)\operatorname{Im}\lambda_0
	+
	2\operatorname{Im}(\lambda_0^2).
	\]
	Using \eqref{kinetic-density} and \eqref{eff velocity},
	we obtain
	\begin{equation}\label{test-velocity}
		v_{\rm test}(x,t;\lambda_0)
		=
		v_0(\lambda_0)
		+
		\frac{1}{\operatorname{Im}\lambda_0}
		\int_{\Lambda_n(\xi)}
		\mathcal K(\lambda_0,\mu)
		\Bigl[
		v_{{\rm eff},n}(x,t,\mu)
		-
		v_{\rm test}(x,t;\lambda_0)
		\Bigr]
		\mathfrak f_n(x,t,\mu)\,d\mu,
	\end{equation}
	where the free velocity is
	\[
	v_0(\lambda)
	:=
	-2\frac{\operatorname{Im}(\lambda^2)}{\operatorname{Im}\lambda}
	=
	-4\operatorname{Re}\lambda.
	\]
	Thus the bare velocity is precisely the one-soliton velocity of the DNLS equation \eqref{dnls3} which is in agreement with \eqref{eq:free-soliton-velocity-a}.
	
	Let \(\lambda\in\Lambda_n(\xi)\). From the band condition for \(\widehat\phi_n\), we have
	\[
	2\int_{\Lambda_n(\xi)}
	\mathcal K(\lambda,\mu)
	\mathcal N_n(x,t,\mu)\,d\mu
	=
	-2x\operatorname{Im}\lambda
	-
	4t\operatorname{Im}(\lambda^2).
	\]
	Differentiating this identity with respect to \(t\) and \(x\) respectively and substituting \eqref{kinetic-density} and \eqref{eff velocity},
	it is obtained that
	\[
	\int_{\Lambda_n(\xi)}
	\mathcal K(\lambda,\mu)
	v_{{\rm eff},n}(x,t,\mu)
	\mathfrak f_n(x,t,\mu)\,d\mu
	=
	2\operatorname{Im}(\lambda^2),
	\quad
	\int_{\Lambda_n(\xi)}
	\mathcal K(\lambda,\mu)
	\mathfrak f_n(x,t,\mu)\,d\mu
	=
	-\operatorname{Im}\lambda.
	\]
	Combining the two identities gives
	\[
	\int_{\Lambda_n(\xi)}
	\mathcal K(\lambda,\mu)
	\Bigl[
	v_{{\rm eff},n}(x,t,\mu)
	-
	v_{{\rm eff},n}(x,t,\lambda)
	\Bigr]
	\mathfrak f_n(x,t,\mu)\,d\mu
	=
	2\operatorname{Im}(\lambda^2)
	+
	v_{{\rm eff},n}(x,t,\lambda)\operatorname{Im}\lambda.
	\]
	Solving this relation for \(v_{{\rm eff},n}(x,t,\lambda)\), the kinetic equation of the DNLS soliton gas is derived 
	\begin{equation}\label{efftive-velocity}
		v_{{\rm eff},n}(x,t,\lambda)
		=
		-4\operatorname{Re}\lambda
		+
		\frac{1}{\operatorname{Im}\lambda}
		\int_{\Lambda_n(\xi)}
		\log\left|
		\frac{\lambda-\overline{\mu}}
		{\lambda-\mu}
		\right|
		\Bigl[
		v_{{\rm eff},n}(x,t,\mu)
		-
		v_{{\rm eff},n}(x,t,\lambda)
		\Bigr]
		\mathfrak f_n(x,t,\mu)\,d\mu,
		\qquad
		\lambda\in\Lambda_n(\xi).
	\end{equation}

	\section{Fredholm determinants and the continuum \texorpdfstring{$\tau$}{tau}-function}
	\label{sec:tau-function}
	
	The preceding sections describe the continuum DNLS soliton gas through a compactly supported $\bar\partial$-problem and, in the elliptic setting, through the Riemann--Hilbert reduction of mother body. The purpose of this section is complementary: an operator-theoretic characterization of the same continuum object is given.
	\par
	There are three points to be established. First, the continuum $\bar\partial$-problem can be rewritten as a Fredholm equation of block Hilbert--Schmidt type, so that its solvability is governed by the non-vanishing of a Fredholm determinant. Second, this determinant is obtained as the continuum limit of the finite dimensional determinants associated with the pure $N$-soliton residue systems. Third, the squared modulus of the DNLS equation \eqref{dnls3} is recovered from the logarithmic derivative of the resulting determinant ratio.
	\par
	The spectral symmetry associated with the DNLS equation \eqref{dnls3} plays a structural role in this construction. In the original $z$-plane, the discrete spectrum occurs in quadruples
	\[
	z,\quad -z,\quad \overline z,\quad -\overline z.
	\]
	The quotient map $\lambda=z^2$ identifies the two points $z$ and $-z$. Their contribution to the Cauchy kernel, however, is not lost; it is retained in the characteristic numerator $2z_+(\lambda)$ of the quotient operator introduced below.
	
	Let
	$
	\pi:\mathbb C_z\to\mathbb C_\lambda,
	\ 
	\pi(z)=z^2,
	$
	be the quotient map. We assume that the condensation domain $\mathcal D_+$ is contained in the first quadrant and stays a positive distance away from the coordinate axes. Then the restriction of $\pi$ to $\mathcal D_+$ is one-to-one, and its image is contained in the upper half-plane. Set
	\[
	\mathcal D:=\pi(\mathcal D_+)
	=
	\{\lambda\in\mathbb C_\lambda:\lambda=z^2,\ z\in\mathcal D_+\},
	\qquad
	\mathcal D^*:=\overline{\mathcal D}.
	\]
	Thus $\mathcal D\Subset\mathbb C^+$ and $\mathcal D^*\Subset\mathbb C^-$; in particular,
	$
	\operatorname{dist}(\mathcal D,\mathcal D^*)>0.
	$
	We denote by $z_+(\lambda)$ the inverse branch of the square root on $\mathcal D$ characterized by
	\[
	z_+(\lambda)^2=\lambda,
	\qquad
	z_+(\lambda)\in\mathcal D_+,
	\qquad
	\lambda\in\mathcal D.
	\]
	Then $z_+(\lambda)$ is the representative of the quotient class ${z,-z}$ lying on the selected sheet $\mathcal D_+$.
	
	Let $dA_\lambda=d^2\lambda$ denote planar Lebesgue measure in $\lambda$-plane. We use the normalized area measure
	\[
	d\nu(\lambda):=\frac{1}{\pi}dA_\lambda.
	\]
	Let $f(z,\bar z)$ be the density appearing in the original compactly supported $\bar\partial$-problem on $\mathcal D_+$. We denote by $\widehat f$ its push-forward to the quotient domain $\mathcal D$. More precisely, for $\lambda\in\mathcal D$,
	\[
	\widehat f(\lambda)
	:=
	\frac{
		f\bigl(z_+(\lambda),\overline{z_+(\lambda)}\bigr)
	}{
		4|z_+(\lambda)|^2
	}.
	\]
	The denominator is the Jacobian factor for the change of variables $\lambda=z^2$, since
	$
	dA_\lambda
	=\left|\frac{d\lambda}{dz}\right|^2dA_z
	=
	4|z|^2dA_z.
	$
	Equivalently, on the selected sheet,
	$
	f(z,\bar z)dA_z
	=
	\widehat f(\lambda)dA_\lambda.
	$
	
	The Schwarz conjugate quotient density on $\mathcal D^*$ is defined by
	$
	\widehat f^{*}(\omega)
	:=
	\overline{\widehat f(\overline\omega)},
	\ 
	\omega\in\mathcal D^*.
	$
	Thus the density on the lower quotient domain is determined from the density on $\mathcal D$ by Schwarz reflection.

	\begin{assumption}
		\label{ass:tau-density}
		The quotient densities satisfy
		\[
		\widehat f\in L^\infty(\mathcal D)\cap L^2(\mathcal D),
		\qquad
		\widehat f^{\,*}\in L^\infty(\mathcal D^*)\cap L^2(\mathcal D^*).
		\]
		Moreover, the branches of \(\sqrt{\widehat f}\) on \(\mathcal D\) and \(\sqrt{\widehat f^{\,*}}\) on \(\mathcal D^*\) are fixed once and for all.
	\end{assumption}
	
	The choice of the square-root branches is immaterial for the determinant. Changing a branch conjugates the operator below by multiplication by a sign and leaves the Fredholm determinant unchanged.
	
	\begin{remark}
		The notation in this section separates three different densities. The function \(f\) is the density in the two-dimensional \(z\)-plane \(\bar\partial\)-problem. The function \(\widehat f\) is its quotient push-forward to the \(\lambda\)-plane. The mother body density \(\varrho\), used in the elliptic RHP, is a different object obtained after collapsing the \(\bar\partial\)-support to the mother body of the condensation domain.
	\end{remark}
	
	\subsection{Block Hilbert--Schmidt operators and regularized determinants}
	\label{subsec:block-HS-determinant}
	
	We first isolate the operator-theoretic mechanism \cite{Simon} needed later.
	Let \(H_+\) and \(H_-\) be complex Hilbert spaces. If
	$
	Q:H_-\to H_+
	$
	is Hilbert--Schmidt, denote by
	$
	Q^{\mathsf T}:H_+\to H_-
	$
	the transpose with respect to the bilinear pairing, not the Hermitian adjoint. Thus, if \(Q\) is represented by a kernel \(Q(\lambda,\omega)\), then \(Q^{\mathsf T}\) is represented by the transposed kernel with the two variables interchanged in the bilinear pairing.
	
	Define the block operator
	\[
	\mathcal K_Q
	:=
	\begin{pmatrix}
		0 & -Q\\
		Q^{\mathsf T} & 0
	\end{pmatrix}
	:
	H_+\oplus H_-\to H_+\oplus H_-.
	\]
	
	\begin{prop}
		\label{prop:block-det}
		Let \(Q:H_-\to H_+\) be Hilbert--Schmidt map. Then \(\mathcal K_Q\) is Hilbert--Schmidt, while
		\[
		Q^{\mathsf T}Q:H_-\to H_-,
		\qquad
		QQ^{\mathsf T}:H_+\to H_+
		\]
		are trace class \cite{Trace}. Moreover,
		\begin{equation}\label{Hilbert--Carleman determinant}
			\det_2(\mathbb I-\mathcal K_Q)
			=
			\det_{H_-}(\mathbb I+Q^{\mathsf T}Q)
			=
			\det_{H_+}(\mathbb I+QQ^{\mathsf T}),
		\end{equation}
		where \(\substack{\mathrm{\det}\\ 2}\) is the Hilbert--Carleman determinant and the determinants on the right-hand side are ordinary Fredholm determinants.
	\end{prop}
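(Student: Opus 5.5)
The plan is to reduce everything to finite-rank operators, where the identity is elementary block linear algebra. Then pass to the limit using continuity of $\det_2$ in the Hilbert--Schmidt norm and of $\det$ in the trace norm.

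\textbf{Step 1: the operator-class statements.} The transpose $Q^{\mathsf T}$ with respect to the bilinear pairing equals $C_-\,Q^*\,C_+$, where $C_\pm$ are the antiunitary complex conjugations defining the pairings on $H_\pm$. Hence $Q^{\mathsf T}$ is Hilbert--Schmidt with $\|Q^{\mathsf T}\|_2=\|Q\|_2$; in kernel language this is just $\iint|Q(\omega,\lambda)|^2=\iint|Q(\lambda,\omega)|^2$. The Hilbert--Schmidt norm of a block operator is the $\ell^2$-sum of the norms of its blocks, so
\[
\|\mathcal K_Q\|_2^2=\|Q\|_2^2+\|Q^{\mathsf T}\|_2^2=2\|Q\|_2^2<\infty .
\]
The products $Q^{\mathsf T}Q$ and $QQ^{\mathsf T}$ are products of two Hilbert--Schmidt operators, hence trace class, with $\|Q^{\mathsf T}Q\|_1\le\|Q\|_2^2$.

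\textbf{Step 2: the finite-rank case.} Let $Q$ have finite rank, so $\mathcal K_Q$ is trace class. Since its diagonal blocks vanish, $\operatorname{tr}\mathcal K_Q=0$. The relation $\det_2(\mathbb I-A)=\det(\mathbb I-A)e^{\operatorname{tr}A}$ for trace-class $A$ then gives $\det_2(\mathbb I-\mathcal K_Q)=\det(\mathbb I-\mathcal K_Q)$. Next use the Schur-complement factorization
\[
\mathbb I-\mathcal K_Q=\begin{pmatrix}\mathbb I & Q\\ -Q^{\mathsf T} & \mathbb I\end{pmatrix}
=\begin{pmatrix}\mathbb I & 0\\ -Q^{\mathsf T} & \mathbb I\end{pmatrix}
\begin{pmatrix}\mathbb I & Q\\ 0 & \mathbb I+Q^{\mathsf T}Q\end{pmatrix}.
\]
Each factor is the identity plus a finite-rank operator. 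All relevant ranges lie in a finite-dimensional subspace of the form $V_+\oplus V_-$ that is invariant under the relevant maps. On that subspace the Fredholm determinant is the ordinary matrix determinant, and block-triangular matrices have determinant equal to the product of the diagonal blocks. The first factor therefore has determinant $1$, and the second has determinant $\det_{H_-}(\mathbb I+Q^{\mathsf T}Q)$. By multiplicativity of $\det$,
\[
\det_2(\mathbb I-\mathcal K_Q)=\det_{H_-}(\mathbb I+Q^{\mathsf T}Q).
\]
Factoring in the other order, with the Schur complement taken on the $H_+$ block, gives $\det_{H_+}(\mathbb I+QQ^{\mathsf T})$. Alternatively, this second equality follows from Sylvester's identity $\det(\mathbb I+AB)=\det(\mathbb I+BA)$ for Hilbert--Schmidt $A,B$.

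\textbf{Step 3: approximation.} For general Hilbert--Schmidt $Q$, choose finite-rank $Q_n$ with $\|Q_n-Q\|_2\to0$, for example by truncating the singular value expansion. By Step 1, $\|\mathcal K_{Q_n}-\mathcal K_Q\|_2=\sqrt2\,\|Q_n-Q\|_2\to0$, so continuity of $\det_2$ on the Hilbert--Schmidt class gives convergence of the left-hand side. For the right-hand side,
\[
\|Q_n^{\mathsf T}Q_n-Q^{\mathsf T}Q\|_1\le\|Q_n^{\mathsf T}\|_2\,\|Q_n-Q\|_2+\|Q_n^{\mathsf T}-Q^{\mathsf T}\|_2\,\|Q\|_2\to0,
\]
and the Fredholm determinant is continuous in trace norm; the same estimate applies to $QQ^{\mathsf T}$. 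Passing to the limit in the finite-rank identity of Step 2 yields \eqref{Hilbert--Carleman determinant}.

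\textbf{Main obstacle.} The delicate point is Step 2 in infinite dimensions. There one must not write $\det(\mathbb I-\mathcal K_Q)$ for a merely Hilbert--Schmidt $\mathcal K_Q$, nor treat the unipotent triangular factor as if it had a trace-class perturbation. Working only with finite-rank $Q$, and reducing to finite-dimensional invariant subspaces, avoids both issues. The vanishing trace of $\mathcal K_{Q_n}$ is exactly what removes the exponential regularizing factor relating $\det_2$ to $\det$.
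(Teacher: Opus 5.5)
Your proposal is correct and follows the paper's proof essentially step by step. Both arguments run: Hilbert--Schmidt and trace-class membership; the finite-rank case via $\operatorname{Tr}\mathcal K_Q=0$ (so $\det_2=\det$) and the Schur-complement factorization; then approximation using continuity of $\det_2$ in $\mathfrak S_2$ and of $\det$ in $\mathfrak S_1$. Your identification $Q^{\mathsf T}=C_-Q^*C_+$ and your reduction to the invariant finite-dimensional subspace $V_+\oplus V_-$ make rigorous two points the paper states only briefly.
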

	
	\begin{proof}
		Let \(\mathfrak S_p\) denote the Schatten class; in particular,
		\(\mathfrak S_2\) is the Hilbert--Schmidt class and
		\(\mathfrak S_1\) is the trace class. Since \(Q\in\mathfrak S_2(H_-,H_+)\), its transpose \(Q^{\mathsf T}\in\mathfrak S_2(H_+,H_-)\). Hence
		$
		\mathcal K_Q=
		$
		is Hilbert--Schmidt on \(H_+\oplus H_-\). Moreover, products of two Hilbert--Schmidt operators are trace class, so
		\[
		Q^{\mathsf T}Q\in\mathfrak S_1(H_-),
		\qquad
		QQ^{\mathsf T}\in\mathfrak S_1(H_+).
		\]
		
		We first prove the determinant identity when \(Q\) has finite rank. In this case \(\mathcal K_Q\) is finite rank, hence trace class. Since \(\mathcal K_Q\) has zero diagonal blocks, we have
		$
		\operatorname{Tr}\mathcal K_Q=0.
		$
		Therefore, by the definition of the regularized determinant,
		\[
		\det_2(\mathbb I-\mathcal K_Q)
		=
		\det(\mathbb I-\mathcal K_Q)e^{\operatorname{Tr}\mathcal K_Q}
		=
		\det(\mathbb I-\mathcal K_Q).
		\]
		On the finite-dimensional range generated by \(Q\) and \(Q^{\mathsf T}\), we may use the Schur complement:
		\[
		\mathbb -\mathcal K_Q
		=
		\begin{pmatrix}
			\mathbb I & Q\\
			-Q^{\mathsf T} & \mathbb I
		\end{pmatrix}.
		\]
		Taking the Schur complement with respect to the upper-left block gives
		\[
		\det(\mathbb I-\mathcal K_Q)
		=
		\det(\mathbb I)\det\bigl(\mathbb I+Q^{\mathsf T}Q\bigr)
		=
		\det_{H_-}\bigl(\mathbb I+Q^{\mathsf T}Q\bigr).
		\]
		Taking instead the Schur complement with respect to the lower-right block, it yields that \eqref{Hilbert--Carleman determinant}.
		
		Now let \(Q\in\mathfrak S_2(H_-,H_+)\) be arbitrary. Choose finite-rank operators \(Q_m\) such that
		$
		\|Q_m-Q\|_{\mathfrak S_2}\to0.
		$
		Set
		\[
		\mathcal K_m:=
		\begin{pmatrix}
			0 & -Q_m\\
			Q_m^{\mathsf T} & 0
		\end{pmatrix}.
		\]
		Then
		$
		\|\mathcal K_m-\mathcal K_Q\|_{\mathfrak S_2}\to0.
		$
		Since the regularized determinant is continuous in the Hilbert--Schmidt norm,
		\[
		\det_2(\mathbb I-\mathcal K_m)\to \det_2(\mathbb I-\mathcal K_Q).
		\]
		On the other hand,
		\[
		Q_m^{\mathsf T}Q_m-Q^{\mathsf T}Q
		=
		(Q_m^{\mathsf T}-Q^{\mathsf T})Q_m
		+
		Q^{\mathsf T}(Q_m-Q).
		\]
		Then we get
		$
		\|Q_m^{\mathsf T}Q_m-Q^{\mathsf T}Q\|_{\mathfrak S_1}
		\to 0.
		$
		Hence, by continuity of the Fredholm determinant in the trace norm, it follows that
		\[
		\det_{H_-}(\mathbb I+Q_m^{\mathsf T}Q_m)
		\to
		\det_{H_-}(\mathbb I+Q^{\mathsf T}Q).
		\]
		Similarly,
		$
		\det_{H_+}(\mathbb I+Q_mQ_m^{\mathsf T})
		\to
		\det_{H_+}(\mathbb I+QQ^{\mathsf T}).
		$
		Passing to the limit in the finite-rank identity proves \eqref{Hilbert--Carleman determinant}.
	\end{proof}
	
	\begin{prop}\label{prop:block-invertibility}
		The block operator $I-\mathcal K_Q$ is invertible if and only if either, and hence both, of the Schur-complement operators
		\[
		\mathbb I+Q^{\mathsf T}Q:H_-\to H_-,
		\qquad
		\mathbb I+QQ^{\mathsf T}:H_+\to H_+
		\]
		are invertible. In particular, since $Q^{\mathsf T}Q$ and $QQ^{\mathsf T}$ are trace class,
		$
		\det_{H_-} (\mathbb I+Q^{\mathsf T}Q)\ne0
		$
		is equivalent to the invertibility of $\mathbb I-\mathcal K_Q$.
	\end{prop}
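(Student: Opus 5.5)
The plan is to prove the invertibility equivalence by writing down explicit inverses through the block Schur factorization, and then to get the determinant statement from Proposition~\ref{prop:block-det} together with the Fredholm alternative. Write
\[
\mathbb I-\mathcal K_Q=
\begin{pmatrix}
\mathbb I & Q\\
-Q^{\mathsf T} & \mathbb I
\end{pmatrix}
\]
on $H_+\oplus H_-$. The upper-left block is the identity, so we have the factorization
\[
\begin{pmatrix}
\mathbb I & Q\\
-Q^{\mathsf T} & \mathbb I
\end{pmatrix}
=
\begin{pmatrix}
\mathbb I & 0\\
-Q^{\mathsf T} & \mathbb I
\end{pmatrix}
\begin{pmatrix}
\mathbb I & 0\\
0 & \mathbb I+Q^{\mathsf T}Q
\end{pmatrix}
\begin{pmatrix}
\mathbb I & Q\\
0 & \mathbb I
\end{pmatrix}.
\]
Multiplying out the right-hand side confirms this. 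The two outer triangular factors are bounded and invertible, with inverses obtained by flipping the sign of the off-diagonal entry. Therefore $\mathbb I-\mathcal K_Q$ is invertible exactly when the middle block-diagonal factor is, which is exactly when $\mathbb I+Q^{\mathsf T}Q$ is invertible on $H_-$.

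The symmetric factorization uses the lower-right identity block:
\[
\begin{pmatrix}
\mathbb I & Q\\
-Q^{\mathsf T} & \mathbb I
\end{pmatrix}
=
\begin{pmatrix}
\mathbb I & Q\\
0 & \mathbb I
\end{pmatrix}
\begin{pmatrix}
\mathbb I+QQ^{\mathsf T} & 0\\
0 & \mathbb I
\end{pmatrix}
\begin{pmatrix}
\mathbb I & 0\\
-Q^{\mathsf T} & \mathbb I
\end{pmatrix}.
\]
The same reasoning gives the equivalence with invertibility of $\mathbb I+QQ^{\mathsf T}$ on $H_+$. So "either" and "both" coincide, because each condition is equivalent to invertibility of $\mathbb I-\mathcal K_Q$.

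The equivalence of the two Schur complements can also be checked directly, which is a useful consistency test. If $R=(\mathbb I+Q^{\mathsf T}Q)^{-1}$, then $\mathbb I-QRQ^{\mathsf T}$ inverts $\mathbb I+QQ^{\mathsf T}$; this is the standard identity behind $\sigma(AB)\setminus\{0\}=\sigma(BA)\setminus\{0\}$.

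For the determinant statement, $Q^{\mathsf T}Q$ is trace class by Proposition~\ref{prop:block-det}, hence compact. For $\mathbb I+T$ with $T$ trace class, the standard fact \cite{Simon} is that $\det(\mathbb I+T)\neq0$ if and only if $\mathbb I+T$ is invertible. One direction follows from the Fredholm alternative combined with the product formula for determinants. The other follows because, if $-1$ is an eigenvalue of $T$, then $\det(\mathbb I+T)=\prod(1+\lambda_j(T))$ vanishes by Lidskii's theorem. Combining this with the first part gives the claim. As a cross-check, one can note that $\det_2(\mathbb I-\mathcal K_Q)\neq0$ is also equivalent to invertibility for Hilbert--Schmidt $\mathcal K_Q$, and this agrees with the identity \eqref{Hilbert--Carleman determinant}.

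I expect no serious obstacle. The only point needing care is that $Q^{\mathsf T}$ is the bilinear transpose rather than the adjoint, so $\mathbb I+Q^{\mathsf T}Q$ need not be positive and its invertibility is not automatic. The argument above never uses positivity; it relies only on the algebraic factorization and on the trace-class determinant criterion, so the distinction does not cause a problem.
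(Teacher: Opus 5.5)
Your proposal is correct and follows essentially the same route as the paper. The paper uses the same two Schur factorizations of $\mathbb I-\mathcal K_Q$ (it merges your diagonal and triangular factors into a single block-triangular factor) and then applies the same trace-class criterion $\det(\mathbb I+T)\neq0 \iff \mathbb I+T$ invertible. One small correction: the product formula by itself gives ``invertible $\Rightarrow$ determinant nonzero,'' while the Fredholm alternative belongs to the other direction, ``not invertible $\Rightarrow$ $-1$ is an eigenvalue $\Rightarrow$ determinant vanishes.''
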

	
	\begin{proof}
		Use the block factorization
		\[
		\mathbb I-\mathcal K_Q=\begin{pmatrix}
			\mathbb I & Q\\
			-Q^{\mathsf T} & \mathbb I
		\end{pmatrix}
		=
		\begin{pmatrix}
			\mathbb I & 0\\
			-Q^{\mathsf T} & \mathbb I
		\end{pmatrix}
		\begin{pmatrix}
			\mathbb I & Q\\
			0 & \mathbb I+Q^{\mathsf T}Q
		\end{pmatrix}.
		\]
		The first factor is always invertible, with inverse
		$
		\begin{pmatrix}
			\mathbb I & 0\\
			Q^{\mathsf T} & \mathbb I
		\end{pmatrix}.
		$
		Therefore $\mathbb I-\mathcal K_Q$ is invertible if and only if $\mathbb I+Q^{\mathsf T}Q$ is invertible on $H_-$.

		Similarly,
		\[
		\mathbb I-\mathcal K_Q=\begin{pmatrix}
			\mathbb I & Q\\
			-Q^{\mathsf T} & \mathbb I
		\end{pmatrix}
		=
		\begin{pmatrix}
			\mathbb I & Q\\
			0 & \mathbb I
		\end{pmatrix}
		\begin{pmatrix}
			\mathbb I+QQ^{\mathsf T} & 0\\
			-Q^{\mathsf T} & \mathbb I
		\end{pmatrix},
		\]
		and the first factor is always invertible. Hence \(\mathbb I-\mathcal K_Q\) is invertible if and only if \(\mathbb I+QQ^{\mathsf T}\) is invertible on \(H_+\).
		
		Finally, since \(Q\) and \(Q^{\mathsf T}\) are Hilbert--Schmidt operators, the products \(Q^{\mathsf T}Q\) and \(QQ^{\mathsf T}\) are trace class. For a trace-class operator \(\cdot\), the Fredholm determinant satisfies
		\[
		\det(\mathbb I+\cdot)\ne0
		\quad\Longleftrightarrow\quad
		\mathbb I+\cdot\text{ is invertible}.
		\]
		
	\end{proof}

	\subsection{The quotient Fredholm operator of the DNLS equation}
	\label{subsec:dnls-quotient-operator}
	
	We now construct the operator associated with the continuum soliton gas of the DNLS equation. Set
	\[
	H_+:=L^2(\mathcal D,d\nu),
	\qquad
	H_-:=L^2(\mathcal D^*,d\nu).
	\]
	Define
	$
	Q_+(x,t):H_-\to H_+
	$
	by
	\begin{equation}\label{H-S positive}
		(Q_+(x,t)h)(\lambda)
		=
		\int_{\mathcal D^*}
		Q_+(x,t;\lambda,\omega)h(\omega)\,d\nu(\omega),
		\qquad
		\lambda\in\mathcal D,
	\end{equation}
	where
	\begin{equation}\label{kernel of H-S}
		Q_+(x,t;\lambda,\omega)
		=
		\frac{
			2i\,z_+(\lambda)
			\sqrt{\widehat f(\lambda)}
			\sqrt{\widehat f^{\,*}(\omega)}
			e^{i\widehat\theta(x,t,\lambda)-i\widehat\theta(x,t,\omega)}
		}{
			\lambda-\omega
		},
	\end{equation}
	whose transpose
	$
	Q_+^{\mathsf T}(x,t):H_+\to H_-
	$
	is defined with respect to the bilinear pairing
	$
	\langle g,h\rangle
	:=
	\int g(\lambda)h(\lambda)\,d\nu(\lambda),
	$
	that is,
	\begin{equation}\label{H-S positive trans}
		(Q_+^{\mathsf T}(x,t)g)(\omega)
		=
		\int_{\mathcal D}
		Q_+(x,t;\lambda,\omega)g(\lambda)\,d\nu(\lambda),
		\qquad
		\omega\in\mathcal D^*.
	\end{equation}
	The kernel can be written in the form
	\begin{equation}\label{H-S IIKS}
		Q_+(x,t;\lambda,\omega)
		=
		\frac{\phi_+(\lambda)\psi_-(\omega)}
		{\lambda-\omega},
	\end{equation}
	with
	\begin{equation}\label{kernel-IIKS}
		\phi_+(\lambda)
		=
		2i\,z_+(\lambda)
		\sqrt{\widehat f(\lambda)}
		e^{i\widehat\theta(x,t,\lambda)},
		\qquad
		\psi_-(\omega)
		=
		\sqrt{\widehat f^{\,*}(\omega)}
		e^{-i\widehat\theta(x,t,\omega)}.
	\end{equation}
	There is no diagonal singularity, since \(\mathcal D\cap\mathcal D^*=\varnothing\).
	
	\begin{prop}
		\label{prop:Q-HS}
		Under Assumption~\ref{ass:tau-density}, the operator \(Q_+(x,t)\) is Hilbert--Schmidt for every real \(x,t\). Moreover, \(Q_+^{\mathsf T}(x,t)Q_+(x,t)\) is trace class on \(H_-\).
	\end{prop}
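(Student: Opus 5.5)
The plan is to verify directly that the kernel \(Q_+(x,t;\lambda,\omega)\) in \eqref{kernel of H-S} is square integrable on \(\mathcal D\times\mathcal D^*\) with respect to \(d\nu\otimes d\nu\). An integral operator between \(L^2\) spaces is Hilbert--Schmidt exactly when its kernel lies in \(L^2\) of the product space, with \(\|Q_+\|_{\mathfrak S_2}\) equal to the \(L^2\) norm of the kernel. The trace-class statement will then come from Proposition~\ref{prop:block-det}, which already records that \(Q^{\mathsf T}Q\in\mathfrak S_1\) whenever \(Q\in\mathfrak S_2\). Concretely, \(Q_+^{\mathsf T}\) is the integral operator with the transposed kernel. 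It has the same \(L^2\) norm, so it is also Hilbert--Schmidt, and the product of two Hilbert--Schmidt operators is trace class.

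For the \(L^2\) bound I would estimate each factor of the kernel separately, for fixed real \((x,t)\).

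\emph{The square-root densities.} By Assumption~\ref{ass:tau-density}, \(|\sqrt{\widehat f(\lambda)}|^2=|\widehat f(\lambda)|\) is bounded on \(\mathcal D\), and \(|\sqrt{\widehat f^{\,*}(\omega)}|^2\) is bounded on \(\mathcal D^*\).

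\emph{The factor \(z_+(\lambda)\).} It is bounded, since \(\mathcal D_+\) is a bounded domain.

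\emph{The exponentials.} Since \(\widehat\theta(x,t,\lambda)=x\lambda+2t\lambda^2\) is entire, the factors \(e^{\pm i\widehat\theta}\) are continuous on the compact closures \(\overline{\mathcal D}\) and \(\overline{\mathcal D^*}\). Their moduli \(e^{\mp\operatorname{Im}\widehat\theta}\) are therefore bounded above, by a constant depending on \((x,t)\) and on \(\sup_{\mathcal D}|\lambda|\).

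\emph{The Cauchy denominator.} Because \(\mathcal D\Subset\mathbb C^+\) and \(\mathcal D^*\Subset\mathbb C^-\), we have \(\delta:=\operatorname{dist}(\mathcal D,\mathcal D^*)>0\). Hence \(|\lambda-\omega|^{-1}\le\delta^{-1}\) on the whole product.

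Combining these, \(|Q_+(x,t;\lambda,\omega)|\le C(x,t)\) uniformly on \(\mathcal D\times\mathcal D^*\). Since \(\nu(\mathcal D)=\nu(\mathcal D^*)=|\mathcal D|/\pi<\infty\), this gives
\[
\|Q_+(x,t)\|_{\mathfrak S_2}^2\le C(x,t)^2\,\nu(\mathcal D)\,\nu(\mathcal D^*)<\infty .
\]
If one wanted to use only the \(L^2\) part of the assumption, the same bound could be rewritten as \(\|\widehat f\|_{L^1}\|\widehat f^{\,*}\|_{L^1}\), which is controlled by the \(L^2\) norms through Cauchy--Schwarz on the finite-measure domains.

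I do not expect a serious obstacle. The only point needing care is the non-singularity of the Cauchy denominator, and that is exactly where the separation \(\operatorname{dist}(\mathcal D,\mathcal D^*)>0\) enters. This separation comes from \(\operatorname{cl}(\mathcal D_+)\) being at positive distance from the coordinate axes, so that \(\pi(\mathcal D_+)\) stays at positive distance from \(\mathbb R\). A secondary check is that the exponential factors remain bounded. This holds only because the domains are bounded, and it is why the constant depends on \((x,t)\), growing with \(|x|\) and \(|t|\); the proposition claims nothing uniform in \((x,t)\), so this is harmless.
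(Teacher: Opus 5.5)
Your proposal is correct and follows essentially the same argument as the paper. Both bound the kernel pointwise using $\operatorname{dist}(\mathcal D,\mathcal D^*)>0$, the boundedness of $z_+$ on $\mathcal D$, and the boundedness of the exponentials on the compact domains for fixed real $(x,t)$. Both then integrate against $d\nu\otimes d\nu$ to obtain the Hilbert--Schmidt property, and deduce that $Q_+^{\mathsf T}Q_+$ is trace class because it is a product of two Hilbert--Schmidt operators.
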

	
	\begin{proof}
		Since \(\operatorname{dist}(\mathcal D,\mathcal D^*)>0\), there exists
		\(c_0>0\) such that
		\[
		|\lambda-\omega|\ge c_0,
		\qquad
		\lambda\in\mathcal D,\quad \omega\in\mathcal D^*.
		\]
		Moreover, \(z_+\) is bounded on the compact set \(\mathcal D\). Hence
		$
		C_0:=
		\frac{4\sup_{\lambda\in\mathcal D}|z_+(\lambda)|^2}{c_0^2}
		<\infty.
		$
		From the definition of the kernel, it is obtained that 
		\[
		|Q_+(x,t;\lambda,\omega)|^2
		\le
		C_0
		|\widehat f(\lambda)|
		|\widehat f^{\,*}(\omega)|
		\left|
		e^{i\widehat\theta(x,t,\lambda)-i\widehat\theta(x,t,\omega)}
		\right|^2.
		\]
		For fixed real \(x,t\), the exponential factor is bounded on the compact set \(\mathcal D\times\mathcal D^*\). Therefore, it follows that
		\[
		\|Q_+(x,t)\|_{\mathfrak S_2}^2
		=
		\int_{\mathcal D}
		\int_{\mathcal D^*}
		|Q_+(x,t;\lambda,\omega)|^2
		d\nu(\omega)d\nu(\lambda)
		<\infty.
		\]
		Thus \(Q_+(x,t)\) is Hilbert--Schmidt. The trace-class property of \(Q_+^{\mathsf T}Q_+\) follows because a product of Hilbert--Schmidt operators is trace class.
		
	\end{proof}
	
	\begin{remark}
		The factor \(2z_+(\lambda)\) is the genuine contribution of the DNLS equation. It comes from combining the two \(z\)-plane Cauchy factors associated with a pair of symmetric poles:
		\[
		\frac{1}{z-s}+\frac{1}{z+s}
		=
		\frac{2z}{z^2-s^2}
		=
		\frac{2z_+(\lambda)}{\lambda-\mu},
		\qquad
		\lambda=z^2,\quad \mu=s^2.
		\]
		Thus the quotient operator is not the standard operator of the NLS equation written in another variable; it retains the memory of the \(z\mapsto -z\) symmetry through the numerator \(2z_+(\lambda)\).
	\end{remark}
	
	Define the positive quotient \(\tau\)-function as
	\begin{equation}\label{tau-fun-posi}
		\tau^+(x,t)
		:=
		\det_{H_-}
		\left(\mathbb I+Q_+^{\mathsf T}(x,t)Q_+(x,t)\right).
	\end{equation}
	Equivalently, by Proposition~\ref{prop:block-det},
	$
	\tau^+(x,t)
	=
	\det_2(\mathbb I-\mathcal K_+(x,t)),
	$
	where
	\begin{equation}\label{K-operator}
		\mathcal K_+(x,t)
		=
		\begin{pmatrix}
			0 & -Q_+(x,t)\\
			Q_+^{\mathsf T}(x,t) & 0
		\end{pmatrix}.
	\end{equation}
	\begin{theorem}
		\label{thm:tau-solvability}
		Under Assumption~\ref{ass:tau-density}, for fixed real (x,t),
		$
		\tau^+(x,t)\ne0
		$
		if and only if the block operator
		\[
		\mathbb I-\mathcal K_+(x,t):H_+\oplus H_-\to H_+\oplus H_-
		\]
		is invertible. Hence \(\tau^+(x,t)\ne0\) is the Fredholm solvability condition for the positive quotient system.
	\end{theorem}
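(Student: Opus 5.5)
The statement follows by combining Proposition~\ref{prop:Q-HS} with Propositions~\ref{prop:block-det} and~\ref{prop:block-invertibility}, applied with \(H_+=L^2(\mathcal D,d\nu)\), \(H_-=L^2(\mathcal D^*,d\nu)\) and \(Q=Q_+(x,t)\). Fix real \((x,t)\).

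First, by Proposition~\ref{prop:Q-HS}, \(Q_+(x,t)\in\mathfrak S_2(H_-,H_+)\). I will check that the transpose defined in \eqref{H-S positive trans} is exactly the bilinear transpose used in Subsection~\ref{subsec:block-HS-determinant}. Its kernel \(Q_+(x,t;\lambda,\omega)\) is square integrable on \(\mathcal D\times\mathcal D^*\) with respect to \(d\nu\otimes d\nu\), with the same integral as for \(Q_+\), so \(Q_+^{\mathsf T}\in\mathfrak S_2(H_+,H_-)\). Hence \(\mathcal K_+(x,t)\) in \eqref{K-operator} is precisely the block operator \(\mathcal K_Q\) with \(Q=Q_+(x,t)\), and \(Q_+^{\mathsf T}Q_+\) is trace class. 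In particular \(\tau^+(x,t)\) in \eqref{tau-fun-posi} is a well-defined ordinary Fredholm determinant.

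Second, I apply Proposition~\ref{prop:block-invertibility}. It gives that \(\mathbb I-\mathcal K_+(x,t)\) is invertible on \(H_+\oplus H_-\) if and only if \(\mathbb I+Q_+^{\mathsf T}Q_+\) is invertible on \(H_-\). The standard fact that, for a trace-class operator \(T\), \(\det(\mathbb I+T)\neq0\) if and only if \(\mathbb I+T\) is invertible (cf.\ \cite{Simon}) then turns this into \(\tau^+(x,t)\neq0\). Proposition~\ref{prop:block-det} supplies the equivalent form \(\tau^+=\det_2(\mathbb I-\mathcal K_+)\), so the criterion can also be read as non-vanishing of the regularized determinant of the block operator.

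The only point needing care is the fact that a Fredholm determinant \(\det(\mathbb I+T)\) with \(T\) trace class vanishes exactly when \(-1\) is an eigenvalue of \(T\). The argument has two parts. For the easy direction, if \(\mathbb I+T\) is invertible, then \(\det(\mathbb I+T)\det\bigl((\mathbb I+T)^{-1}\bigr)=1\), since \((\mathbb I+T)^{-1}=\mathbb I-T(\mathbb I+T)^{-1}\) is again of the form identity plus trace class; hence \(\det(\mathbb I+T)\neq0\). For the converse, if \(\mathbb I+T\) is not invertible, the Fredholm alternative gives a nontrivial kernel, because \(T\) is compact. By Lidskii's theorem \(\det(\mathbb I+T)=\prod_k(1+\lambda_k(T))\), which contains the factor \(0\).

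Finally, the phrase ``Fredholm solvability condition for the positive quotient system'' is interpreted as the unique solvability of \((\mathbb I-\mathcal K_+)\Phi=G\) for all \(G\in H_+\oplus H_-\). This is just the invertibility already established, so no further argument is needed.
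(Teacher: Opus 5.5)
Your proposal is correct and follows the paper's proof: the trace-class property of $Q_+^{\mathsf T}Q_+$ comes from Proposition~\ref{prop:Q-HS}, the equivalence with invertibility of $\mathbb I-\mathcal K_+$ comes from the Schur-complement factorization of Proposition~\ref{prop:block-invertibility}, and the remaining step is the standard fact that, for trace-class $T$, $\det(\mathbb I+T)\neq0$ if and only if $\mathbb I+T$ is invertible. The paper takes two points for granted that you verify. You check that \eqref{H-S positive trans} really is the bilinear transpose, and you justify the standard determinant fact (multiplicativity in one direction, the Fredholm alternative together with the eigenvalue product formula in the other); both arguments are sound.
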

	
	\begin{proof}
		By Proposition~\ref{prop:Q-HS}, the operator \(Q_+^{\mathsf T}Q_+\) is trace class. Hence the Fredholm determinant defining \(\tau^+\) is well-defined, and it vanishes exactly when \(\mathbb I+Q_+^{\mathsf T}Q_+\) is not invertible.
		By Proposition~\ref{prop:block-invertibility}, this is equivalent to the invertibility of
		$
		\mathbb I-\mathcal K_+(x,t)
		$
		on \(H_+\oplus H_-\). The claim follows.
	\end{proof}
	
	We also need the conjugate determinant. Let
	$
	z_+^*(\omega)
	:=
	\overline{z_+(\overline\omega)},
	\ 
	\omega\in\mathcal D^*.
	$
	Define
	$
	Q_-(x,t):H_+\to H_-
	$
	by
	\begin{equation}\label{H-S negative}
		(Q_-(x,t)h)(\omega)
		=
		\int_{\mathcal D}
		Q_-(x,t;\omega,\lambda)h(\lambda)\,d\nu(\lambda),
	\end{equation}
	where
	\begin{equation}\label{kernel of negative H-S}
		Q_-(x,t;\omega,\lambda)
		=
		\frac{
			2i\,z_+^*(\omega)
			\sqrt{\widehat f^{\,*}(\omega)}
			\sqrt{\widehat f(\lambda)}
			e^{-i\widehat\theta(x,t,\omega)+i\widehat\theta(x,t,\lambda)}
		}{
			\omega-\lambda
		}.
	\end{equation}
	The conjugate \(\tau\)-function is
	\begin{equation}\label{tau-fun-nega}
		\tau^-(x,t)
		:=
		\det_{H_+}
		\left(\mathbb I+Q_-^{\mathsf T}(x,t)Q_-(x,t)\right).
	\end{equation}
	For real \(x,t\), the Schwarz symmetry gives
	$
	\tau^-(x,t)=\overline{\tau^+(x,t)}
	$
	whenever the square-root branches are chosen compatibly. We keep the two symbols \(\tau^+\) and \(\tau^-\) because they arise from the two conjugate residue systems and enter the reconstruction formula as a ratio.

	\subsection{Fredholm limit of the finite residue systems}
	\label{subsec:fredholm-solvability}
	
	We now connect the continuum determinant with the finite reflectionless problem. Let
	\[
	z_1,\ldots,z_N\in \mathcal D_+,
	\qquad
	\lambda_j:=z_j^2,
	\qquad
	z_+(\lambda_j)=z_j.
	\]
	Let \(c_j\) be the corresponding norming constants and set
	\[
	\alpha_j(x,t)
	:=
	c_j e^{2i\widehat\theta(x,t,\lambda_j)},
	\qquad
	\alpha_j^*(x,t)
	:=
	\overline{c_j}\,
	e^{-2i\widehat\theta(x,t,\overline{\lambda}_j)}.
	\]
	Define
	\begin{equation}\label{diag-A}
		A_N(x,t):=\operatorname{diag}(\alpha_1,\ldots,\alpha_N),
		\qquad
		A_N^*(x,t):=\operatorname{diag}(\alpha_1^*,\ldots,\alpha_N^*).
	\end{equation}
	Introduce the Cauchy-type matrix with entries
	\begin{equation}\label{Cauchy matrix}
		(\mathcal C_N)_{jk}
		:=
		\frac{2z_j}{\lambda_j-\overline{\lambda}_k},
		\qquad
		1\le j,k\le N.
	\end{equation}
	The factor \(2z_j\) is the finite-dimensional counterpart of the numerator \(2z_+(\lambda)\) in the quotient Fredholm kernel.
	\par
	Define
	\begin{equation}\label{discrete-tau-fun}
		\tau_N^+(x,t)
		:=
		\det_{\mathbb C^N}
		\left(
		\mathbb I_N
		-
		A_N^*(x,t)\mathcal C_N^{\mathsf T}
		A_N(x,t)\mathcal C_N
		\right).
	\end{equation}
	The conjugate determinant \(\tau_N^-(x,t)\) is obtained by applying Schwarz conjugation to the finite residue system.
	
	\begin{prop}
		\label{prop:finite-det}
		The determinant \(\tau_N^+(x,t)\) is the determinant of the finite algebraic system obtained from the pure \(N\)-soliton residue conditions. In particular, the solution 
		\(u_N(x,t)\) corresponding reflectionless potential of the DNLS equation \eqref{dnls3} satisfies
		\begin{equation}\label{discrete-construction-formula}
			|u_N(x,t)|^2
			=
			2i\,\partial_x
			\log
			\frac{\tau_N^+(x,t)}{\tau_N^-(x,t)}
		\end{equation}
		at every point where \(\tau_N^+(x,t)\tau_N^-(x,t)\ne0\).
	\end{prop}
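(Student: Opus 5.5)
We plan to prove the statement in three steps. First we rewrite the residue system \eqref{eq:finite-algebraic-system} as a linear system on the quotient points $\lambda_j=z_j^2$. Then we identify its determinant with $\tau_N^+$ by block elimination. Finally we extract $|u_N|^2$ from the value of $M^{(N)}$ at $z=0$, where the symmetry $z\mapsto-z$ makes the diagonal entries gauge phases.

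\emph{Step 1: reduction to quotient unknowns.} By the symmetry $M=\sigma_3M(-z)\sigma_3$, the poles at $z_j$ and $-z_j$ come with residues $(p_j,q_j)^{\mathsf T}$ and $(-p_j,q_j)^{\mathsf T}$. Pairing them with the elementary identities
\[
\frac{1}{z-s}+\frac{1}{z+s}=\frac{2z}{z^2-s^2},\qquad \frac{1}{z-s}-\frac{1}{z+s}=\frac{2s}{z^2-s^2},
\]
turns the right-hand sides of \eqref{eq:finite-algebraic-system} into sums over $k$ with kernels $2z_j/(\lambda_j-\bar\lambda_k)$ or $2\bar z_k/(\lambda_j-\bar\lambda_k)$. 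We will rescale the unknowns, for instance $\tilde q_j=q_j$ and $\tilde p_j=p_j/z_j$, so that only the Cauchy matrix $\mathcal C_N$ of \eqref{Cauchy matrix} and its transpose appear. The system then takes the block form
\[
\begin{pmatrix}\mathbb I_N & -A_N\mathcal C_N\\ A_N^*\mathcal C_N^{\mathsf T} & \mathbb I_N\end{pmatrix}\begin{pmatrix}Y\\ \bar X\end{pmatrix}=\text{(inhomogeneity)}.
\]
We expect the first and second components of \eqref{eq:finite-algebraic-system} to decouple into two such systems, one for the pair $(q,\bar p)$ and one for $(p,\bar q)$. The Schur-complement computation of Proposition~\ref{prop:block-det}, done here in finite dimensions, gives the determinant $\det(\mathbb I_N-A_N^*\mathcal C_N^{\mathsf T}A_N\mathcal C_N)=\tau_N^+$, up to signs fixed by the rescaling. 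The Schwarz-conjugate system gives $\tau_N^-$ in the same way. This proves the first assertion.

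\emph{Step 2: the value at $z=0$.} By evenness, $M^{(N)}(x,t,0)$ is diagonal. We will show that $\partial_x$ of its $(1,1)$ entry is proportional to $|u_N|^2$ times that entry. This follows from the $x$-part of the Lax pair \eqref{eq:lax3} at $z=0$: the $z^2\sigma_3$ term vanishes there, and for the Gerdjikov--Ivanov equation the remaining potential at $z=0$ is the diagonal term $\pm\tfrac{i}{4}|u|^2\sigma_3$. Hence
\[
M^{(N)}_{11}(x,t,0)=\exp\Bigl(c\int_{x}^{+\infty}|u_N|^2\,dy\Bigr)
\]
for an explicit constant $c$; we will fix $c$ by the normalization at $x\to+\infty$ and compare with the one-soliton formula. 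In particular $|u_N|^2=\mp c^{-1}\partial_x\log M^{(N)}_{11}(x,t,0)$, and it remains to show $M^{(N)}_{11}(x,t,0)=\tau_N^-/\tau_N^+$, up to the square root coming from the $\sigma_3$ pairing.

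\emph{Step 3: Cramer's rule.} Setting $z=0$ in \eqref{eq:soliton-ansatz} gives
\[
M^{(N)}_{11}(x,t,0)=1-2\sum_j \frac{p_j}{z_j}.
\]
This is a bordered linear functional of the unknowns from Step 1. The standard bordered-determinant identity $1-b^{\mathsf T}(\mathbb I-K)^{-1}a=\det(\mathbb I-K+ab^{\mathsf T})/\det(\mathbb I-K)$ then writes it as a ratio of determinants.

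The main obstacle will be showing that the numerator $\det(\mathbb I-K+ab^{\mathsf T})$ equals $\tau_N^-$, or $\overline{\tau_N^+}$. This is the Cauchy-matrix rank-one update identity: $\lambda_j-\bar\lambda_k$ becomes $(\lambda_j-\bar\lambda_k)\,\lambda_j/\bar\lambda_k$ after conjugating by $\operatorname{diag}(\lambda_j)$, which swaps the roles of the factors $2z_j$ and $2\bar z_k$. I would try this first, using the displacement equation $\Lambda\mathcal C_N-\mathcal C_N\bar\Lambda=2\,\mathrm{diag}(z)\,\mathbf 1\mathbf 1^{\mathsf T}$ to generate the rank-one term.

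Combining this identity with Step 2 yields $|u_N|^2=2i\,\partial_x\log(\tau_N^+/\tau_N^-)$ wherever $\tau_N^\pm\neq0$. The constant $2i$ will be fixed by comparison with the $N=1$ soliton.
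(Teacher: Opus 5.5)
Your route is sound, but it is not the paper's.

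\textbf{What the paper does.} It never evaluates $M^{(N)}$ at $z=0$. With $P=(p_j)$ and $R=(\bar q_j)$, it reads the first components of \eqref{eq:finite-algebraic-system} as $P=-A_N\mathcal C_NR$ and $R=A_N^*(\mathbf 1-\mathcal C_N^{\mathsf T}P)$. This gives $B_+R=A_N^*\mathbf 1$ with $B_+=\mathbb I_N-A_N^*\mathcal C_N^{\mathsf T}A_N\mathcal C_N$, which is your Step 1, and no rescaling is needed. The paper then:
\begin{enumerate}
\item differentiates $\log\tau_N^+=\log\det B_+$ by Jacobi's formula;
\item uses $\bar\Lambda\mathcal C_N^{\mathsf T}-\mathcal C_N^{\mathsf T}\Lambda=-2\mathbf 1Z^{\mathsf T}$, with $Z=(z_j)$, to make $\partial_xB_+$ rank one, obtaining $\partial_x\log\tau_N^+=4i\sum_jz_jp_j$ and the conjugate formula;
\item converts $\sum_j(z_jp_j+\bar z_j\bar p_j)=\frac12\operatorname{Tr}M_2^{(N)}$ into $-\frac18|u_N|^2$ using $\det M^{(N)}\equiv1$.
\end{enumerate}
This needs only the residue system and unimodularity.

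\textbf{What your route gives.} Your argument uses the same displacement structure, but it proves the stronger, undifferentiated identity $M^{(N)}_{11}(x,t,0)=\tau_N^-/\tau_N^+$. That identity shows the determinant ratio is the gauge factor $G_{11}$ of Appendix~\ref{app:gauge-equivalence} up to a constant phase, so the Kaup--Newell and Chen--Lee--Liu fields also come out in determinant form. The price is that you need $M^{(N)}e^{-i\theta\sigma_3}$ to solve \eqref{eq:lax3} with potential $u_N$, and you should state this. It is the usual dressing argument: $\Psi_x\Psi^{-1}$ is polynomial in $z$; the $z\mapsto-z$ symmetry removes its off-diagonal $z^0$ term; and its diagonal $z^0$ term is $-2i(M^{(N)}_1)_{12}(M^{(N)}_1)_{21}\sigma_3=\frac i2|u_N|^2\sigma_3$.

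\textbf{Corrections needed.}
\begin{enumerate}
\item The Gerdjikov--Ivanov coefficient at $z=0$ is $\frac i2|u|^2\sigma_3$, not $\pm\frac i4$; the latter belongs to Chen--Lee--Liu. Hence $\partial_x\log M^{(N)}_{11}(x,t,0)=\frac i2|u_N|^2$.
\item This constant comes from the Lax pair. It is not fixed by the normalization at $x\to+\infty$, which only fixes an integration constant (and the local relation suffices anyway). It is not fixed by an $N=1$ comparison either.
\item There is no square root, and your main obstacle closes in one line, as follows.
\end{enumerate}

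\textbf{Closing the obstacle.} Put $\widetilde{\mathcal C}_{jk}=2/(\lambda_j-\bar\lambda_k)$. Then $\mathcal C_N=\operatorname{diag}(z_j)\widetilde{\mathcal C}$ and $B_+=\mathbb I_N-A_N^*\widetilde{\mathcal C}^{\mathsf T}\Lambda A_N\widetilde{\mathcal C}$. Also $M^{(N)}_{11}(x,t,0)=1-2\sum_jp_j/z_j=1+2\mathbf 1^{\mathsf T}A_N\widetilde{\mathcal C}B_+^{-1}A_N^*\mathbf 1$. Use two facts:
\begin{itemize}
\item the determinant lemma $\det(X+ab^{\mathsf T})=\det X\,(1+b^{\mathsf T}X^{-1}a)$ (your version has the wrong sign);
\item your displacement equation with $\operatorname{diag}(z_j)$ removed and transposed, $\widetilde{\mathcal C}^{\mathsf T}\Lambda-\bar\Lambda\widetilde{\mathcal C}^{\mathsf T}=2\mathbf 1\mathbf 1^{\mathsf T}$.
\end{itemize}
Together they give
\[
\tau_N^+M^{(N)}_{11}(x,t,0)=\det\bigl(B_++2A_N^*\mathbf 1\mathbf 1^{\mathsf T}A_N\widetilde{\mathcal C}\bigr)=\det\bigl(\mathbb I_N-A_N^*\bar\Lambda\widetilde{\mathcal C}^{\mathsf T}A_N\widetilde{\mathcal C}\bigr).
\]
By Sylvester's identity, the right-hand side is the determinant of the conjugate $(q,\bar p)$ system, i.e.\ $\tau_N^-$. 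For $N=1$ this reads $M^{(1)}_{11}(x,t,0)=(1+\rho\bar\lambda_1)/(1+\rho\lambda_1)$ with $\rho=|\alpha_1|^2/(\operatorname{Im}\lambda_1)^2$.

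With these corrections your proof is complete.
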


	\begin{proof} 
		See Appendix \ref{proof-prop-7} for the detailed proof of this proposition.
	\end{proof}

	\subsection{From finite residue determinants to the continuum density}
	\label{subsec:finite-determinants-density}
	
	We next formulate the convergence statement needed to pass from pure \(N\)-soliton determinants to the continuum \(\tau\)-function.

	\begin{prop}\label{prop:verification-HS-convergence}
		Suppose that there exist $0<\beta\leq 1$ and a function $g\in C^\beta(\mathrm{cl}(\mathcal{D}_+))$
		such that
		\[
		g(z)^2=f(z,\overline z),
		\qquad
		z\in\mathrm{cl}(\mathcal{D}_+).
		\]
		On the Schwarz conjugate domain, choose the compatible branch
		$
		g^\sharp(z):=\overline{g(z)}.
		$
		Define the finite matrix
		\begin{equation}
			\mathbf Q_{+,N}(x,t)
			:=
			iA_N(x,t)^{1/2}C_N
			\bigl(A_N^*(x,t)\bigr)^{1/2},
		\end{equation}
		where the square roots are chosen according to
		\[
		\sqrt{\alpha_{j,N}(x,t)}
		=
		\left(\frac{\mathcal A}{\pi N}\right)^{1/2}
		g(z_{j,N})
		e^{i\widehat\theta(x,t,\lambda_{j,N})},
		\quad
		\sqrt{\alpha_{j,N}^*(x,t)}
		=
		\left(\frac{\mathcal A}{\pi N}\right)^{1/2}
		g^\sharp(z_{j,N})
		e^{-i\widehat\theta
			\left(x,t,\overline{\lambda_{j,N}}\right)}.
		\]
		Then there exist isometric embeddings
		\[
		\mathcal I_{+,N}:\mathbb C^N\longrightarrow H_+,
		\qquad
		\mathcal I_{-,N}:\mathbb C^N\longrightarrow H_-,
		\]
		such that, with
		\[
		Q_{+,N}(x,t)
		:=
		\mathcal I_{+,N}\mathbf Q_{+,N}(x,t)
		\mathcal I_{-,N}^{\dagger},
		\]
		one has, for every compact set $K\Subset\mathbb R^2$,
		\[
		\sup_{(x,t)\in K}
		\left\|
		Q_{+,N}(x,t)-Q_+(x,t)
		\right\|_{\mathfrak S_2}
		\longrightarrow 0,
		\quad
		\sup_{(x,t)\in K}
		\left\|
		\partial_xQ_{+,N}(x,t)
		-
		\partial_xQ_+(x,t)
		\right\|_{\mathfrak S_2}
		\longrightarrow 0.
		\]
		The analogous conclusions hold for the conjugate operators
		$Q_{-,N}\to Q_-$.
	\end{prop}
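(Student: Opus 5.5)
The plan is to realize the finite matrices as operators with piecewise-constant kernels on a partition of $\mathcal D$ adapted to the empirical measure, and then compare kernels in $L^2(\mathcal D\times\mathcal D^*,d\nu\otimes d\nu)$, which is exactly the $\mathfrak S_2$ norm.

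\textbf{Cells and embeddings.} For each $N$, use the weak convergence in Assumption~\ref{ass:spectral-condensation} to choose a partition of $\mathcal D_+$ into measurable cells $\{\Omega_{j,N}\}_{j=1}^N$ with $z_{j,N}$ assigned to $\Omega_{j,N}$. Require $|\Omega_{j,N}|=\mathcal A/N$ and $\max_j \operatorname{diam}\Omega_{j,N}\to0$. This is a transport/quantization step: it needs a strengthening of weak convergence, for instance that the empirical measures are realized by equal-area cells of vanishing diameter. I would state this explicitly as the interpretation of the condensation hypothesis.

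Push the cells forward by $\pi(z)=z^2$ to obtain cells $\widehat\Omega_{j,N}\subset\mathcal D$, and let $\widehat\Omega_{j,N}^*$ be their conjugates in $\mathcal D^*$. Define $\mathcal I_{\pm,N}e_j$ to be the normalized indicator of $\widehat\Omega_{j,N}$, respectively $\widehat\Omega^*_{j,N}$, in $L^2(d\nu)$. These are orthonormal systems, so $\mathcal I_{\pm,N}$ are isometries. The kernel of $Q_{+,N}$ is then the step function
\[
\sum_{j,k}\frac{(\mathbf Q_{+,N})_{jk}\,\chi_{\widehat\Omega_{j,N}}(\lambda)\,\chi_{\widehat\Omega^*_{k,N}}(\omega)}{\nu(\widehat\Omega_{j,N})^{1/2}\,\nu(\widehat\Omega^*_{k,N})^{1/2}}.
\]

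\textbf{Kernel comparison.} Substitute the prescribed square roots:
\[
(\mathbf Q_{+,N})_{jk}=i\,\frac{\mathcal A}{\pi N}\,g(z_j)\,\frac{2z_j}{\lambda_j-\bar\lambda_k}\,\overline{g(z_k)}\,e^{i\widehat\theta(\lambda_j)-i\widehat\theta(\bar\lambda_k)}.
\]
Since $\nu(\widehat\Omega_{j,N})\approx\frac{1}{\pi}4|z_j|^2\frac{\mathcal A}{N}$, the prefactor $\frac{\mathcal A}{\pi N}/\nu^{1/2}\nu^{1/2}$ equals $1/(4|z_j||z_k|)$ up to $o(1)$, uniformly in $j,k$. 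This matches the Jacobian in $\sqrt{\widehat f(\lambda)}=g(z_+(\lambda))/(2|z_+(\lambda)|)$, up to a unimodular phase. That phase is absorbed by the branch conventions of Assumption~\ref{ass:tau-density}; changing branches conjugates by a diagonal unitary multiplication and does not affect the Hilbert--Schmidt convergence once the branches are matched.

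Hence the step kernel equals $Q_+(x,t;\lambda_j,\bar\lambda_k)$ evaluated at cell representatives, up to a uniformly small relative error. The difference from $Q_+(x,t;\lambda,\omega)$ on $\widehat\Omega_{j,N}\times\widehat\Omega^*_{k,N}$ is controlled by the modulus of continuity of the kernel on the compact set $\operatorname{cl}\mathcal D\times\operatorname{cl}\mathcal D^*$. The kernel is uniformly continuous there because $g\in C^\beta$, $z_+$ is smooth, $|\lambda-\omega|\ge c_0$ (as in Proposition~\ref{prop:Q-HS}), and $\widehat\theta$ is smooth and uniformly bounded for $(x,t)\in K$. The kernel error is therefore $O(\delta_N^\beta)+o(1)$ uniformly, where $\delta_N=\max_j\operatorname{diam}\Omega_{j,N}$. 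Integrating over the bounded set $\mathcal D\times\mathcal D^*$ gives the $\mathfrak S_2$ bound uniformly on $K$.

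\textbf{Derivative and conjugate cases.} For $\partial_x$, note that $\partial_x$ acts on both kernels only through the factor $i(\lambda-\omega)$ from $\widehat\theta$. It replaces $e^{i\widehat\theta(\lambda)-i\widehat\theta(\omega)}$ by $i(\lambda-\omega)$ times itself, which cancels the Cauchy denominator and leaves an even smoother, bounded continuous kernel. The same modulus-of-continuity argument applies verbatim. The conjugate operators $Q_{-,N}\to Q_-$ follow by Schwarz conjugation, swapping the roles of $\mathcal I_{+,N}$ and $\mathcal I_{-,N}$.

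\textbf{Main obstacle.} The hard step is the first one: producing equal-area cells of vanishing diameter, each containing its own $z_{j,N}$, from mere weak convergence. Weak convergence alone does not guarantee this. I would first try a quantitative optimal-transport argument in the Wasserstein-$\infty$ distance. If that fails, I would add to the assumption that the points are quasi-uniform (each lies in a cell of a regular mesh), which is the natural hypothesis for this Riemann-sum convergence.
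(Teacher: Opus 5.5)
\textbf{There is a genuine gap, and it is the step you flag yourself.} Your kernel comparison is a uniform (sup-norm) comparison. You need $\max_j\operatorname{diam}\Omega_{j,N}\to0$ for two things: to make the step kernel uniformly close to $Q_+$ on each product cell, and to make $\nu(\widehat\Omega_{j,N})$ uniformly close to $4|z_{j,N}|^2\mathcal A/(\pi N)$ in the normalization of your indicator embeddings. You do not derive equal-mass cells of vanishing diameter from Assumption~\ref{ass:spectral-condensation}. Your fallback is to add a quasi-uniformity hypothesis, which changes the statement. As written, the proposal does not prove the proposition under its hypotheses.

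\textbf{The missing observation is that uniform control is unnecessary.} Since $\|\cdot\|_{\mathfrak S_2}$ is the $L^2$ norm of the kernel, mean-square control of the displacement suffices.
\begin{itemize}
\item Because $\mu_N$ and $\mu$ live on the fixed compact set $\mathrm{cl}(\mathcal D_+)$, weak convergence already gives $W_2(\mu_N,\mu)\to0$.
\item Because $\mu$ is absolutely continuous, Brenier's theorem gives a Borel map $T_N$ with $(T_N)_\#\mu=\mu_N$ and $\varepsilon_N^2:=\int|T_N(z)-z|^2\,d\mu(z)=W_2(\mu_N,\mu)^2$.
\item The cells $E_{j,N}=T_N^{-1}(\{z_{j,N}\})$ have $\mu$-mass exactly $1/N$. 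They may have large diameter, and this is harmless.
\end{itemize}
This is how the paper proceeds.

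The paper also removes your approximate Jacobian factor by building it into the embeddings. Take the unitaries $(U_+h)(z)=2(\mathcal A/\pi)^{1/2}|z|\,h(z^2)$ and $(U_-h)(z)=2(\mathcal A/\pi)^{1/2}|z|\,h(\bar z^{2})$ from $H_\pm$ onto $L^2(\mathcal D_+,d\mu)$. Set $\mathcal I_{\pm,N}=U_\pm^{-1}\mathcal J_N$, where $\mathcal J_Ne_j=\sqrt N\chi_{E_{j,N}}$. Then $(\mathbf Q_{+,N})_{jk}=\frac1N\widetilde q_{x,t}(z_{j,N},z_{k,N})$ for the pulled-back kernel
$\widetilde q_{x,t}(z,s)=\frac{2i\mathcal A}{\pi}\,z\,g(z)g^\sharp(s)e^{i\widehat\theta(x,t,z^2)-i\widehat\theta(x,t,\bar s^{2})}/(z^2-\bar s^{2})$.
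Hence the kernel of $U_+Q_{+,N}U_-^{-1}$ is exactly $\widetilde q_{x,t}(T_N(z),T_N(s))$, with no approximation in the Jacobian.

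The $C^\beta$ bound on $\widetilde q_{x,t}$, uniform for $(x,t)\in K$, followed by Jensen's inequality, gives $\sup_K\|Q_{+,N}-Q_+\|_{\mathfrak S_2}^2\le C_K\int|T_N(z)-z|^{2\beta}\,d\mu\le C_K\varepsilon_N^{2\beta}$. The same bound holds for $\partial_x$, where, as you correctly observe, the Cauchy denominator cancels. Your derivative and conjugate-case reasoning is otherwise fine.

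Incidentally, you are too pessimistic about the $W_\infty$ route in this setting. With the points in $\mathrm{cl}(\mathcal D_+)$, $\mathcal D_+$ connected and $\mu$ uniform, a Strassen-type argument does give $W_\infty(\mu_N,\mu)\to0$. Completing that route would still need an extra step to turn the coupling into a partition, and it is not needed.
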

	
	\begin{proof}
		See Appendix \ref{proof-prop-7} for the detailed proof of this proposition.
	\end{proof}
	
	\begin{theorem}
		\label{thm:tau-limit}
		Under Assumptions~\ref{ass:tau-density}, the finite-dimensional determinants converge to the continuum determinants:
		\[
		\tau_N^+(x,t)\longrightarrow \tau^+(x,t),
		\qquad
		\tau_N^-(x,t)\longrightarrow \tau^-(x,t),
		\]
		locally uniformly for \((x,t)\in\mathbb R^2\). Furthermore, on every compact set on which
		$
		\tau^+(x,t)\tau^-(x,t)\ne0,
		$
		one has
		\[
		\partial_x
		\log
		\frac{\tau_N^+(x,t)}{\tau_N^-(x,t)}
		\longrightarrow
		\partial_x
		\log
		\frac{\tau^+(x,t)}{\tau^-(x,t)}
		\]
		locally uniformly.
	\end{theorem}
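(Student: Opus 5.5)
The convergence will be obtained by reducing each finite determinant to a Fredholm determinant of a trace-class operator on $H_-$ and then applying continuity of the determinant in the trace norm. The operators $Q_{+,N}$ from Proposition~\ref{prop:verification-HS-convergence} converge to $Q_+$ in $\mathfrak S_2$, and this is what drives the argument. The first step is to rewrite $\tau_N^+$ in the block form of Proposition~\ref{prop:block-det}. I would use Sylvester's identity $\det(\mathbb I-XY)=\det(\mathbb I-YX)$ and conjugate by the diagonal matrices $A_N^{1/2},(A_N^*)^{1/2}$, with the square-root branches of Proposition~\ref{prop:verification-HS-convergence}. This should give
\[
\tau_N^+(x,t)=\det_{\mathbb C^N}\bigl(\mathbb I_N+\mathbf Q_{+,N}^{\mathsf T}\mathbf Q_{+,N}\bigr),
\]
where the sign change $-\to+$ comes from the factor $i^2=-1$ in $\mathbf Q_{+,N}=iA_N^{1/2}\mathcal C_N(A_N^*)^{1/2}$. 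Some care with the transpose is needed at this point. If the bilinear transpose of $\mathbf Q_{+,N}$ does not literally reproduce $(A_N^*)^{1/2}\mathcal C_N^{\mathsf T}A_N^{1/2}$, I would absorb the discrepancy into the diagonal similarity, which does not change the determinant.

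The second step transports everything to $H_\pm$. The embeddings $\mathcal I_{\pm,N}$ are isometric, so $\mathcal I_{-,N}^\dagger\mathcal I_{-,N}=\mathbb I_{\mathbb C^N}$. Since the transpose is taken with respect to the bilinear pairing, I would also check $\mathcal I_{\pm,N}^{\mathsf T}=\mathcal I_{\pm,N}^\dagger$ up to complex conjugation. This holds if the embeddings are built from indicator functions of cells, normalized by $(\pi N/\mathcal A)^{1/2}$ times area factors, which makes them real. Granting this, $Q_{+,N}^{\mathsf T}Q_{+,N}=\mathcal I_{-,N}\mathbf Q_{+,N}^{\mathsf T}\mathbf Q_{+,N}\mathcal I_{-,N}^{\dagger}$, a finite-rank operator whose nonzero spectrum, with multiplicity, coincides with that of $\mathbf Q_{+,N}^{\mathsf T}\mathbf Q_{+,N}$. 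Therefore $\tau_N^+=\det_{H_-}(\mathbb I+Q_{+,N}^{\mathsf T}Q_{+,N})$.

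The third step is the convergence itself. As in the proof of Proposition~\ref{prop:block-det},
\[
\|Q_{+,N}^{\mathsf T}Q_{+,N}-Q_+^{\mathsf T}Q_+\|_{\mathfrak S_1}\le \|Q_{+,N}-Q_+\|_{\mathfrak S_2}\bigl(\|Q_{+,N}\|_{\mathfrak S_2}+\|Q_+\|_{\mathfrak S_2}\bigr).
\]
The right side tends to zero uniformly on compact $K$, because the $\mathfrak S_2$ norms are uniformly bounded there by the estimate in Proposition~\ref{prop:Q-HS}. The bound $|\det(\mathbb I+A)-\det(\mathbb I+B)|\le\|A-B\|_{\mathfrak S_1}e^{1+\|A\|_1+\|B\|_1}$ then gives $\tau_N^+\to\tau^+$ uniformly on $K$. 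The argument for $\tau_N^-$ is identical, using $Q_{-,N}\to Q_-$.

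For the logarithmic derivatives, assume $\tau^+\tau^-\neq0$ on $K$. Uniform convergence implies $|\tau_N^\pm|\ge c>0$ on $K$ for large $N$. I would then use the Jacobi formula
\[
\partial_x\log\tau_N^+=\operatorname{Tr}\bigl[(\mathbb I+Q_{+,N}^{\mathsf T}Q_{+,N})^{-1}\partial_x(Q_{+,N}^{\mathsf T}Q_{+,N})\bigr].
\]
Here $\partial_x(Q^{\mathsf T}Q)=(\partial_xQ)^{\mathsf T}Q+Q^{\mathsf T}\partial_xQ$, which converges in $\mathfrak S_1$ by the second conclusion of Proposition~\ref{prop:verification-HS-convergence}. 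The resolvents converge in operator norm: $\mathbb I+Q_+^{\mathsf T}Q_+$ is invertible on $K$ by Theorem~\ref{thm:tau-solvability}, its inverse is uniformly bounded by continuity and compactness, and a Neumann-series perturbation handles the approximants. Since trace is continuous in $\mathfrak S_1$, the derivatives converge uniformly, and taking the difference of the $+$ and $-$ terms gives the claim.

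I expect the main obstacle to be the first two steps. The signs, the transposes taken with respect to the bilinear rather than the Hermitian pairing, and the compatibility of $\mathcal I_{\pm,N}$ with the bilinear transpose must all match exactly for $\tau_N^+$ to equal $\det(\mathbb I+Q_{+,N}^{\mathsf T}Q_{+,N})$. If this identification fails, I would fall back to comparing $\det(\mathbb I_N-A_N^*\mathcal C_N^{\mathsf T}A_N\mathcal C_N)$ directly with $\det_2(\mathbb I-\mathcal K_{Q_{+,N}})$, using the Schur complement computation of Proposition~\ref{prop:block-det}.
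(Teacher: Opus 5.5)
Your proposal is correct and follows the paper's proof. Both arguments rest on the same three ingredients: trace-norm convergence of $Q_{+,N}^{\mathsf T}Q_{+,N}$ and of its $x$-derivative, obtained from the $\mathfrak S_2$ convergence in Proposition~\ref{prop:verification-HS-convergence}; continuity of the Fredholm determinant; and the Jacobi formula combined with resolvent convergence on compact sets where $\tau^+\tau^-\ne0$. Your Steps 1--2 identify $\tau_N^+$ with $\det_{H_-}(\mathbb I+Q_{+,N}^{\mathsf T}Q_{+,N})$, using the factor $i^2=-1$, Sylvester's identity, and the realness of $\mathcal I_{\pm,N}=U_\pm^{-1}\mathcal J_N$ (so that the bilinear transpose coincides with the Hermitian adjoint); the paper uses this identification without comment, and your verification of it is sound.
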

	
	\begin{proof}
		Proposition~\ref{prop:verification-HS-convergence} readily implies that
		\[
		Q_{+,N}^{\mathsf T}Q_{+,N}
		\longrightarrow
		Q_+^{\mathsf T}Q_+,
		\qquad
		\partial_x\bigl(Q_{+,N}^{\mathsf T}Q_{+,N}\bigr)
		\longrightarrow
		\partial_x\bigl(Q_+^{\mathsf T}Q_+\bigr)
		\]
		in trace norm, locally uniformly in $(x,t)$. The same statements hold for the conjugate operators. The Fredholm determinant is continuous with respect to the trace norm, hence
		\[
		\det(\mathbb I+Q_{+,N}^{\mathsf T}Q_{+,N})
		\to
		\det(\mathbb I+Q_+^{\mathsf T}Q_+).
		\]
		This gives \(\tau_N^+\to\tau^+\). The proof for \(\tau_N^-\to\tau^-\) is identical.
		
		The convergence of logarithmic derivatives follows from the trace-norm convergence of both the operators and their \(x\)-derivatives. Let \(T(x)\) be a trace-class operator differentiable in trace norm. Then 
		\[
		\partial_x\log\det(\mathbb I+T(x))
		=
		\operatorname{Tr}\left((\mathbb I+T(x))^{-1}\partial_xT(x)\right),
		\]
		whenever \(\mathbb I+T(x)\) is invertible. On compact sets where \(\tau^+\tau^-\ne0\), the inverses of the limiting operators are bounded. The trace-norm convergence and the resolvent identity imply convergence of the traces, and therefore convergence of the logarithmic derivatives.
	\end{proof}
	
	We are now ready to state the determinant representation of the continuum soliton gas.
	
	Let
	$
	\Omega_\tau
	:=
	\{(x,t):\tau^+(x,t)\tau^-(x,t)\ne0\}.
	$
	We prove the identity locally on \(\Omega_\tau\). Let \(K\Subset\Omega_\tau\) be compact. Since \(\tau^+\tau^-\) is continuous and nonzero on \(K\), there exists \(\delta_K>0\) such that
	\[
	|\tau^+(x,t)\tau^-(x,t)|\ge \delta_K,
	\qquad
	(x,t)\in K.
	\]
	By Theorem~\ref{thm:tau-limit}, we have
	$
	\tau_N^\pm\to\tau^\pm
	$
	locally uniformly. Hence, for all sufficiently large \(N\), we have
	\[
	\tau_N^+(x,t)\tau_N^-(x,t)\ne0,
	\qquad
	(x,t)\in K.
	\]
	Therefore Proposition~\ref{prop:finite-det} applies on \(K\), and gives
	\[
	|u_N(x,t)|^2
	=
	2i\,\partial_x
	\log
	\frac{\tau_N^+(x,t)}{\tau_N^-(x,t)},
	\qquad
	(x,t)\in K.
	\]
	
	We now pass to the limit on both sides. By the Theorem \ref{thm:finite-to-dbar},
	$
	u_N\to u
	$
	uniformly on \(K\). Consequently,
	$
	|u_N|^2\to |u|^2
	$
	uniformly on \(K\). Indeed,
	\[
	\sup_K\left||u_N|^2-|u|^2\right|
	\le
	\sup_K|u_N-u|
	\left(
	\sup_K|u_N|+\sup_K|u|
	\right),
	\]
	and the right-hand side tends to zero because \(u_N\to u\) uniformly on \(K\).
	
	On the other hand, Theorem~\ref{thm:tau-limit} gives the locally uniform convergence of logarithmic derivatives on \(K\):
	\[
	\partial_x
	\log
	\frac{\tau_N^+(x,t)}{\tau_N^-(x,t)}
	\longrightarrow
	\partial_x
	\log
	\frac{\tau^+(x,t)}{\tau^-(x,t)}.
	\]
	Taking the limit in the finite identity therefore yields
	\begin{equation}\label{continuum-reconstruction}
		|u(x,t)|^2
		=
		2i\,\partial_x
		\log
		\frac{\tau^+(x,t)}{\tau^-(x,t)},
		\qquad
		(x,t)\in K.
	\end{equation}
	Since \(K\Subset\Omega_\tau\) was arbitrary, the identity holds throughout \(\Omega_\tau\).
	
	\begin{remark}
		For real \(x,t\), the Schwarz symmetry gives \(\tau^-=\overline{\tau^+}\) under compatible choices of branches. Hence
		\[
		\frac{\tau^+}{\tau^-}
		=
		\frac{\tau^+}{\overline{\tau^+}}
		\]
		has modulus one wherever \(\tau^+\ne0\). Thus \(|u|^2\) is encoded in the \(x\)-variation of the phase of the determinant for the DNLS equation. This is different from determinant formulae in which the physical density is recovered from the logarithmic derivative of the modulus of a real positive determinant.
	\end{remark}
	
	\begin{remark}
		The determinant representation is not needed for the nonlinear steepest descent analysis in the previous sections. Its role is structural: it shows that the continuum soliton gas is not merely a formal large-\(N\) object, but is governed by a Fredholm determinant obtained as the limit of finite reflectionless residue determinants. The factor \(2z_+(\lambda)\) in the quotient kernel records the symmetry \(z\mapsto -z\), while the determinant ratio records the density \(|u|^2\) of the equation \eqref{dnls3}.
	\end{remark}

	\appendix
	\section{Gauge equivalence among the three DNLS equations}
	\label{app:gauge-equivalence}
	
	The purpose of this appendix is to record the gauge equivalence among the
	three DNLS equations.  The main consequence used in this paper is that the Kaup--Newell and Chen--Lee--Liu fields can be recovered from the Gerdjikov--Ivanov field by nonlocal phase transformations.
	
	\begin{prop}\label{prop:gauge-equivalence}
		Let $u(x,t)$ be a solution of the
		Gerdjikov--Ivanov equation \eqref{dnls3}.  Let $\Phi_u(x,t,z)$ be a
		fundamental solution of the Lax pair \eqref{eq:lax3}
		associated with the Gerdjikov--Ivanov equation, normalized so that
		$
		G(x,t):=\Phi_u(x,t,0)
		$
		is diagonal.  Write
		\[
		G(x,t)=G_{11}(x,t)^{\sigma_3}
		=
		\begin{pmatrix}
			G_{11}(x,t) & 0\\
			0 & G_{11}(x,t)^{-1}
		\end{pmatrix}.
		\]
		Then
		$
		G_{11}(x,t)
		=
		\exp\left\{
		\frac{i}{2}
		\int_{-\infty}^{x}|u(y,t)|^2\,dy
		\right\}.
		$
		Define
		\begin{equation}
			\label{app:gauge-relations}
			q(x,t):=u(x,t)G_{11}(x,t)^{-2}, 
			\quad
			Q(x,t):=u(x,t)G_{11}(x,t)^{-1}. 
		\end{equation}
		Then $q(x,t)$ solves the Kaup--Newell equation \eqref{dnls1}, and $Q(x,t)$
		solves the Chen--Lee--Liu equation \eqref{dnls2}.  Moreover,
		\[
		|q(x,t)|^2=|Q(x,t)|^2=|u(x,t)|^2.
		\]
		At the level of eigenfunctions, the corresponding gauge transformations \cite{Kitaev-Vartanian-1999} are
		\[
		\Phi_q(x,t,z):=G(x,t)^{-1}\Phi_u(x,t,z),
		\qquad
		\Phi_Q(x,t,z):=G(x,t)^{-1/2}\Phi_u(x,t,z),
		\]
		which transform the Lax pair of the Gerdjikov--Ivanov equation into the Lax pairs of the
		Kaup--Newell and Chen--Lee--Liu equations, respectively.
	\end{prop}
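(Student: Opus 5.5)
The plan is to verify the statement directly at the level of Lax pairs. I would begin with the Gerdjikov--Ivanov $x$-part $\partial_x\Phi_u=U_u\Phi_u$ from \eqref{eq:lax3}, which has the form
\[
U_u=-iz^2\sigma_3+zU_1(u)+U_0(u),\qquad U_1=\begin{pmatrix}0&u\\-\bar u&0\end{pmatrix},\qquad U_0=-\tfrac{i}{2}|u|^2\sigma_3 ,
\]
with the signs taken as in \eqref{eq:lax3}. The $t$-part has the analogous structure, a polynomial in $z$ of degree four whose $z^0$ coefficient is diagonal. Setting $z=0$ kills every nondiagonal term. Hence $G=\Phi_u(x,t,0)$ satisfies the diagonal linear system
\[
\partial_xG=U_u|_{z=0}\,G,\qquad \partial_tG=V_u|_{z=0}\,G .
\]
With the normalization at $x\to-\infty$ inherited from the Jost solution, $G\to\mathbb I$, and so $G_{11}=\exp\{\tfrac{i}{2}\int_{-\infty}^x|u|^2dy\}$. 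The $t$-equation is then consistent with this formula by the local conservation law for $|u|^2$. The conservation law follows from \eqref{dnls3}: $\partial_t|u|^2$ is an exact $x$-derivative of a current vanishing at $-\infty$. I would check that this current coincides with the $z^0$ diagonal entry of $V_u$.

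Next I would compute the gauge-transformed potentials. Write $\Phi_q=G^{-1}\Phi_u$. Then
\[
\partial_x\Phi_q=\bigl(G^{-1}U_uG-G^{-1}\partial_xG\bigr)\Phi_q .
\]
The subtracted term cancels exactly the $z^0$ diagonal part $U_0$. Conjugating the off-diagonal entries by the diagonal matrix $G$ multiplies the $(1,2)$-entry by $G_{11}^{-2}$ and the $(2,1)$-entry by $G_{11}^{2}$. The result is $-iz^2\sigma_3+z\begin{pmatrix}0&q\\-\bar q&0\end{pmatrix}$ with $q=uG_{11}^{-2}$, which is precisely the Kaup--Newell $x$-part. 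For $\Phi_Q=G^{-1/2}\Phi_u$, only half of $U_0$ is removed. The remaining diagonal term is $-\tfrac{i}{4}|Q|^2\sigma_3$ (up to the convention in \eqref{eq:lax3}), and the off-diagonal entries become $Q=uG_{11}^{-1}$, which gives the Chen--Lee--Liu $x$-part. The same conjugation is applied to the $t$-parts. Since $|G_{11}|=1$, the moduli are equal: $|q|=|Q|=|u|$. This also confirms that the phases agree with the introduction's formulas, because $G_{11}^{-2}=\exp(-i\int|u|^2)$ up to the sign convention in the stated direction of the transform.

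Finally, the PDEs follow from compatibility. The transformed pair $(\tilde U,\tilde V)$ with $\tilde U=G^{-1}U_uG-G^{-1}G_x$ (and similarly for $\tilde V$) satisfies the zero-curvature equation automatically, because it is a gauge transform of a flat connection. The Kaup--Newell zero-curvature equation is equivalent to \eqref{dnls1} for $q$, and the Chen--Lee--Liu one is equivalent to \eqref{dnls2} for $Q$. \emph{The main obstacle} is the $t$-part. One must check that $G^{-1}V_uG-G^{-1}G_t$ has exactly the Kaup--Newell (respectively Chen--Lee--Liu) form in terms of $q$ and $Q$. The $z^2$ and $z^1$ coefficients pick up phase factors and derivative terms, such as $u_x$ versus $q_x=(u_x-i|u|^2u)G_{11}^{-2}$, and these must reorganize correctly. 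I would handle this by expanding order by order in $z$, using $q_x$ and $Q_x$ expressed through $u_x$ and $|u|^2$, and matching coefficients. If the bookkeeping becomes heavy, a fallback is to substitute $q=u\exp(-i\int|u|^2)$ directly into \eqref{dnls1} and use \eqref{dnls3} together with the conservation law for $|u|^2$.
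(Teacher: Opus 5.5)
Your proposal is correct and follows essentially the same route as the paper. Both evaluate the Lax pair at $z=0$ to get the diagonal system for $G$, then conjugate by $G^{-1}$ and $G^{-1/2}$, handle the $t$-part through the identity $G^{-1}(P_u)_xG=(P_q)_x+i|q|^2\sigma_3P_q$ (this is your formula $q_x=(u_x-i|u|^2u)G_{11}^{-2}$ in matrix form), and read off the Kaup--Newell and Chen--Lee--Liu Lax pairs. Two small remarks: in the paper's convention the $z^0$ term of $X_u$ is $+\tfrac{i}{2}|u|^2\sigma_3$, so your displayed sign is flipped, although your formula for $G_{11}$ is the right one; and your conservation-law check that the $t$-equation for $G$ is compatible with $G_{11}(-\infty,t)=1$ is a detail the paper leaves implicit, but it does hold, since $\tfrac{i}{2}$ times the current $\tfrac12|u|^4-i(u\bar u_x-\bar u u_x)$ is exactly the $z^0$ diagonal entry of $T_u$.
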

	
	\begin{proof}
		For a complex-valued function $f$, set
		\[
		P_f=
		\begin{pmatrix}
			0 & f\\
			-\bar f & 0
		\end{pmatrix},
		\qquad
		\sigma_3=
		\begin{pmatrix}
			1 & 0\\
			0 & -1
		\end{pmatrix}.
		\]
		The function $u=u(x,t)$ solves the Gerdjikov--Ivanov equation \eqref{dnls3} if and only if there exists a matrix-valued function $\Phi_u=\Phi_u(x,t,z)$, depending on the spectral parameter $z\in\mathbb{C}$, which satisfies the linear system
		\begin{equation}
			\label{eq:lax3}
			\partial_x\Phi_u=X_u\Phi_u,
			\qquad
			\partial_t\Phi_u=T_u\Phi_u,
		\end{equation}
		where
		\[
		X_u
		=
		-iz^2\sigma_3
		+zP_u
		+\frac{i}{2}|u|^2\sigma_3,
		\]
		and
		\[
		T_u
		=
		-2iz^4\sigma_3
		+2z^3P_u
		+iz^2|u|^2\sigma_3
		-iz(P_u)_x\sigma_3
		+
		\left(
		\frac{i}{4}|u|^4
		+\frac{1}{2}(u\bar u_x-\bar u u_x)
		\right)\sigma_3.
		\]
		Evaluating \eqref{eq:lax3} at $z=0$ gives
		\[
		(G_{11})_x
		=
		\frac{i}{2}|u|^2G_{11}.
		\]
		With the normalization $G_{11}(-\infty,t)=1$, we obtain
		\[
		G_{11}(x,t)
		=
		\exp\left\{
		\frac{i}{2}
		\int_{-\infty}^{x}|u(y,t)|^2\,dy
		\right\}.
		\]
		In particular, $|G_{11}(x,t)|=1$, and therefore the transformations
		\eqref{app:gauge-relations} preserve the density:
		\[
		|q(x,t)|=|Q(x,t)|=|u(x,t)|.
		\]
		
		We first derive the Lax pair.  Let
		\[
		g(x,t):=G_{11}(x,t),
		\qquad
		\Phi_q:=G^{-1}\Phi_u,
		\qquad
		q:=ug^{-2}.
		\]
		Then $G^{-1}P_uG=P_q.$
		Since $G^{-1}G_x=\frac{i}{2}|u|^2\sigma_3,$
		we have
		\[
		\partial_x\Phi_q
		=
		\left(
		-G^{-1}G_x+G^{-1}X_uG
		\right)\Phi_q
		=
		\left(
		-iz^2\sigma_3+zP_q
		\right)\Phi_q.
		\]
		For the $t$-part, it can be checked that
		\[
		G^{-1}(P_u)_xG
		=
		(P_q)_x+i|q|^2\sigma_3P_q.
		\]
		Substituting this identity into
		\[
		\partial_t\Phi_q
		=
		\left(
		-G^{-1}G_t+G^{-1}T_uG
		\right)\Phi_q
		\]
		gives
		\begin{subequations}
			\label{eq:lax1}
			\begin{align}
				\partial_x\Phi_q
				&=
				\left[
				-iz^2\sigma_3+zP_q
				\right]\Phi_q,
				\\
				\partial_t\Phi_q
				&=
				\left[
				-2iz^4\sigma_3
				+2z^3P_q
				+iz^2|q|^2\sigma_3
				-z|q|^2P_q
				-iz(P_q)_x\sigma_3
				\right]\Phi_q.
			\end{align}
		\end{subequations}
		This is the Lax pair of the Kaup--Newell equation, whose zero-curvature condition is just the equation
		\eqref{dnls1}.
		
		We next derive the Lax pair of the Chen--Lee--Liu equation.  Let
		\[
		H:=G^{1/2},
		\qquad
		\Phi_Q:=H^{-1}\Phi_u,
		\qquad
		Q:=ug^{-1}.
		\]
		Then
		\[
		H^{-1}P_uH=P_Q,
		\qquad
		H^{-1}H_x=\frac{i}{4}|Q|^2\sigma_3,
		\quad
		H^{-1}(P_u)_xH
		=
		(P_Q)_x+\frac{i}{2}|Q|^2\sigma_3P_Q.
		\]
		Moreover, since $u=Qg$, we have
		\[
		u\bar u_x-\bar u u_x
		=
		Q\bar Q_x-\bar Q Q_x-i|Q|^4.
		\]
		Using these identities in
		\[
		\partial_x\Phi_Q
		=
		\left(
		-H^{-1}H_x+H^{-1}X_uH
		\right)\Phi_Q,
		\quad
		\partial_t\Phi_Q
		=
		\left(
		-H^{-1}H_t+H^{-1}T_uH
		\right)\Phi_Q,
		\]
		it is derived that
		\begin{subequations}
			\label{eq:lax2}
			\begin{align}
				\partial_x\Phi_Q
				&=
				\left[
				-iz^2\sigma_3
				+zP_Q
				+\frac{i}{4}|Q|^2\sigma_3
				\right]\Phi_Q,
				\\
				\partial_t\Phi_Q
				&=
				\Bigg[
				-2iz^4\sigma_3
				+2z^3P_Q
				+iz^2|Q|^2\sigma_3
				-iz(P_Q)_x\sigma_3
				-\frac{1}{2}z|Q|^2P_Q
				\notag\\
				&\hspace{2.8cm}
				-\frac{i}{8}|Q|^4\sigma_3
				+\frac{1}{4}(Q\bar Q_x-\bar Q Q_x)\sigma_3
				\Bigg]\Phi_Q,
			\end{align}
		\end{subequations}
		which is just the Lax pair of the Chen--Lee--Liu equation
		\eqref{dnls2}.
	\end{proof}

	\section{Proof of Proposition \ref{prop:finite-det} and Proposition \ref{prop:verification-HS-convergence}} \label{proof-prop-7}
	
	\begin{proof}[\textbf{Proof of Proposition \ref{prop:finite-det}}]
		Let
		\[
		P:=(p_1,\ldots,p_N)^{\mathsf T},
		\qquad
		R:=(\overline q_1,\ldots,\overline q_N)^{\mathsf T},
		\qquad
		\mathbf 1:=(1,\ldots,1)^{\mathsf T},
		\]
		where \(p_j\) and \(q_j\), \(j=1,\ldots,N\), are those defined in Proposition~\ref{prop:pure-N-soliton}. 
		The factor \(2z_j\) in \eqref{Cauchy matrix} is the finite-dimensional trace of the symmetry \(z\mapsto -z\) of the DNLS equation, since
		\[
		\frac{1}{z_j-\overline z_k}
		+
		\frac{1}{z_j+\overline z_k}
		=
		\frac{2z_j}{z_j^2-\overline z_k^{,2}}
		=
		\frac{2z_j}{\lambda_j-\overline\lambda_k}.
		\]
		According to the finite algebraic system \eqref{eq:finite-algebraic-system}, we obtain
		\begin{equation}\label{P-R}
			P=-A_NKR,\quad
			R=A^*_N(\mathbf 1-K^{\mathsf T}P),
		\end{equation}
		where $A_N$ and $A^*_N$ are given in \eqref{diag-A}. Moreover,
		$
		R=A_N^*\mathbf 1+A_N^*K^{\mathsf T}A_NKR,
		$
		or equivalently
		\begin{equation}\label{matrix B_+}
			B_+R=A^*_N\mathbf 1,
			\qquad
			B_+:=\mathbb I_N-A^*_NK^{\mathsf T}A_NK.
		\end{equation}
		Thus
		$
		\tau_N^+(x,t)=\det B_+
		$
		is precisely the determinant of the finite algebraic system associated with the residue conditions.
		
		We next compute the \(x\)-derivative of \(\tau_N^+\). At points where \(\tau_N^+\ne0\), Jacobi's formula gives
		$
		\partial_x\log\tau_N^+
		=
		\operatorname{Tr}(B_+^{-1}\partial_xB_+).
		$
		Let
		$
		\Lambda:=\operatorname{diag}(\lambda_1,\ldots,\lambda_N),
		\ 
		\overline\Lambda:=\operatorname{diag}(\overline\lambda_1,\ldots,\overline\lambda_N).
		$
		Since
		$
		\partial_xA_N=2i\Lambda A_N,
		\ 
		\partial_xA^*_N=-2i\overline\Lambda A^*_N,
		$
		we have
		\[
		\partial_xB_+
		=
		2iA^*_N(\overline\Lambda K^{\mathsf T}-K^{\mathsf T}\Lambda)A_NK.
		\]
		For the matrix in parentheses,
		$
		\overline\Lambda K^{\mathsf T}-K^{\mathsf T}\Lambda
		=
		-2\mathbf 1\,Z^{\mathsf T}
		$
		with $Z:=(z_1,\ldots,z_N)^{\mathsf T}$.
		It follows that
		\[
		\partial_xB_+
		=
		-4iA^*_N\mathbf 1\,Z^{\mathsf T}A_NK.
		\]
		Therefore
		$
		\partial_x\log\tau_N^+
		=
		-4i\operatorname{Tr}
		\left(
		B_+^{-1}A^*_N\mathbf 1\,Z^{\mathsf T}A_NK
		\right).
		$
		Using cyclicity of the trace for this rank-one product gives
		\[
		\partial_x\log\tau_N^+
		=
		-4i\,Z^{\mathsf T}A_NKB_+^{-1}A^*_N\mathbf 1=
		-4i\,Z^{\mathsf T}A_NKR.
		\]
		Using \(P=-A_NKR\), we obtain
		$
		\partial_x\log\tau_N^+
		=
		4i\,Z^{\mathsf T}P
		=
		4i\sum_{j=1}^Nz_jp_j.
		$
		The Schwarz conjugate residue system gives, in the same way,
		$
		\partial_x\log\tau_N^-
		=
		-4i\sum_{j=1}^N\overline z_j\,\overline p_j.
		$
		Consequently,
		\begin{equation}\label{derivative-x-log}
			\partial_x\log\frac{\tau_N^+}{\tau_N^-}
			=
			4i\sum_{j=1}^N
			\left(
			z_jp_j+\overline z_j\,\overline p_j
			\right).
		\end{equation}
		
		It remains to identify this expression with \(|u_N|^2\). Expanding the reflectionless solution \(M^{(N)}(z)\) at infinity, one has
		\[
		M^{(N)}(z)
		=
		\mathbb I+\frac{M^{(N)}_1}{z}
		+\frac{M^{(N)}_2}{z^2}
		+O(z^{-3}),
		\qquad z\to\infty.
		\]
		From the rational ansatz for \(M^{(N)}(z)\), the coefficient of \(z^{-1}\) is
		$
		M^{(N)}_1
		=
		\begin{pmatrix}
			0 & -2\sum_{j=1}^N\overline q_j\\[2mm]
			2\sum_{j=1}^N q_j & 0
		\end{pmatrix}.
		$
		Thus the reconstruction formula gives
		$
		u_N(x,t)
		=
		2i(M^{(N)}_1)_{12}
		=
		-4i\sum_{j=1}^N\overline q_j.
		$
		
		The diagonal part of the \(z^{-2}\)-coefficients are
		$
		(M^{(N)}_2)_{11}=2\sum_{j=1}^Nz_jp_j$,
		and
		$(M^{(N)}_2)_{22}=2\sum_{j=1}^N\overline z_j\,\overline p_j.
		$
		Therefore, it follows that
		\[
		\operatorname{Tr}M^{(N)}_2
		=
		2\sum_{j=1}^N
		\left(
		z_jp_j+\overline z_j\,\overline p_j
		\right).
		\]
		Since $
		\det M_N(z)\equiv1,
		$
		one yields that
		$
		\operatorname{Tr}M^{(N)}_2+\det M^{(N)}_1=0.
		$
		Therefore
		$
		|u_N(x,t)|^2
		=
		-8\sum_{j=1}^N
		\left(
		z_jp_j+\overline z_j\,\overline p_j
		\right).
		$
		Combining this with \eqref{derivative-x-log}
		gives
		\[
		2i\,\partial_x\log\frac{\tau_N^+}{\tau_N^-}
		=
		-8\sum_{j=1}^N
		\left(
		z_jp_j+\overline z_j\,\overline p_j
		\right)
		=
		|u_N(x,t)|^2.
		\]
		This proves the identity.
	\end{proof}

	\begin{proof}[\textbf{Proof of Proposition \ref{prop:verification-HS-convergence}}]
		Since all the measures are supported on the fixed compact set
		$\overline{D_+}$, the weak convergence
		$\mu_N\rightharpoonup\mu$ implies
		\[
		W_2(\mu_N,\mu)\longrightarrow 0,
		\]
		where $W_2$ denotes the quadratic Wasserstein distance.
		Moreover, $\mu$ is absolutely continuous with respect to planar
		Lebesgue measure. Hence, for every $N$, there exists a Borel
		transport map
		\[
		T_N:\mathcal{D}_+\longrightarrow
		\{z_{1,N},\ldots,z_{N,N}\}
		\]
		such that $(T_N)_\#\mu=\mu_N$
		and
		\[
		\varepsilon_N^2
		:=
		\int_{D_+}|T_N(z)-z|^2\,d\mu(z)
		=
		W_2(\mu_N,\mu)^2
		\longrightarrow 0.
		\]
		Set $E_{j,N}
		:=
		T_N^{-1}\bigl(\{z_{j,N}\}\bigr)$,
		then
		\[
		\mu(E_{j,N})=\frac{1}{N},
		\qquad
		j=1,\ldots,N.
		\]
		Consequently, the functions
		\[
		e_{j,N}(z)
		:=
		\sqrt{N}\,\chi_{E_{j,N}}(z),
		\qquad
		j=1,\ldots,N,
		\]
		form an orthonormal system in $\mathcal H:=L^2(\mathcal D_+,d\mu)$.
		Define the isometry
		\[
		\mathcal J_N:\mathbb C^N\longrightarrow\mathcal H,
		\qquad
		\mathcal J_N(a_1,\ldots,a_N)
		:=
		\sum_{j=1}^Na_je_{j,N}.
		\]
		We next pull the quotient operators back to the original
		$z$-plane. Define
		\[
		U_+:H_+\longrightarrow\mathcal H,
		\qquad
		(U_+h)(z)
		:=
		2\left(\frac{\mathcal A}{\pi}\right)^{1/2}
		|z|\,h(z^2),
		\]
		and
		\[
		U_-:H_-\longrightarrow\mathcal H,
		\qquad
		(U_-h)(z)
		:=
		2\left(\frac{\mathcal A}{\pi}\right)^{1/2}
		|z|\,h(\overline z^{\,2}).
		\]
		The change of variables $\lambda=z^2$ gives $dA_\lambda=4|z|^2\,dA_z$,
		and therefore $U_+$ and $U_-$ are unitary.
		Let
		\[
		\widetilde Q_+(x,t)
		:=
		U_+Q_+(x,t)U_-^{-1}.
		\]
		A direct change of variables in the kernel formula for $Q_+$
		shows that $\widetilde Q_+$ is the integral operator on
		$\mathcal H$ with kernel
		\[
		\widetilde q_{x,t}(z,s)
		=
		\frac{2i\mathcal A}{\pi}
		\frac{
			z\,g(z)g^\sharp(s)
			e^{i\widehat\theta(x,t,z^2)
				-i\widehat\theta(x,t,\overline s^{\,2})}
		}{
			z^2-\overline s^{\,2}
		},
		\qquad
		z,s\in \mathcal{D}_+.
		\]
		Indeed,
		\[
		\bigl(\widetilde Q_+(x,t)h\bigr)(z)
		=
		\int_{\mathcal{D}_+}
		\widetilde q_{x,t}(z,s)h(s)\,d\mu(s).
		\]
		The entries of matrix $\mathbf Q_{+,N}$ are
		\[
		\begin{aligned}
			\bigl(\mathbf Q_{+,N}(x,t)\bigr)_{jk}
			&=
			\frac{
				2iz_{j,N}
				\sqrt{\alpha_{j,N}(x,t)}
				\sqrt{\alpha_{k,N}^*(x,t)}
			}{
				\lambda_{j,N}-\overline{\lambda_{k,N}}
			}
			\\
			&=
			\frac{1}{N}
			\widetilde q_{x,t}(z_{j,N},z_{k,N}).
		\end{aligned}
		\]
		Define
		\[
		\widetilde Q_{+,N}(x,t)
		:=
		\mathcal J_N\mathbf Q_{+,N}(x,t)\mathcal J_N^\dagger.
		\]
		Then $\widetilde Q_{+,N}$ is the integral operator on
		$\mathcal H$ whose kernel is
		\[
		\widetilde q^{(N)}_{x,t}(z,s)
		=
		\widetilde q_{x,t}\bigl(T_N(z),T_N(s)\bigr).
		\]
		Finally, set
		\[
		\mathcal I_{+,N}:=U_+^{-1}\mathcal J_N,
		\qquad
		\mathcal I_{-,N}:=U_-^{-1}\mathcal J_N.
		\]
		These maps are isometric, and
		\[
		Q_{+,N}
		=
		U_+^{-1}\widetilde Q_{+,N}U_-
		=
		\mathcal I_{+,N}\mathbf Q_{+,N}
		\mathcal I_{-,N}^\dagger.
		\]
		
		Since $\operatorname{dist}(\mathcal{D},\mathcal{D}^*)>0$,
		there exists $c_0>0$ such that
		\[
		|z^2-\overline s^{\,2}|
		\geq c_0,
		\qquad
		z,s\in\mathrm{cl}(\mathcal{D}_+).
		\]
		It follows from $g\in C^\beta(\mathrm{cl}(\mathcal{D}_+))$ that, for every
		compact set $K\Subset\mathbb R^2$, there exists $C_K>0$ such that
		\[
		\begin{aligned}
			\sup_{(x,t)\in K}
			\left|
			\widetilde q_{x,t}(z,s)
			-
			\widetilde q_{x,t}(z',s')
			\right|
			\leq
			C_K
			\left(
			|z-z'|^\beta+|s-s'|^\beta
			\right)
		\end{aligned}
		\]
		for all $z,z',s,s'\in\overline{\mathcal{D}_+}$.
		
		The Hilbert--Schmidt norm of an integral operator equals the
		$L^2$ norm of its kernel. Hence
		\[
		\begin{aligned}
			&
			\left\|
			\widetilde Q_{+,N}(x,t)
			-
			\widetilde Q_+(x,t)
			\right\|_{\mathfrak S_2}^2
			\\
			&\qquad=
			\int_{\mathcal{D}_+}\int_{\mathcal{D}_+}
			\left|
			\widetilde q_{x,t}
			\bigl(T_N(z),T_N(s)\bigr)
			-
			\widetilde q_{x,t}(z,s)
			\right|^2
			\,d\mu(s)\,d\mu(z).
		\end{aligned}
		\]
		Using the preceding Hölder estimate and Jensen's inequality, we
		obtain
		\[
		\begin{aligned}
			\sup_{(x,t)\in K}
			\left\|
			\widetilde Q_{+,N}(x,t)
			-
			\widetilde Q_+(x,t)
			\right\|_{\mathfrak S_2}^2
			&\leq
			C_K
			\int_{D_+}
			|T_N(z)-z|^{2\beta}\,d\mu(z)
			\\
			&\leq
			C_K\varepsilon_N^{2\beta}.
		\end{aligned}
		\]
		Differentiating the pulled-back kernel with respect to $x$ gives
		\[
		\partial_x\widetilde q_{x,t}(z,s)
		=
		-\frac{2\mathcal A}{\pi}
		z\,g(z)g^\sharp(s)
		e^{i\widehat\theta(x,t,z^2)
			-i\widehat\theta(x,t,\overline s^{\,2})}.
		\]
		In particular, the Cauchy denominator disappears. The family
		$\{\partial_x\widetilde q_{x,t}:(x,t)\in K\}$ satisfies the same
		uniform $C^\beta$ estimate as
		$\{\widetilde q_{x,t}:(x,t)\in K\}$. Repeating the preceding
		argument yields
		\[
		\sup_{(x,t)\in K}
		\left\|
		\partial_x\widetilde Q_{+,N}(x,t)
		-
		\partial_x\widetilde Q_+(x,t)
		\right\|_{\mathfrak S_2}
		\leq
		C_K\varepsilon_N^\beta.
		\]
		
		Since the Hilbert--Schmidt norm is invariant under unitary
		conjugation, we have
		\[
		\begin{aligned}
			&
			\left\|
			Q_{+,N}-Q_+
			\right\|_{\mathfrak S_2}
			+
			\left\|
			\partial_xQ_{+,N}-\partial_xQ_+
			\right\|_{\mathfrak S_2}
			\\
			&\qquad=
			\left\|
			\widetilde Q_{+,N}-\widetilde Q_+
			\right\|_{\mathfrak S_2}
			+
			\left\|
			\partial_x\widetilde Q_{+,N}
			-
			\partial_x\widetilde Q_+
			\right\|_{\mathfrak S_2}.
		\end{aligned}
		\]
		The asserted estimates now follow from
		$\varepsilon_N=W_2(\mu_N,\mu)\to0$.
		
		The proof for $Q_{-,N}\to Q_-$ is identical after applying
		Schwarz conjugation and using the compatible square-root
		branches.
	\end{proof}

	\paragraph{Data availability:}  The manuscript has no associated data.
	\paragraph{Declarations} 
	\paragraph{Conflict of interest:} 
	The authors declare that they have no conflict of interest.

	\paragraph{Acknowledgments:}
	This work is supported by the National Natural Science Foundation of China, Grant No. 12371247 and No. 12431008, Beijing Natural Science Foundation Grant No. 1262012 and No. JQ26004 and Key Project of the Natural Science Foundation of Inner Mongolia Autonomous Region Grant No. 2026ZD036.

\end{document}